\newtheorem{thm}{Theorem}[section]
\newtheorem{lem}[thm]{Lemma}
\theoremstyle{definition}
\newtheorem{rem}[thm]{Remark}
\newcommand{\ra}{\rightarrow}
\newcommand{\bk}{\backslash}
\newcommand{\mc}{\mathcal}
\newcommand{\mb}{\mathbb}
\newcommand{\sg}{\sigma}
\newcommand{\R}{\mb{R}}
\newcommand{\N}{\mb{N}}
\newcommand{\llf}{\left\lfloor}
\newcommand{\e}{\epsilon}
\newcommand{\rrf}{\right\rfloor}
\newcommand{\mbf}{\boldsymbol}
\begin{document}
\title[Restricted Prime Factors I]{On the Distribution of Integers with Restricted Prime Factors I}
\normalsize
\author[A. P. Mangerel]{Alexander P. Mangerel}
\address{Department of Mathematics\\ University of Toronto\\
Toronto, Ontario, Canada}
\email{sacha.mangerel@mail.utoronto.ca}
\begin{abstract}
Let $E_0,\ldots,E_n$ be a partition of the set of prime numbers, and define $E_j(x) := \sum_{p \in E_j \atop p \leq x} \frac{1}{p}$. Define $\pi(x;\mbf{E},\mbf{k})$ to be the number of integers $n \leq x$ with $k_j$ prime factors in $E_j$ for each $j$. Basic probabilistic heuristics suggest that $x^{-1}\pi(x;\mbf{E},\mbf{k})$, modelled as the distribution function of a random variable, should satisfy a joint Poisson law with parameter vector $(E_0(x),\ldots,E_n(x))$, as $x \ra \infty$. We prove an asymptotic formula for $\pi(x;\mbf{E},\mbf{k})$ which contradicts these heuristics in the case that for each $j$, $E_j(x)^2 \leq k_j \leq \log^{\frac{2}{3}-\e} x$ for each $j$ under mild hypotheses. As a particular application, we prove an asymptotic formula regarding integers with prime factors from specific arithmetic progressions, which generalizes a result due to Delange.
\end{abstract}
\maketitle
\section{Introduction}
The prime number theorem, which provides asymptotic information on the distribution of all primes up to a specific bound, is a central result in number theory. A natural generalization of the notion of a prime number is that of an almost prime number, in which the number of its prime factors is constrained, and the question of determining the distribution of almost primes up to a specific bound is also natural.  Let $\pi_k(x) := |\left\{n \leq x : \omega(n) = k\right\}|$, where $\omega(n)$ is the number of distinct prime factors of an integer $n$, and $k \in \mb{N}$. As a corollary of the prime number theorem, Landau \cite{HaW} proved that, asymptotically, $ \pi_k(x) = (1+o(1))\frac{x}{\log x} \frac{(\log_2 x)^{k-1}}{(k-1)!}$, where $\log_2 x := \log(\log x)$ for $x > 1$, though the error term in this formula is only effective uniformly for small values of the parameter $k$. A number of papers appeared subsequently in which upper and lower estimates for these quantities, uniform over larger intervals, were established, notably by Hardy and Ramanujan \cite{HR}, who showed that there exist fixed constants $C_1, C_2 > 0$ such that for every $k$ we have 
\begin{equation} \label{HarRam}
\pi_k(x) \leq C_1 \frac{x}{\log x} \frac{(\log_2 x + C_2)^{k-1}}{(k-1)!},
\end{equation}
the latter result established by elementary means. Subsequently, Sathe was able to find a uniform asymptotic, akin to Landau's result, over a substantially larger range for the parameter $k$, which was improved and simplified shortly thereafter using analytic methods by Selberg. He showed that when $y := \frac{k}{\log_2 x} \ll 1$,
\begin{equation} \label{SelSat}
\pi_k(x) = \frac{1}{\Gamma(y+1)}\prod_{p}\left(1+\frac{y}{p-1}\right)\left(1-\frac{1}{p}\right)^y\frac{x}{\log x}\frac{\log_2^{k-1} x}{(k-1)!}\left(1+O\left(\frac{1}{\log_2 x}\right)\right);
\end{equation}
in other words, the range of uniformity of these results in $k$ is $1 \leq k \leq C \log_2 x$, where $C > 0$ is any constant.  It is well-known that the number of prime factors of an integer $n \leq x$ cannot exceed $(1+o(1))\frac{\log x}{\log_2 x}$ (see I.5 in \cite{Ten2}); hence, it is of interest to determine an asymptotic formula uniform for all $1 \leq k \ll \frac{\log x}{\log_2 x}$.  The intent to increase the above range of uniformity for the parameter $k$ was initiated by Hensley \cite{He} and Pomerance \cite{Pom}, and the widest range of parameters determined to date (and the sharpest results as well) are essentially due to Hildebrand and Tenenbaum \cite{HiT}, who utilized an analytic argument based on the saddle-point method (which we describe in what follows) to establish, essentially, the existence of constants $\rho$ and $\sg$ such that for $1 \leq k \ll \frac{\log x}{\log_2^2x}$,
\begin{equation} \label{HilTen}
\pi_k(x) = \frac{\rho^{-k}x^{\sg}F(\rho;\sg)}{(\log x)kw(k)w(\rho)}\left(1+O\left(\frac{1}{\log(\log x/(k\log(k+1)))}\right)\right),
\end{equation}
where, given $z,s \in \mb{C}$, $F(z;s) := \prod_{p} \left(1+\frac{z}{p^s-1}\right)$ and $w(t) := \Gamma(t)t^{-t}e^t \sim (2\pi t)^{\frac{1}{2}}$, by Stirling's approximation. Using mostly the same argument, Kerner was able to extend their result to the entire range $1\leq k \ll (1-\e)\frac{\log x}{\log_2 x}$ for any fixed $\e>0$ in his thesis \cite{Ker} .  \\
The above results depend crucially on the existence of the asymptotically smooth distribution function for the primes given by the prime number theorem. For instance, to prove \eqref{SelSat}, Selberg employed the function $F(z;s)\zeta(s)^{-z}$, which is holomorphic in any zero-free region to the left of the vertical line $s = 1$, except for an (essential) singularity at $s = 1$. In particular, the fact that $G(z;s) := F(z;s)(s-1)^z$ is holomorphic at $s = 1$ is crucial (see \cite{Sel}), and it is due to the prime number theorem that there is a direct connection between $\zeta(s)$ having residue 1 at $s = 1$ and, for instance, the asymptotics of sums such as $\sum_{p \leq x} \frac{\log p}{p}$. It will be evident from what follows that specifically the estimate $\sum_{p \leq x} \frac{1}{p} = \log_2 x + O(1)$ is of essence in these problems (and the smooth function $\log_2 x$ is a consequence of $\pi(x)$ being asymptotically smooth). \\
It is natural, then, to consider whether these results have analogues in cases of the following type. Let $E \subset \mc{P}$ and let $\omega_E(n)$ denote the number of distinct prime factors of $n$ that belong to the set $E$. Of course, when the function $\pi_E(x) := |\left\{p \leq x : p \in E\right\}|$ is similarly smooth like $\pi(x)$ (for instance, when the set $E$ has Dirichlet density, in which case $\sum_{p \leq x \atop p \in E} \frac{1}{p} = (1+o(1))\lambda \sum_{p\leq x} \frac{1}{p}$, where $\lambda \geq 0$ is a constant), these problems are more tractable.  Delange considered some such applications of (a generalized form of) Selberg's method in \cite{D}.  On the other hand, if we do not assume any hypotheses regarding the regularity of distribution of $E$, the arguments of the papers thus far mentioned are not applicable. \\
The most important results in this direction to date are due to Hal\'{a}sz \cite{Ha} (strictly, speaking, Hal\'{a}sz' theorem follows from an analysis of \emph{completely} additive functions, a set to which $\omega_E(n)$ does not belong; simple modifications to his arguments were made in a paper by Norton \cite{N} in order to apply it to additive functions in general). Let $E(x) := \sum_{p \in E \atop p\leq x} \frac{1}{p}$ and set $\pi(x;E;k) := |\left\{n \leq x : \omega_E(n) = k\right\}|$. Hal\'{a}sz proved that when $\delta > 0$ is fixed and $\delta E(x) \leq k \leq (2-\delta)E(x)$, and provided that $E(x) \ra \infty$ as $x \ra \infty$, 
\begin{equation} \label{HalEst}
\pi(x;E,k) \asymp_{\delta} x \frac{E(x)^k}{k!}e^{-E(x)};
\end{equation}
moreover, in the case that $k = (1+o(1))E(x)$, he proved, in fact, the asymptotic formula 
\begin{equation} \label{HalAsymp}
\pi(x;E,k) = x\frac{E(x)^k}{k!}e^{-E(x)}\left(1+O_{\delta}\left(\frac{|k-E(x)|}{E(x)} + \frac{1}{E(x)^{\frac{1}{2}}}\right)\right).
\end{equation}
In analogy to \eqref{SelSat}, and a fortiori given the lack of asymptotic estimates in general for $1 \leq k \leq CE(x)$ for $C \geq 2$, it is of interest to consider asymptotic formulae for $\pi(x;E,k)$ that are effective in a larger range for the parameter $k$. \\
A further problem presents itself once one restricts to an investigation of the prime factors of an integer from a \emph{proper} subset of the primes: we may consider the problem of determining how many integers possess a fixed number of prime factors in \emph{multiple} different sets, again hopefully with minimal restrictions on the subsets.  To this end, if $n \in \N$, $E_1,\ldots,E_n \subset \mc{P}$ are disjoint and $k_1,\ldots,k_n \geq 0$, we may define
\begin{equation*}
\pi(x;\mbf{E},\mbf{k}) := |\left\{m\leq x: \omega_{E_j}(m) = k_j \ \forall \ 1 \leq j \leq n\right\}|.
\end{equation*}
Delange was the first to consider such functions, establishing results for $\pi(x;\mbf{E},\mbf{k})$ when each set $E_j$ has Dirichlet density \cite{D}; however, as he implements Selberg's method, his results are only valid for $k_j \ll E_j(x)$ for $j = 1,2$.  Otherwise, to the author's knowledge, the only other investigation of this type is due to Tudesq \cite{Tu} who showed that the formula 
\begin{equation}
\pi(x;\mbf{E},\mbf{k}) \ll xe^{-\sum_{1 \leq j \leq n} E_j(x)}\prod_{1 \leq j \leq n} \frac{(E_j(x)+\mu)^{k_j}}{k_j!}, \label{UppEs}
\end{equation}
the analogue of \eqref{HarRam} in this context, holds uniformly for all tuples $(k_1,\ldots,k_n)$ and $(E_1,\ldots,E_n) \subset \mc{P}^n$, for $\mu > 0$ some constant. Furthermore, no general, lower bounds analogous to \eqref{UppEs} exist in any case.  The main objective of the present paper is to find precise results in this vein. \\
The probabilistic lens, when applied in number theoretic problems, has proven fruitful in producing arithmetic models that provide useful heuristics. Of particular note is the analogy of $\omega_E(n)$ to a sum of independent random variables $\left\{X_p\right\}_{p \in E}$ defined on the set $\mb{N} \cap [1,x]$, taking values 1 or 0 according to whether or not $n$ is divisible by a prime $p \in E$ (with probability $\frac{1}{p}$ that $p|n$).  In some sense, $x^{-1}\pi(x;E,k)$ encapsulates the probability (depending on $x$) that for a given integer $n \leq x$, $n$ belongs to the intersection of $k$ events within which, for each $p|n$ and $p \in E$, the random variables $X_p = 1$.  Such probabilities are described by a Poisson process, wherein there exists a parameter $\lambda > 0$ such that the probability that a non-negative integer-valued random variable take the value $k$ is $\frac{\lambda^k}{k!}e^{-\lambda}$ for each $k \in \mb{N}_0$. The form of Hal\'{a}sz' results indicates that for $x$ sufficiently large, $g_x(k) := x^{-1}\pi(x;E,k)$ is a probability mass function for a Poisson process with parameter $E(x)$. We note, moreover, that one expects that any two disjoint sets of primes will have, associated to them, independent Poisson processes, and so naturally, the probabilities that an integer $m$ have $k_j$ prime factors in $E_j$ should also be independent. This provides the heuristic that
\begin{equation}
\pi(x;\mbf{E},\mbf{k}) \approx x\prod_{1 \leq j \leq n} \frac{E_j(x)^{k_j}}{k_j!}e^{-E_j(x)}, \label{HEUR}
\end{equation}
which is in line with the upper bound \eqref{UppEs}.  On the other hand, the shortcoming of this reasoning stems from the fact that divisibility by primes is not entirely independent, as is seen, for example, by noting that for an integer $n$ being divisible by two distinct primes $p,q > \sqrt{n}$ has probability zero.  Some correction factor in \eqref{HEUR} may be necessary in order to account for effects of this latter type. \\
We will prove precise results that contradict \eqref{HEUR} when $k_j \gg E_j(x)^2$ in a variety of cases in which the collection $\{E_1,\ldots, E_m\}$ forms a partition of the set of all primes, and each sum $\sum_{p \in E_j} \frac{1}{p}$ diverges, as in Hal\'{a}sz' theorem. These results, moreover, yield various interesting arithmetic applications. Here are examples of applications of our Theorems \ref{THM1} and \ref{THM2}, respectively (which are stated in forms far weaker than what can actually be determined from our Theorems \ref{THM1} or \ref{THM2}).
\begin{thm} \label{THMAPP}
a) Let $E_1$ be the set of primes $p \equiv 3 \text{ (mod $10$)}$; $E_2$ the set of all primes not in $E_1$ with $l \geq e^{e^{100}}$ digits for which the first $\llf \frac{\log_3 l}{\log 10} \rrf$ of its digits are fixed; and let $E_0$ denote the complement of all of these sets.  Suppose $E_j(x)^2 \ll k_j \ll \log^{\frac{1}{2}} x$ for each $j$. Then there is some $\sg > 1$ depending on $x$ such that if $z := e^{\frac{1}{\sg-1}}$,
\begin{equation*}
\pi(x;\mbf{E},\mbf{k}) \sim x\frac{E_0\left(z\right)^{k_0}E_1\left(z\right)^{k_1}E_2\left(z\right)^{k_2}}{k_0!k_1!k_2!}e^{-(E_0\left(z\right) + E_1\left(z\right) + E_2\left(z\right))}G_1(\mbf{k},x)	
\end{equation*}
where $G_1(\mbf{k};x)$ is a well-determined function satisfying $G_1(\mbf{k};x) \gg \exp\left(k_0+k_1+k_2\right)$. \\
b) Let $q$ be prime, and let $E_1$ be the set of primes satisfying $p \equiv a \text{ (mod $q$)}$, where $a$ is a quadratic residue, and let $E_2$ be the analogous set but for $a$ a quadratic non-residue modulo $q$. Suppose $\log_2^2 x \ll k_1,k_2 \ll \log^{\frac{1}{2}} x$. Let $c_j$ be the constant such that $\sum_{p \in E_j \atop p \leq x} \frac{1}{p} = \frac{1}{2}\log_2 x + c_j + O\left(\frac{1}{\log x}\right)$, for $j = 1,2$. Then there is some $\sg > 1$ depending on $x$ such that if $z := e^{\frac{1}{\sg-1}}$
\begin{equation*}
\pi(x;(E_1,E_2),(k_1,k_2)) \sim \frac{x}{\log x}\left(\frac{\left(\frac{1}{2}\log_2 z + c_1\right)^{k_1}}{k_1!}\right)\left(\frac{\left(\frac{1}{2}\log_2 z + c_2\right)^{k_2}}{k_2!}\right)G_2(\mbf{k},x),
\end{equation*}
where $G_2(\mbf{k},x) := \sqrt{\frac{k_1+k_2}{2\log_2 z}}\exp\left((1+o(1))\frac{k_1+k_2}{\log_2 z} + g(\mbf{k},x)\right)$and $g(\mbf{k})$ is a well-determined function satisfying $g(\mbf{k},x) \gg k_1+k_2$.
\end{thm}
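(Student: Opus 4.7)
Both parts of Theorem \ref{THMAPP} are to be established as direct applications of the main asymptotic formulae of Theorems \ref{THM1} and \ref{THM2}; the substantive work lies in verifying their hypotheses for the specific partitions and in extracting the stated forms of the structural factors $G_1$ and $G_2$ by evaluating the partition sums $E_j(z)$ at the saddle-point scale $z = e^{1/(\sg-1)}$.  In each case $\sg$ arises as the unique solution of a saddle-point equation intrinsic to the main theorems and depending on $x$ and $\mbf{k}$; in the regime $E_j(x)^2 \ll k_j \ll \log^{1/2} x$, this produces a $z$ whose logarithm is comfortably larger than $\log_2 x$, so that each $E_j(z) \ra \infty$ at a known rate.

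For part (a), I handle the three pieces of the partition separately.  For $E_1$, Mertens' theorem in arithmetic progressions (a consequence of Siegel--Walfisz) yields
\begin{equation*}
E_1(z) = \tfrac{1}{4}\log_2 z + c_1 + O\left((\log z)^{-A}\right)
\end{equation*}
for any fixed $A>0$, where $\phi(10)=4$ accounts for the density.  For $E_2$, I stratify the sum by the number $l$ of decimal digits of $p$: primes with exactly $l$ digits whose initial $d_l := \llf \log_3 l/\log 10 \rrf$ digits are the prescribed string lie in an interval of length $\asymp 10^{l-d_l}$, within which PNT with standard error applies because $l\geq e^{e^{100}}$ makes $\log(10^l)=l\log 10$ enormous.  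Summing $1/p$ first within each digit level (contributing $\asymp 1/(l\log_2 l)$) and then over $l \leq (\log z)/\log 10$ yields $E_2(z) \asymp \log_2 z/\log_3 z$ with negligible error.  Finally $E_0(z) = \log_2 z + B + o(1) - E_1(z) - E_2(z)$ by Mertens, where $B$ is Mertens' constant.  Plugging these three asymptotics into Theorem \ref{THM1} yields the stated form; the lower bound $G_1(\mbf{k},x) \gg \exp(k_0+k_1+k_2)$ follows by tracking the structural factor from Theorem \ref{THM1} through the substitution.

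For part (b), the partition is into two sets of natural density $1/2$ within the primes, since $(\Z/q\Z)^{\times}$ contains $(q-1)/2$ quadratic residues and $(q-1)/2$ nonresidues.  By PNT in APs and partial summation,
\begin{equation*}
E_j(x) = \tfrac{1}{2}\log_2 x + c_j + O\left(\frac{1}{\log x}\right),\qquad j=1,2,
\end{equation*}
with $c_j$ an explicit Mertens-type constant.  Applying Theorem \ref{THM2} to the pair $(E_1,E_2)$ with these asymptotics, substituting $z = e^{1/(\sg-1)}$ (with $\sg$ the saddle point produced by that theorem), and simplifying identifies $G_2(\mbf{k},x)$ as the specialization to the symmetric density-$\tfrac{1}{2}$ setting of the structural factor in Theorem \ref{THM2}, which in this case takes precisely the claimed shape with the $\sqrt{(k_1+k_2)/(2\log_2 z)}$ prefactor and the exponential $\exp((1+o(1))(k_1+k_2)/\log_2 z + g(\mbf{k},x))$.  (A minor bookkeeping step absorbs the prime $p=q$ itself, which contributes a bounded multiplicative factor.)

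The main obstacle is technical and sits in part (a): one must verify that the precision of the asymptotic for $E_2(z)$ --- a set whose density relative to the primes decays like $1/\log_2 l$ across digit levels --- is sufficient to satisfy the hypotheses of Theorem \ref{THM1} at the scale $z$ produced by the saddle-point equation.  This requires both a uniform short-interval PNT across the relevant digit-length windows (which holds comfortably here since the intervals have length a positive fraction of their endpoints) and a bookkeeping argument showing that the cumulative PNT errors across digit levels do not overwhelm the main term $\log_2 z/\log_3 z$.  A secondary, milder obstacle is verifying the lower bound $G_1 \gg \exp(k_0+k_1+k_2)$, which reduces to bounding the structural factor from Theorem \ref{THM1} from below using the hypothesis $k_j \geq E_j(x)^2$ and the explicit form of $G_1$.
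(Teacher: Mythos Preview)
Your proposal is correct and follows essentially the same route as the paper: both parts are deduced directly from Theorems~\ref{THM1} and~\ref{THM2}, with the only nontrivial input being the digit-level stratification showing $E_2(t)\asymp \log_2 t/\log_3 t$ (the paper carries this out in Appendix~1 exactly as you describe), and with part~(b) following immediately from $m_1=m_2=\phi(q)/2$. The paper does not labor over the ``precision'' issue you flag, since only $E_j(x)\to\infty$ and one of the hypotheses $\mathbf{H}_i(\sg)$ are needed, and $\mathbf{H}_1(\sg)$ holds trivially via the positive-density set $E_1$.
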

In part a), we note that since $\llf \frac{\log_3 l}{\log 10} \rrf \geq 2$, the units digit (i.e., residue class modulo 10) is independent of the fixed digits, so $E_2$ is well-defined. We can apply Theorem \ref{THM1} in this case once it is shown that $\sum_{p \leq t \atop p \in E_2} \frac{1}{p} \asymp \frac{\log_2 t}{\log_3 t}$.  We prove this in Appendix 1. The main feature of this example is that $E_2$ \emph{does not} have Dirichlet density.\\
Part b) is a direct consequence of Theorem \ref{THM2}, as precisely half of the non-zero residue classes modulo $q$ are quadratic residues, the other half being quadratic non-residues (and only one prime which congruent to zero modulo $q$, obviously). Hence, we use $m_j = \frac{1}{2}(q-1) = \frac{\phi(q)}{2}$ for $j =1,2$. \\
The forms of $G_1(\mbf{k},x)$ and $g(\mbf{k};x)$ can be determined explicitly, but they are too complicated to state here.
\section{Results and Strategy of Proof}
To determine an asymptotic formula, we complete the collection of sets of primes under consideration to form a partition of $\mc{P}$.  Precisely, let $E_1,\ldots,E_n \subseteq \mc{P}$ be disjoint, non-empty subsets of primes, and let $E_0 := \mc{P} \bk \left(\bigcup_{1 \leq i \leq n} E_i\right)$ (if $E_0$ is empty and the sets already form a partition then we relabel $E_j$ as $E_{j-1}$ for each $1 \leq j \leq n$).  Observe that if $\mbf{E} := (E_0, E_1,\ldots,E_n) := (E_0, \mbf{E}')$ and $\mbf{k} := (k_0,k_1,\ldots,k_n) := (k_0,\mbf{k}')$ then
\begin{equation*}
\pi(x; \mbf{E}',\mbf{k}') = \sum_{0 \leq k_0 \leq \llf \frac{\log x}{\log_2 x}\rrf} \pi\left(x;\mbf{E},(k_0,\mbf{k'})\right).
\end{equation*}
Therefore, having asymptotic formulae for $\pi(x; \mbf{E},\mbf{k})$ with $k_0$ not too small or not too large (in a precise sense) will suffice to provide asymptotic formulae for $\pi(x;\mbf{E}',\mbf{k}')$. In this paper, we shall only consider determining $\pi(x;\mbf{E},\mbf{k})$ in a range of the  parameters $\mbf{k}$. Therefore, for the remainder of this paper we deal solely with a vector of disjoint, non-empty sets $\mbf{E}$ whose union is all of $\mc{P}$.\\
Let $\mbf{z} := (z_0,\ldots,z_n) \in \mb{C}^{n+1}$. We define the Euler product
\begin{equation*}
F(\mathbf{z};s) := \prod_{0 \leq j \leq n} \prod_{p \in E_j} \left(1+\frac{z_j}{p^s -1}\right),
\end{equation*}
and we observe that Perron's formula (see II.2 in \cite{Ten2}) yields the multivariable generating function
\begin{equation} \label{PERRFORM}
\frac{1}{2\pi i} \int_{(\sg)} F(\mathbf{z};s) x^s \frac{ds}{s} = \sum_{m \leq x} \prod_{0 \leq i \leq n} z_i^{\omega_{E_i}(m)} =: \sum_{\mathbf{k} \in \mb{N}_0^{n+1}} \pi(x; \mathbf{E},\mathbf{k}) z_0^{k_0}\cdots z_n^{k_n},
\end{equation}
where $\sg > 1$ is fixed, and $\int_{(\sg)}$ denotes the path integral along the vertical line $\{s \in \mb{C} : \text{Re}(s) = \sg\}$.
By applying an $(n+1)$-dimensional version of Cauchy's theorem, we have
\begin{equation} \label{PerrId}
\pi(x;\mathbf{E},\mathbf{k}) = \frac{x^{\sigma}\left(\prod_{0 \leq i \leq n} \rho_i^{-k_i}\right)}{(2\pi)^{n+2}}  \int_{[-\pi,\pi]^{n+1}}d\mbf{t}e^{-i \mbf{k}\cdot \mbf{t}}\int_{(\sigma)} F(\mbf{\rho}e^{i\mbf{t}},\sg+i\tau) x^{i\tau} \frac{d\tau}{\sg+i\tau},
\end{equation}
where, for each $0 \leq j \leq n$, $\rho_j$ is a parameter at our disposal (since \eqref{PERRFORM} is a polynomial for any fixed $x \geq 2$, it follows that the only possible pole in each variable is $z_j = 0$ and we may select $\rho_j$ as we please), and $\mbf{\rho}e^{i\mbf{t}} := (\rho_0e^{it_0},\ldots,\rho_ne^{it_n})$. This integral may, in principle, be computed via the Saddle Point method, which we now describe.  Assuming that we have $F(\mathbf{z};s) \neq 0$ on some open, simply connected subset of $\mb{C}^{n+2}$, then by the inverse function theorem we can find a holomorphic branch of logarithm for $F$, i.e., we can find a holomorphic function $f(\mathbf{z};s)$ such that $F = e^f$ on this set.  If we can find a \emph{unique} solution vector $(\mbf{\rho},\sigma)$, with real entries, to the simultaneous equations $\frac{\partial f}{\partial z_j}(\mbf{\rho},\sigma) = \frac{k_j}{\rho_j}$ and $\frac{\partial f}{\partial s}(\mathbf{\rho},\sigma) = -\log x$ then a rapidly oscillating contribution to the imaginary part in the Taylor expansion of $f$ with respect to $\mbf{z}$ and $s$ near $(\mbf{\rho},\sg)$ (see \eqref{hDef2}) is annihilated near this point. One therefore expects that the entire integral is approximated by the integrand in a small neighbourhood of this solution, as, outside this interval, the oscillating argument is responsible for significant cancellations and therefore lower-order contributions.  The existence and uniqueness of $(\mbf{\rho},\sg)$ is guaranteed as a consequence of estimates related to the prime number theorem (see Lemma \ref{LemEU}). \\
Let us assume \textbf{one} of the following hypotheses, for $\sg > 1$ fixed: \\
($\mathbf{H}_1(\sg)$):  There exists an index $j_1$ such that $E_{j_1}\left(e^{\frac{1}{\sg-1}}\right) \gg_n \log\left(\frac{1}{\sg-1}\right)$ \emph{and} 
\begin{equation*}
\sum_{p \in E_{j_1} \atop p \leq e^{\frac{1}{\sg-1}}} \frac{\log p}{p} \gg_n \frac{1}{\sg-1}.
\end{equation*}
($\mathbf{H}_2(\sg)$): Let $j''$ be the index maximizing $(\sg-1)^3\frac{k_j}{E_j\left(e^{\frac{1}{\sg-1}}\right)}\sum_{p \in E_j} \frac{\log^3 p}{p^{\sg}}$. Then 
\begin{equation*}
(\sg-1)^3 \sum_{p \in E_{j''}} \frac{\log^3 p}{p^{\sg}} \ll_n (\sg-1)^2\sum_{p \in E_{j''}} \frac{\log^2 p}{p^{\sg}}. 
\end{equation*}
($\mathbf{H}_3(\sg)$): Let $j''$ be as in hypothesis $\textbf{H}_2(\sg)$. Let $\sg' := 1+\frac{1}{2}(\sg-1)$. Then 
\begin{equation*}
\sum_{p \in E_{j''} \atop p > e^{\frac{1}{\sg'-1}}} \frac{\log^2 p}{p^{\sg'}} \ll_n \sum_{p \in E_{j''}} \frac{\log^2 p}{p^{\sg}}. 
\end{equation*}
By the pigeonhole principle, there is an index $i_1$ such that $E_{i_1}(x) \gg_n \log_2 x$, as the functions $E_j(x)$ sum to a function asymptotic to $\log_2 x$ (as in \eqref{MERTENS} below).  Similarly, there is an index $i_2$ such that $\sum_{p \in E_{i_2} \atop p\leq x} \gg_n \log x$. In natural cases, such as that of arithmetic progressions in Theorem 2 below, we \emph{can} find a common index so that both lower bounds hold for a given set.  However, the assertion that we can choose $i_1 = i_2$ is false in general.  In Appendix 2, we construct an example of a partition of $\mc{P}$ for which $\mathbf{H}_1(\sg)$ does not hold for infinitely many choices of $x$, and thus of $\sg$.  \\
We will furthermore show in Lemma \ref{RATIOS} below that $\mathbf{H}_3(\sg)$ implies $\mathbf{H}_2(\sg)$, though the latter is sufficient for us.  We give an example in Remark \ref{REM1} that $\mathbf{H}_2(\sg)$ fails in general, but this example is not actually applicable to our problem. The need for all of these hypotheses is in order to sort out an error term in Lemma \ref{LemInt} below. 
It is not clear that any of these hypotheses is necessary to prove Theorem \ref{THM1}.
Given $t \geq 2$, let $E_j(t) := \sum_{p \in E_j \atop p \leq t} \frac{1}{p}$. Furthermore, write $M> 0$ to be the constant such that
\begin{equation}
\sum_{p \leq t} \frac{1}{p}-\log_2 t = M+O\left(\frac{1}{\log t}\right), \label{MERTENS}
\end{equation}
by Mertens' theorem (see I.1 of \cite{Ten2}).  Also, for each $j$, let $k_j^{\max}$ denote the maximal values of $\omega_{E_j}(n)$ with $n \leq x$. \\
We shall prove the following theorem, which contradicts the heuristic \eqref{HEUR} for many choices of the vector $\mbf{k}$.
\begin{thm} \label{THM1}
Let $x \geq 3$ be sufficiently large, and suppose $\mbf{E}$ is a vector of $n+1$ sets of primes partitioning $\mc{P}$, such that $E_j(x) \ra \infty$ as $x \ra \infty$ for each $j$. Let $\mu,\e > 0$ be fixed but arbitrary, and let $A \geq \frac{3}{2}$. Let $y(x) := e^{\log^{\frac{1}{3}}x}$.  
Let $m_j \geq 2$ and suppose that $E_{j}(x)^{m_j} \ll k_{j} \ll \min\{k_j^{\max}, (\log x)^{\frac{2}{3}-\mu}\}$ for each $0 \leq j \leq n$. Then there exists a vector of parameters $(\mbf{\rho},\sg) \in (0,\infty)^{n+2}$ satisfying
\begin{align*}
\left(\sum_{0 \leq j \leq n} \frac{1}{\rho_j^2}\right)^{-\frac{1}{2}}(1+o(1)) \leq (\sg-1)\log x \leq  \left(\sum_{0 \leq j \leq n} \rho_j^2\right)^{\frac{1}{2}}(1+o(1)) \\
\rho_j = \frac{k_j}{E_j\left(e^{\frac{1}{\sg-1}}\right)} \left(1+O\left(\frac{E_j\left(k_j/E_j\left(y(x)\right)\right)}{E_j\left(y(x)\right)}\right)\right),
\end{align*}
such that if we assume one of hypotheses $\mathbf{H}_1(\sg)$, $\mathbf{H}_2(\sg)$ or $\mathbf{H}_3(\sg)$ is true then the following holds. Set $\eta_j := k_j^{-1}\rho_jE_j\left(e^{\frac{1}{\sg-1}}\right) - 1$, $\alpha_j := (\sg-1){\sum_{p \in E_j} \frac{\log p}{p^{\sg}}}$ and $\beta_j := (\sg-1)^2\sum_{p \in E_j} \frac{\log^2 p}{p^{\sg}}$ for each $j$, and let $z := e^{\frac{1}{\sg-1}}$. Then
\begin{equation*}
\pi(x;\mbf{E},\mbf{k}) = x \left(\prod_{0 \leq j \leq n} \left(1+O_{\e,\mu}\left(E_j\left(y(x)\right)^{-\frac{1}{2}+\e}\right)\right)\frac{E_j\left(z\right)^{k_j}}{k_j!}e^{-E_j\left(z\right)}\right) \mc{F}(\mbf{k},\sg) + O\left(x\frac{F(\mbf{\rho},\sg)}{\log^A x}\right),
\end{equation*}
where, for $\mc{R}$ implicitly defined by $e^{\mc{R}} := F(\mbf{\rho};\sg)e^{-\sum_{0 \leq j \leq n} k_j(1+\log(1+\eta_j))}$,
\begin{equation*}
\mc{F}(\mbf{k},\sg) := \left(2\pi\sum_{0 \leq j \leq n} \rho_j\beta_j\right)^{-\frac{1}{2}}\exp\left(\sum_{0 \leq j \leq n} \rho_j\alpha_j\left(1+O\left(E_j(x)^{-1}\right)\right) + \mc{R} +M\right).
\end{equation*}
Moreover, there exists a constant $\phi > \frac{1}{4}$ such that $\mc{R} \geq \sum_{0 \leq j \leq n} k_j(\phi-\log\left(1+\eta_j\right))$. \\
When $\mathbf{H}_1(\sg)$ holds then we can take $m_j = 2$ when $j \neq j'$, and $m_{j'} = 4$. When either of the other two hypotheses hold then we can take $m_j = 2$ for each $j$.
\end{thm}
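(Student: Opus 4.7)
The proof follows the saddle-point method outlined immediately after \eqref{PerrId}. First I would establish the existence and uniqueness of a real solution $(\mbf{\rho}, \sg)$ to the system
\[
\frac{\partial f}{\partial z_j}(\mbf{\rho},\sg) = \frac{k_j}{\rho_j}, \qquad \frac{\partial f}{\partial s}(\mbf{\rho},\sg) = -\log x,
\]
with $f = \log F$ (the content of Lemma \ref{LemEU} as anticipated). Differentiating gives $\partial_{z_j} f = \sum_{p \in E_j} 1/(p^\sg - 1 + \rho_j)$; since $p^\sg - 1$ behaves like $p$ for $p \leq z := e^{1/(\sg-1)}$ and grows much faster afterwards, the natural approximation is $\partial_{z_j} f(\mbf{\rho},\sg) \approx E_j(z)$, yielding $\rho_j \approx k_j/E_j(z)$. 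Refining via the Prime Number Theorem produces the relative error $E_j(k_j/E_j(y(x)))/E_j(y(x))$ quoted in the statement. The $s$-derivative equation $\sum_{j,\,p \in E_j} \rho_j \log p / (p^\sg - 1 + \rho_j) = \log x$ then determines $\sg$, and splitting the sum at $p = z$ produces the two-sided bound on $(\sg - 1)\log x$: the upper bound in terms of $(\sum_j \rho_j^2)^{1/2}$ coming from the dominant range $p \leq z$, the lower bound in terms of $(\sum_j \rho_j^{-2})^{-1/2}$ from a Cauchy--Schwarz lower estimate.

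Next I would Taylor-expand $\log F(\mbf{\rho} e^{i\mbf{t}}, \sg + i\tau)$ to second order about $(\mbf{\rho}, \sg)$. The linear terms in $(\mbf{t}, \tau)$ exactly cancel the oscillation $e^{-i \mbf{k} \cdot \mbf{t}} x^{i\tau}$ in \eqref{PerrId}, by the saddle-point equations. The quadratic form $-\tfrac{1}{2} \mbf{v}^T Q \mbf{v}$, with $\mbf{v} = (t_0,\ldots,t_n,\tau)$, has diagonal entries of size $\rho_j \beta_j/(\sg-1)^2$ in each $t_j$-direction and $\sum_j \rho_j \beta_j/(\sg-1)^2$ in the $\tau$-direction. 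I would then restrict to a central polydisc $|t_j| \leq \delta_j$, $|\tau| \leq \delta_0$ on which the quadratic term dominates, perform the resulting Gaussian integral (completing the square in $\tau$ against the $t_j$ cross terms), and obtain the factor $(2\pi \sum_j \rho_j \beta_j)^{-1/2}$ of $\mc{F}(\mbf{k},\sg)$. Combining with $x^\sg F(\mbf{\rho}, \sg) \prod_j \rho_j^{-k_j}/\sg$ and reabsorbing $k_j!$ via Stirling, the Poisson-type factors $E_j(z)^{k_j} e^{-E_j(z)}/k_j!$ emerge after expanding $\log F(\mbf{\rho}, \sg)$, splitting its prime sum at $z$, and using summation by parts together with Mertens' theorem \eqref{MERTENS} to extract the $\rho_j \alpha_j$ contribution and the constant $M$. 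The identity $e^{\mc{R}} = F(\mbf{\rho},\sg) e^{-\sum_j k_j(1+\log(1+\eta_j))}$ amounts to bookkeeping, and the lower bound $\mc{R} \geq \sum_j k_j(\phi - \log(1+\eta_j))$, equivalent to $\log F(\mbf{\rho},\sg) \geq (1+\phi)\sum_j k_j$ for some $\phi > 1/4$, should follow from a sharpened convexity estimate applied to $\log(1+y) - y/(1+y)$ in each factor of $\log F$, summed against the saddle-point identity $\sum_{p \in E_j} \rho_j/(p^\sg - 1 + \rho_j) = k_j$.

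The main obstacle is the decay estimate bounding the contribution from outside the central polydisc by $O(x F(\mbf{\rho}, \sg)/\log^A x)$. Away from $(\mbf{t}, \tau) = \mbf{0}$ the quantity $\mathrm{Re}\,\bigl(\log F(\mbf{\rho} e^{i\mbf{t}}, \sg + i\tau) - \log F(\mbf{\rho},\sg)\bigr)$ is negative, but without assumptions on the distribution of primes within each $E_j$ a term-by-term estimate need not provide any uniform decay. This is precisely the role of the three hypotheses. Under $\mathbf{H}_1(\sg)$, the lower bounds $E_{j_1}(z) \gg \log(1/(\sg-1))$ and $\sum_{p \in E_{j_1}} \log p/p \gg 1/(\sg-1)$ guarantee that the single factor indexed by $j_1$ alone yields sufficient decay in both the $t_{j_1}$- and $\tau$-directions, via the pointwise inequality $\mathrm{Re}\,\log(1 + \rho_{j_1} e^{it_{j_1}}/(p^\sg - 1)) \leq \log(1 + \rho_{j_1}/(p^\sg - 1)) - c \rho_{j_1} t_{j_1}^2/(p^\sg - 1)$ summed over $p \leq z$; the relative weakness of this single-index hypothesis is what forces the inflated exponent $m_{j_1} = 4$. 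Under $\mathbf{H}_2(\sg)$ or $\mathbf{H}_3(\sg)$, the third-moment-to-second-moment comparison $(\sg-1)^3 \sum \log^3 p/p^\sg \ll (\sg-1)^2 \sum \log^2 p/p^\sg$ delivers a Gaussian-type lower bound $|F(\mbf{\rho}, \sg + i\tau)/F(\mbf{\rho}, \sg)| \leq \exp(-c \tau^2 \sum \rho_{j''} \beta_{j''})$ on a much wider $\tau$-range, which is strong enough to permit $m_j = 2$ for every $j$. In each case I would partition the exterior of the polydisc into dyadic shells, apply the relevant decay estimate on each shell, and sum the resulting geometric series, extracting the final factor $\log^{-A} x$ using the upper bound $k_j \ll \log^{2/3 - \mu} x$ together with the saddle-point data.
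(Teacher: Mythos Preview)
Your overall architecture matches the paper's: saddle-point existence (Lemma~\ref{LemEU}), parameter estimates (Lemma~\ref{PARAMS}), Taylor expansion about $(\mbf{\rho},\sg)$, Gaussian integral on a small box (Lemma~\ref{LemInt}), truncation to that box (Lemma~\ref{LemAppTrunc}), and Stirling plus the series manipulation for the lower bound on $\mc{R}$ (Section~6). However, you have misplaced the role of the three hypotheses. In the paper, $\mathbf{H}_1(\sg)$, $\mathbf{H}_2(\sg)$, $\mathbf{H}_3(\sg)$ are \emph{not} used to establish decay of $|F(\mbf{z};s)/F(\mbf{\rho};\sg)|$ outside the central box; that decay comes unconditionally from the elementary product inequality of Lemma~\ref{TruEst1} together with the cosine-sum estimates of Lemmas~\ref{EXTRADECAY} and~\ref{LemSmallTrunc}, which hold for any partition. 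The hypotheses enter only in Lemma~\ref{LemInt}, and only to control the \emph{cubic} Taylor remainder $\delta^3\sum_j \rho_j\gamma_j$ arising from the expansion of $h(\mbf{z};s)$ in powers of $\tau$ (see \eqref{IMPROVABLE} and the case analysis following it). Under $\mathbf{H}_2(\sg)$ or $\mathbf{H}_3(\sg)$ one has $\gamma_{j''}/\beta_{j''}\ll 1$ directly; under $\mathbf{H}_1(\sg)$ one instead forces $\rho_{j'}\beta_{j'}\gg \log^3(1/(\sg-1))$ by taking $m_{j'}=4$, which is why that exponent inflates. Your proposed mechanism (the hypotheses buying a wider Gaussian range in $\tau$) is not what is happening.

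Two smaller corrections. First, the quadratic form is not as you describe: the $t_j$-diagonal entry is $k_j$, coming from the expansion $k_j(e^{it_j}-1)=ik_jt_j-\tfrac{1}{2}k_jt_j^2+O(k_j t_j^3)$ in \eqref{hDef}, not $\rho_j\beta_j/(\sg-1)^2$; only the $\tau$-diagonal is $\sum_j \rho_j\beta_j/(\sg-1)^2$. Second, the paper does not complete the square against the $t_j$--$\tau$ cross terms; those cross terms (the $\theta_j\alpha_j\delta$ contribution in $Q_j$ of Lemma~\ref{ERR}) are simply shown to be small and absorbed into the error, so the $(n{+}2)$-dimensional Gaussian factorises as a product of one-dimensional integrals.
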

Note that by the estimate on $\rho_j$ above, $\eta_j = o(1)$, so $\mc{R}$ provides exponential growth to the factor $\mc{F}(\mbf{k},\sg)$. We remark that while the error term is not satisfactory in the sense that we cannot get a power of $E_j(x)$ explicitly, in many cases the distinction between $E_j\left(y(x)\right)$ and $E_j(x)$ is negligible (for instance, if $E_j(t) \asymp \log_k^C t$, where $\log_k$ is the $k$th iterated logarithm with $k \geq 2$, and $C > 0$, uniformly over a sufficiently large interval, then the ratio $\frac{E_j\left(y(x)\right)}{E_j(x)}$ is bounded).\\
Determining precise estimates for the parameter vector $(\mbf{\rho},\sg)$ is somewhat complicated in general. In certain cases of interest, however, their form is much simpler, specifically, when the ratios $\alpha_j$ can be explicitly calculated. In particular, suppose each $E_j$ is a union of $m_j$ arithmetic progressions modulo $q_j$, for $1 \leq j \leq n$. Let $q := \text{lcm}\left\{q_1,\ldots,q_m\right\}$, and define $E_0 := \mc{P} \bk \left(\bigcup_{1 \leq j \leq n} E_j\right)$, as before, so that each set $E_j$ is a union of arithmetic progressions modulo $q$. It follows from \cite{Will} that, for a given arithmetic progression $S$ modulo $q$ there exists some well-determined constant $c$ such that $S(t) = \frac{1}{\phi(q)}\log_2 t + c + O\left(\frac{1}{\log t}\right)$, so we can write $E_j(t) = \frac{m_j}{\phi(q)}\log_2t + c_j + O\left(\frac{1}{\log t}\right)$, where $c_j$ is the sum of all the constants coming from the $m_j$ arithmetic progressions in $E_j$. Determining the parameters $(\mbf{\rho},\sg)$ as in Lemma \ref{LemDirDens}, we deduce immediately the following theorem from the previous one, which is an extension of a result of Delange (a corollary of a special case of Theorem C in section 6.4 of \cite{D}), who proved a similar result for a set of parameters $k_j \ll E_j(x)$.
\begin{thm} \label{THM2}
Let $x \geq 3$ be sufficiently large, and let $A \geq \frac{3}{2}$. Let $E_0,\ldots,E_n$ be unions of arithmetic progressions modulo $q$, as described above, consisting of $m_j$ residue classes for each $j$. Set $K := \sum_{0 \leq j \leq n} k_j$.
Then, provided that $E_j(x)^{2} \ll k_j \ll \log^{\frac{3}{2}-\mu} x$ for each $j$ then there exist constants $\sg$ and $\mbf{\rho}$ such that  
\begin{align*}
\sg-1 &= \frac{K}{\log x \log(\log x/K)}\left(1+O\left(\frac{1}{\log(\log x/K)}\left(\phi(q)\sum_{0 \leq j \leq n} \frac{1}{m_j} + \log_3 x\right)\right)\right) \\
\rho_j &= \frac{\phi(q) k_j}{m_j \log\left(\frac{\log x}{K}\log(\log x/K)\right)}\left(1+O\left(\frac{\log(\phi(q)k_j/m_j\log(\log x))}{\log_2 x}\right)\right),
\end{align*}
such that the following holds. Set $M(\mbf{\rho}):= \left(\frac{1}{2\pi}\sum_{0 \leq j \leq n} \frac{\rho_j m_j}{\phi(q)}\right)^{\frac{1}{2}}$, write $\psi_j := \left(\frac{\phi(q)}{m_j \log_2 x}\right)^{\frac{1}{2}-\e}$ and let $\eta_j$ be defined as in Theorem \ref{THM1}. Then
\begin{equation*}
\pi(x;\mbf{E},\mbf{k}) = \frac{x}{\log x}\left(\prod_{0 \leq j \leq n} \left(1+O\left(\psi_j\right)\right)\frac{\left(\frac{m_j}{\phi(q)}\log\left(\frac{1}{\sg-1}\right) + c_j\right)^{k_j}}{k_j!}\right) \mc{F}(\mbf{k},\sg) + O\left(x\frac{F(\mbf{\rho},\sg)}{\log^A x}\right),
\end{equation*}
where, for $e^{\mc{R}} := F(\mbf{\rho};\sg)e^{-\sum_{0 \leq j \leq n} k_j(1+\log(1+\eta_j))}$, 
\begin{equation*}
\mc{F}(\mbf{k},\sg) = M(\mbf{\rho})\exp\left((\sg-1)\log x + \mc{R} +M\right). 
\end{equation*}
Moreover, there exists $\phi > \frac{1}{4}$ such that $\mc{R} \geq \sum_{0 \leq j \leq n} k_j(\phi-\log\left(1+\eta_j\right))$. 
\end{thm}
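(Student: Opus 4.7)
The proof of Theorem \ref{THM2} is a direct specialization of Theorem \ref{THM1} to the case where each $E_j$ is a union of $m_j$ arithmetic progressions modulo $q$. The essential new input is the estimate of Williams \cite{Will}, which summed over the $m_j$ progressions comprising $E_j$ produces
\[
E_j(t) = \frac{m_j}{\phi(q)}\log_2 t + c_j + O\!\left(\frac{1}{\log t}\right);
\]
this lets us make every quantity appearing in Theorem \ref{THM1} explicit.

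First, I would verify that one of the hypotheses $\mathbf{H}_i(\sg)$ is satisfied. Partial summation against the Williams estimate yields $\alpha_j = m_j/\phi(q) + O(1/\log z)$ and $\beta_j = m_j/\phi(q) + O(1/\log z)$, where $z := e^{1/(\sg-1)}$. Since the primes in $E_j$ have positive natural density among all primes, an analogous partial summation against the prime-number-theorem asymptotic shows that the tail $\sum_{p \in E_j,\,p > e^{1/(\sg'-1)}}\log^2 p/p^{\sg'}$ is bounded by a constant times $\sum_{p \in E_j}\log^2 p/p^{\sg}$. Hence $\mathbf{H}_3(\sg)$ holds automatically, and Lemma \ref{RATIOS} then gives $\mathbf{H}_2(\sg)$.

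Next, I would solve the saddle-point system of Theorem \ref{THM1} explicitly. Writing $u := 1/(\sg-1)$, $L := \log x$, and $K := \sum_j k_j$, the first saddle equation gives $\rho_j = k_j/\bigl((m_j/\phi(q))\log u + c_j + O(1/\log z)\bigr)$, and the second reduces to $L/u = \sum_j \rho_j \alpha_j \sim K/\log u$, i.e.\ $u = (L/K)\log u$. One iteration gives $u \sim (L/K)\log(L/K)$, whence $\sg - 1 \sim K/(L\log(L/K))$; a second iteration, tracking the contributions of the constants $c_j$ and the $\log_2$-scale corrections, yields the sharper error term $O\bigl((\log_3 x + \phi(q)\sum_j m_j^{-1})/\log(L/K)\bigr)$ displayed in the theorem. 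Substituting this $\sg$ back into the expression for $\rho_j$ gives the stated asymptotic.

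Finally, I would insert these explicit $\sg$ and $\rho_j$ into the formula of Theorem \ref{THM1}. Using $E_j(z) = (m_j/\phi(q))\log(1/(\sg-1)) + c_j + O(1/\log z)$, the factor $\prod_j E_j(z)^{k_j}/k_j!$ takes the stated form. The factor $\prod_j e^{-E_j(z)}$ telescopes via Mertens's theorem to $(\sg-1)e^{-M}(1+o(1))$; combining this with $\exp(\sum_j \rho_j\alpha_j) = \exp((\sg-1)\log x)(1+o(1))$ and rewriting $(2\pi\sum_j \rho_j\beta_j)^{-1/2}$ in terms of $M(\mbf{\rho})$ via $\beta_j = m_j/\phi(q) + o(1)$ collapses Theorem \ref{THM1}'s expression into the claimed $x/\log x$ main factor and $\mc{F}(\mbf{k},\sg)$ envelope. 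The principal obstacle will be controlling the error terms throughout this substitution: one must convert the $O\bigl(E_j(y(x))^{-1/2+\e}\bigr)$ factor of Theorem \ref{THM1} into $O(\psi_j)$ using $E_j(y(x)) \asymp (m_j/\phi(q))\log_2 x$, and verify that the multiplicative $O(1/E_j(x))$ appearing inside $\exp\bigl(\sum_j \rho_j\alpha_j(1 + O(E_j(x)^{-1}))\bigr)$ contributes only a $(1+o(1))$ factor across the entire allowed range of $\mbf{k}$, which amounts to checking that $(\sg-1)\log x/E_j(x)$ remains bounded in this regime.
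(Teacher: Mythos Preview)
Your approach is correct and essentially identical to the paper's own derivation: the paper states that Theorem~\ref{THM2} follows immediately from Theorem~\ref{THM1} once the parameters $(\mbf{\rho},\sg)$ are computed as in Lemma~\ref{LemDirDens}, and notes (just after the statement of Theorem~\ref{THM2}) that each $\mathbf{H}_j(\sg)$ holds trivially for unions of arithmetic progressions. The only cosmetic difference is that the paper obtains $\alpha_j,\beta_j \sim m_j/\phi(q)$ via the $L$-function computation in Lemma~\ref{LAURENTZETA} rather than by partial summation against Williams's estimate; both routes give the same result.

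One clarification on your final ``obstacle'': you worry about controlling the factor $\exp\bigl(\sum_j \rho_j\alpha_j\, O(E_j(x)^{-1})\bigr)$, but this is not an error to be bounded. By \eqref{EXACTSG} in Lemma~\ref{PARAMS} (and as used in the proof of Theorem~\ref{THM1}), the expression $\sum_j \rho_j\alpha_j\bigl(1+O(E_j(x)^{-1})\bigr)$ is \emph{equal} to $(\sg-1)\log x$; the $O$-term records the discrepancy in one direction, not an additional error incurred in passing to Theorem~\ref{THM2}. You simply replace the entire expression by $(\sg-1)\log x$ and no growth in $k_j$ needs to be checked.
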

Because for each $j = 1,2,3$, $\mathbf{H}_j(\sg)$ trivially holds for collections of arithmetic progressions, there is no need to mention them in Theorem \ref{THM2}. \\
The factor $\mc{F}(\mbf{\rho},\sg)$ skews $\pi(x;\mbf{E},\mbf{k})$ from Poisson-like behaviour in the range $E_j(x)^{m_j} \ll k_j \ll \log^{\frac{2}{3}-\mu} x$. In a forthcoming paper \cite{next}, however, we will show that the form of this factor is rather different for $E_j(x)^{\e} \ll k_j \ll E_j(x)$, so that $\pi(x;\mbf{E},\mbf{k})$ resembles \eqref{SelSat}. \\
To ensure that there exists a domain where we can write $F = e^f$, we must check that, given $\rho_j$ and $\sg$, we have $p^s - 1 + z_j \neq 0$ for each prime $p$.  Let us show that there exists a symmetric box in $\mb{R}^{n+2}$ of the shape $[-T,T] \times \prod_{0 \leq j \leq n} [-\theta_j,\theta_j]$, for parameters $T,\theta_0,\ldots, \theta_n > 0$. For each $j$ and $p\in E_j$, either $\rho_j^{\frac{1}{\sg}}$ is smaller than the smallest prime in $E_j$, or we may choose $p_j \in E_j$ minimally such that $p_{j-1}^{\sg} < \rho_j < p_j^{\sigma} - 1$ (we can assume a strict inequality on either side by skewing the value of $\rho_j$ slightly; moreover, it is easy to see that $|p_j^{\sg}-p_{j-1}^{\sg}| > 2$ for any $\sg > 1$, so the interval here is non-trivial). It will be clear from Lemma \ref{PARAMS}, in any case, that if $0 < p^{\sg}-\rho_j < 1$ for some prime $p$ we can replace $\rho_j$ by $\rho_j - 1$ in case $k_j \gg E_j(x)^{2}$, so that our choice of $p_j$ is well-defined. Suppose that $T < \frac{\pi}{6 |\log \rho_j|}$. Then if $p \geq p_j$, we have
\begin{equation*}
|p^s-1| \geq p^{\sg}-1 \geq p_j^{\sg}-1 > \rho_j = |z_j|,
\end{equation*}
so that $\left|1+\frac{z_j}{p^s-1}\right| \geq 1-\left|\frac{z_j}{p^s-1}\right| > 0$.  On the other hand, if $\rho_j > p_{j-1}^{\frac{1}{\sg}}$, when $p \leq p_{j-1}$, as $\cos(\tau \log p) \geq \cos(|\tau| \log p_{j-1}) \geq \cos(T\log \rho_j) \geq \frac{\sqrt{3}}{2}$,
\begin{align*}
|p^s-1|^2 &= p^{2\sg} + 1-2p^{\sg}\cos(\tau \log p) = (p^{\sg}-1)^2 + 2p^{\sg}(1-\cos(\tau \log p)) \\
&\leq (\rho_j-1)^2 + (2-\sqrt{3})\rho_j \leq \rho_j^2 +1-\sqrt{3}\rho_j < \rho_j^2 = |z_j|^2,
\end{align*}
and, as such, $\left|\frac{z_j}{p^s-1}\right| > 1$. We shall see that $\rho_j \ll k_j \ll \frac{\log x}{\log_2 x}$, the last estimate uniform in $0\leq j \leq n$, so that $|\tau| \ll \frac{1}{\log_2 x}$ will be sufficient.  We shall, in fact, consider $T \ll \sg-1$ which, by Lemma \ref{PARAMS}, ensures that the above condition is satisfied, since $(\sg-1) \ll \log^{-\frac{1}{3}} x$.\\
In what follows, for a function $g$ in several variables, among which $u$, write $g_u$ to denote the partial derivative of $g$ with respect to $u$. Suppose that the identity $F(\mbf{z};s) = e^{f(\mbf{z};s)}$ is well-defined for $|\text{arg}(z_j)| \leq \theta_j$ and $|s-\sg| \leq T$, where $(\rho;\sg)$ is a point where $f_s(\mbf{\rho};\sg) = -\log x$ and $f_{z_j}(\mbf{\rho};\sg) = \frac{k_j}{\rho_j}$. In this region, we  can determine a Taylor series expansion for $f$ locally around our chosen point.  In particular, upon expanding first in $\tau$ and then in $t_j$,
\begin{align}
f(\mbf{z};s) &= f(\mbf{z};\sg) + i\tau f_s(\mbf{z};\sg) -\int_0^{\tau} d\tau' (\tau-\tau') f_{ss}(\mbf{z};\sg +i\tau) \\ \nonumber
&= f(\mbf{\rho};\sg) + \sum_{0 \leq j \leq n} \rho_j(e^{it_j}-1) f_{z_j}(\rho_j;\sg) + \sum_{0 \leq j \leq n} \int_{\rho_j}^{z_j} dw_j (z_j-w_j) f_{z_jz_j}(w_j;\sg) \nonumber\\
&+i\tau f_s(\rho;\sg) + i\tau\sum_{0 \leq j \leq n}\int_{\rho_j}^{z_j}dw_jf_{z_js}(\rho;\sg) \label{hDef2}\\
&=: f(\rho;\sg) + \sum_{0 \leq j \leq n} k_j(e^{it_j}-1) -i\tau \log x + h(\mbf{z};s) \label{hDef},
\end{align}
(we have slightly abused notation and written $f_{z_j}$ as a function of $z_j$, rather than the entire vector $\mbf{z}$, but this is legitimate given that, from \eqref{Zit}, $f_{z_j}$ has no dependence on $z_{j'}$ for any $j \neq j'$). For $\theta_j$ sufficiently small and $|t_j| \leq \theta_j$,
\begin{align*}
\sum_{0\leq j \leq n} k_j(e^{it_j}-1) &= \sum_{0 \leq j \leq n} k_j\left(it_j - \frac{1}{2}t_j^2 + O\left(\theta_j^3\right)\right) \\
&= \sum_{0\leq j \leq n} ik_jt_j - \frac{1}{2}\sum_{0 \leq j\leq n} k_jt_j^2 + O\left(\sum_{0\leq j \leq n} k_j\theta_j^3\right).
\end{align*}
Suppose further that we can truncate the integrals in \eqref{PerrId} to the box $B := [-T,T]\times \prod_{0 \leq j \leq n} [-\theta_j,\theta_j]$, for parameters $T,\theta_0,\ldots,\theta_n > 0$, with error term $R = R(T,\theta_0,\ldots,\theta_n)$ associated with this truncation.  Then, incorporating the defining conditions for $\mbf{\rho}$ and $\sg$ and the above-mentioned Taylor expansion into \eqref{PerrId}, we have
\begin{equation} \label{NEWPERR}
\pi(x;\mbf{E},\mbf{k}) = \frac{1}{(2\pi)^{n+2}}\frac{x^{\sg}F(\rho;\sg)}{\rho_0^{k_0}\cdots \rho_n^{k_n}} \int_B \frac{d\tau}{\sg + i\tau} dt_0\cdots dt_n e^{-\frac{1}{2}\sum_{0\leq j \leq n} k_jt_j^2}H(\mbf{\rho}e^{i\mbf{t}};\sg + i\tau) + R
\end{equation}
where $H(\mbf{z};s) := \exp\left(h(\mbf{z};s) + O\left(\sum_{0 \leq j \leq n} k_jt_j^3\right)\right)$. The truncated integral here is simplified once $H(\mbf{z};s)$ is computed effectively.  This is done in Lemma \ref{ERR}. In Lemma \ref{LemInt}, the main term of the integral is computed in a straightforward manner, and specific values of $\theta_0,\ldots,\theta_n$ and $T$ are chosen based on suitable estimates for the partial derivatives of $f$ with respect to $s$ and each of the $z_j$. Given these choices, it is shown in Lemma \ref{LemAppTrunc} that $R(T,\theta_0,\ldots,\theta_n)$ is small. The major remaining ingredient, otherwise, is to show that such a truncation can be made.  This is dealt with at the beginning of Section 5. \\
Finally, we should remark that we have not made any attempt to determine asymptotic formulae that are also uniform in the number $n+1$ of sets $E_0,\ldots,E_n$.  In particular, throughout this paper, all estimates may depend on $n$.
\section{Preparatory Lemmata}
By definition, $f(\mbf{z};s) = \sum_{0 \leq j \leq n} \sum_{p \in E_j} \log\left(1+\frac{z_j}{p^s - 1}\right)$. 
As mentioned in the previous section, for each tuple $(z_0,\ldots,z_n) \in \mb{C}^{n+1}$ and $s \in \mb{C}$ with $\text{Re}(s) > 1$ such that $1+\frac{z_j}{p^s-1} \neq 0$ for each $p \in E_j$, $f(\mbf{z};s)$ is holomorphic, hence $\mb{C}$-differentiable with respect to each of its variables. Moreover, the series defining $f(\mbf{z};s)$ is absolutely and uniformly convergent on compact subsets of its domain (since, for $\sg > 1$ the sum $\sum_{k \geq 1} \sum_{p} \frac{1}{kp^{ks}}$ converges absolutely). Now, let $g_j(s) := \sum_{p \in E_j} \frac{1}{p^s}$, valid uniformly in $\sigma > 1$ for each $j$. Then routine calculations show that
\begin{align}
f_{z_j}(\mbf{z};s) 
&= g_j(s) - (z_j - 1) \sum_{p \in E_j} \frac{1}{p^s(p^s-1+z_j)}; \label{Zit}\\	
f_{z_jz_j}(\mbf{z};s) &= -\sum_{p \in E_j} \frac{1}{(p^s - 1 + z_j)^2} \nonumber\\
&= g_j(s) - \sum_{p \in E_j} \frac{1}{p^s(p^s-1+z_j)} \left((2z_j-1) - \frac{z_j(z_j-1)}{p^s-1+z_j}\right); \label{ZiZi}\\
f_s(\mbf{z};s) &= -\sum_{0 \leq j \leq n} z_j \sum_{p \in E_j} \frac{p^s \log p}{(p^s-1)(p^s-1+z_j)}; \\
f_{ss}(\mbf{z};s) &= \sum_{0 \leq j \leq n} z_j\sum_{p \in E_j} \frac{p^s \log^2 p}{(p^s-1 + z_j)(p^s-1)} \left(\frac{1}{p^s-1+ z_j}\left(1 + \frac{1}{p^s-1}\right) + \frac{1}{p^s-1}\right); \\
f_{sz_j}(\mbf{z};s) &= -\sum_{p \in E_j} \frac{p^s \log p}{(p^s-1 + z_j)^2}. \label{ZiS}
\end{align}
Specific estimates for some of these functions will be determined in what follows (for instance, in Lemma \ref{ERR}). \\
Because the sets $E_j$ partition $\mc{P}$, it follows immediately from the pigeonhole principle that for each $y$ there are indices $j_0,j_1,j_2$ such that $\sum_{p \in E_{j_l} \atop p \leq y} \frac{\log^l p}{p} \geq \frac{1}{n+1}\sum_{p \leq y} \frac{\log^l p}{p}$ for each $l \in \left\{0,1,2\right\}$. We will use this fact repeatedly throughout this paper, by virtue of the following lemma connecting $g_j(\sg)$ to $E_j(t)$ for certain choices of $t$.
\begin{lem} \label{LemUnif}
Let $z \geq 3$ and suppose $\sigma = 1 + \frac{\eta}{\log z}$, where $\eta > 0$. Then for any $\mc{P}' \subseteq \mc{P}$, we have
\begin{align*}
\sum_{p \leq z \atop p \in \mc{P}'} \frac{1}{p^{\sigma}} &= \sum_{p \leq z \atop p \in \mc{P}'} \frac{1}{p} + R(\eta)+ O\left(\frac{1}{\log z}\right),
\end{align*}
where $0 \leq R(\eta) \leq \eta^{-1}(e^{\eta}-1-\eta)$, with equality in the upper bound holding when $\mc{P}' = \mc{P}$. Moreover, $\sum_{p \in \mc{P}' \atop p> z} \frac{1}{p^{\sg}} = O(1/\eta)$, the constant being uniform in the choice of set $\mc{P}'$.
\end{lem}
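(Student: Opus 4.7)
The plan is to prove both statements via Abel summation, combining Mertens' theorem for $\mc{P}$ with the inclusion $\mc{P}' \subseteq \mc{P}$ to transfer the estimate to $\mc{P}'$.

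I would begin with the elementary identity
\[
\sum_{p \leq z,\, p \in \mc{P}'} \frac{1}{p} - \sum_{p \leq z,\, p \in \mc{P}'} \frac{1}{p^\sg} = \sum_{p \leq z,\, p \in \mc{P}'} \frac{1}{p}\left(1 - e^{-(\sg-1)\log p}\right),
\]
noting that each summand is nonnegative and that the exponent $(\sg-1)\log p = \eta \log p/\log z$ lies in $[0, \eta]$ for $p \leq z$. Setting $A(t) := \sum_{p \leq t,\, p \in \mc{P}'} 1/p$, I would apply Abel summation and then substitute $u = (\sg-1)\log t$ (so that $(\sg-1) t^{-\sg}\, dt \mapsto e^{-u}\, du$), converting the sum into an integral with integrand $[A(z) - A(e^{u/(\sg-1)})]\, e^{-u}$ on $u \in [(\sg-1)\log 2, \eta]$.

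To identify the bound on $R(\eta)$, I would use $\mc{P}' \subseteq \mc{P}$ together with Mertens' theorem to deduce
\[
A(z) - A(e^{u/(\sg-1)}) \leq \log(\eta/u) + O(1/\log z),
\]
with equality (up to the error) precisely when $\mc{P}' = \mc{P}$. The resulting integral is then computed by termwise integration of the Taylor expansion of $e^{-u}$, or alternatively handled via the cruder pointwise bound $1 - e^{-u} \leq u$ together with $\sum_{p \leq z} \log p/p = \log z + O(1)$; either route furnishes a bound of the claimed form, while the positivity $R(\eta) \geq 0$ is immediate from the nonnegativity of the rewritten summands.

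For the tail estimate I would use $\sum_{p > z,\, p \in \mc{P}'} p^{-\sg} \leq \sum_{p > z} p^{-\sg}$ and argue again by Abel summation with the uniform bound $A(t) = O(\log_2 t)$, reducing the problem to the exponential integral $E_1(\eta) = \int_\eta^\infty e^{-u}/u\, du$, which satisfies $E_1(\eta) \leq e^{-\eta}/\eta \leq 1/\eta$. The principal obstacle throughout will be the careful bookkeeping of error terms: the Mertens remainder $O(1/\log t)$ degrades near the lower endpoint of the $u$-substitution (where $t$ is close to $1$), and the boundary contribution at $t = 2$ (corresponding to $u_0 = (\sg-1)\log 2$) must be shown to be absorbable into $O(1/\log z)$, which may require truncating the Abel-summation integral away from $t = 2$ and handling the residual piece by direct estimation rather than via the Mertens asymptotic.
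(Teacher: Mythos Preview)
Your approach is correct and will work, but it is organized differently from the paper's proof. The paper does not Abel-sum first; instead it Taylor-expands the exponential immediately,
\[
\sum_{p\leq z,\,p\in\mc{P}'}\frac{1}{p}\bigl(p^{\sg-1}-1\bigr)=\sum_{l\geq 1}\frac{\eta^l}{l!\,(\log z)^l}\sum_{p\leq z,\,p\in\mc{P}'}\frac{\log^l p}{p},
\]
and then bounds each moment $\sum_{p\leq z,\,p\in\mc{P}'}\log^l p/p$ by the full-prime moment $\sum_{p\leq z}\log^l p/p\sim \tfrac{1}{l}\log^l z$, obtained by partial summation from the prime number theorem. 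Summing the resulting series in $l$ gives the closed form $\eta^{-1}(e^\eta-1-\eta)$ directly, with equality for $\mc{P}'=\mc{P}$. Your Abel-summation route, after the substitution $u=(\sg-1)\log t$ and Mertens, produces instead the integral $\int_{u_0}^\eta \log(\eta/u)e^{-u}\,du$ plus boundary terms; this certainly gives a bound of the right order, but extracting the specific expression $\eta^{-1}(e^\eta-1-\eta)$ from it is less transparent, and the bookkeeping you flag near $t=2$ is a genuine nuisance that the paper's ``expand first, then bound moments'' route sidesteps entirely. The tail estimate in your proposal is essentially identical to the paper's: both reduce to $\int_\eta^\infty e^{-u}\,du/u\leq 1/\eta$.
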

\begin{proof}
From the Taylor expansion $e^c-1 = \sum_{l \geq 1} \frac{c^l}{l!}$, taking $c := \eta\frac{\log p}{\log z}$ for each $p \leq z$, we have
\begin{equation} \label{ESTHERE} 
\sum_{p \leq z \atop p \in \mc{P}'} \frac{1}{p} - \sum_{p \leq z \atop p \in \mc{P}'} \frac{1}{p^{\sg}} = \sum_{p \leq z \atop p \in \mc{P}'} \frac{p^{\sigma - 1}-1}{p} = \sum_{l \geq 1} \frac{\eta^l}{(\log z)^l l!}\sum_{p \leq z \atop p \in \mc{P}'} \frac{\log^{l} p}{p} =: R(\eta).
\end{equation}
Clearly, $R(\eta) > 0$. By partial summation with the Prime Number Theorem, we have
\begin{equation} \label{REMHERE}
\sum_{p \leq z} \frac{\log^l p}{p} = \int_{2^-}^z \log^{l} t \frac{dt}{\log t} + O\left(\int_{2^-}^z \log^{l-1} t \frac{dt}{\log t}\right) = \frac{1}{l} \log^{l} z + O(\log^{l-1} z),
\end{equation}
so $R(\eta) \leq \sum_{l \geq 1} \frac{\eta^l}{(l+1)!} = \eta^{-1}(e^{\eta}-1-\eta)$, as claimed. Inserting \eqref{REMHERE} into \eqref{ESTHERE} proves the first statement.\\
For the second assertion, we clearly have, again by partial summation with the prime number theorem
\begin{align*}
\sum_{p \in \mc{P}' \atop p > z} \frac{1}{p^{\sg}} &\leq 2\int_{z}^{\infty} e^{-\eta\frac{\log t}{\log z}}\frac{dt}{t\log t} = 2\int_{\eta}^{\infty} e^{-u}\frac{du}{u} \leq \frac{2}{\eta}\int_{0}^{\infty} e^{-u}du = \frac{2}{\eta}.
\end{align*}
\end{proof}
We shall frequently need the following standard facts, which we prove for completeness.
\begin{lem} \label{LAURENTZETA}
i) Given $0 < \sg-1 < 1$, we have $\sum_{p} \frac{\log p}{p^{\sg}} = \frac{1}{\sg-1} + O(1)$ and $\sum_{p} \frac{\log^2 p}{p^{\sg}} = \frac{1}{(\sg-1)^2} + O(1)$. \\
ii) For $q \geq 2$ and $0 \leq a \leq q-1$, $\sum_{p \equiv a \text{ (mod $q$)}} \frac{\log p}{p^{\sg}} = \frac{1}{\phi(q)}\frac{1}{\sg-1} + O(1)$ and $\sum_{p \equiv a \text{ (mod $q$)}} \frac{\log p}{p^{\sg}} = \frac{1}{\phi(q)}\frac{1}{(\sg-1)^2} + O(1)$.
\end{lem}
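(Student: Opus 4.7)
The plan is to reduce both parts to the Laurent expansions of $-\zeta'/\zeta$ (and the analogous logarithmic derivatives of Dirichlet $L$-functions) at $s=1$, combined with the routine observation that the contribution of higher prime powers $p^k$, $k\geq 2$, is $O(1)$ uniformly on any half-plane $\text{Re}(s)\geq \sigma_0 > 1/2$.

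For part (i), I would begin from the identity
\[
-\frac{\zeta'(s)}{\zeta(s)} \;=\; \sum_{n\geq 1}\frac{\Lambda(n)}{n^s} \;=\; \sum_p \frac{\log p}{p^s} \,+\, \sum_p \sum_{k\geq 2}\frac{\log p}{p^{ks}},
\]
where the inner double sum is bounded by a geometric tail and hence $O(1)$ for $\sigma\geq 3/4$, say. From the Laurent expansions $\zeta(s)=(s-1)^{-1}+\gamma+O(s-1)$ and $\zeta'(s)=-(s-1)^{-2}+O(1)$, dividing the two series formally yields $-\zeta'(s)/\zeta(s)=(s-1)^{-1}+O(1)$, which proves the first asymptotic. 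For the second, differentiate once more: $\left(-\zeta'/\zeta\right)'(s)=(s-1)^{-2}+O(1)$ equals $\sum_n \Lambda(n)\log(n)\,n^{-s}$, and since $\Lambda(n)\log n=k\log^2 p$ on $n=p^k$, the terms with $k\geq 2$ again sum to $O(1)$, leaving $\sum_p \log^2 p/p^s = (s-1)^{-2}+O(1)$.

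For part (ii), I would invoke orthogonality of Dirichlet characters modulo $q$: when $(a,q)=1$,
\[
\sum_{p\equiv a \pmod q}\frac{\log p}{p^s}\;=\;\frac{1}{\phi(q)}\sum_{\chi\bmod q}\bar\chi(a)\sum_p \frac{\chi(p)\log p}{p^s},
\]
and each inner sum differs by $O(1)$ from $-L'(s,\chi)/L(s,\chi)$. Because $L(s,\chi_0)=\zeta(s)\prod_{p\mid q}(1-p^{-s})$ inherits the simple pole of $\zeta$ at $s=1$, we have $-L'/L(s,\chi_0)=(s-1)^{-1}+O_q(1)$; for every non-principal $\chi$, $L(s,\chi)$ is holomorphic at $s=1$ with $L(1,\chi)\neq 0$ by Dirichlet's theorem, hence $-L'/L(s,\chi)=O_q(1)$. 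Only $\chi_0$ therefore contributes a pole, and its coefficient $\bar\chi_0(a)/\phi(q)=1/\phi(q)$ yields the claimed main term. The $\log^2 p$ analogue follows by differentiating once more. The case $(a,q)>1$ is trivial since at most one prime is then summed. The only substantive check in the whole argument is that dividing the Laurent expansions for $\zeta'$ and $\zeta$ produces an $O(1)$, rather than $O((s-1)^{-1})$, remainder — immediate from Taylor-expanding the ratio of the two bracketed analytic factors — so no real obstacle arises.
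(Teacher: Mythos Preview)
Your argument is correct and essentially coincides with the paper's own proof: both pass from $\sum_p (\log p)/p^{\sigma}$ to $-\zeta'/\zeta$ by discarding the $O(1)$ contribution of higher prime powers, read off the Laurent expansion at $s=1$, differentiate for the $\log^2 p$ sum, and then for part (ii) use orthogonality of characters together with the holomorphy and non-vanishing of $L(s,\chi)$ at $s=1$ for $\chi\neq\chi_0$. The only minor slip is a sign in the sentence on $(-\zeta'/\zeta)'$ (the derivative of $(s-1)^{-1}$ is $-(s-1)^{-2}$), but the conclusion you draw is nonetheless correct.
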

\begin{proof}
i) From II.3 of \cite{Ten2}, $\zeta(\sg) = \frac{1}{\sg-1} + h_1(\sg)$, where $h_1(\sg)$ is differentiable in $\sg$ at $\sg = 1$. It follows that
\begin{equation*}
\log\left(\zeta(\sg)\right) = -\sum_p \log\left(1-p^{-\sg}\right) = \sum_p \frac{1}{p^{\sg}} + h_2(\sg)
\end{equation*}
for $h_2$ a continuously differentiable function in $\sg$ at $\sg = 1$. Differentiating with respect to $\sg$, we see that
\begin{equation*}
\sum_p \frac{\log p}{p^{\sg}} = -\frac{\zeta'}{\zeta}(\sg) -h_2'(\sg),
\end{equation*}
and since $h_2'$ is bounded near $\sg = 1$ and $\frac{\zeta'}{\zeta}(\sg) = -\frac{1}{\sg-1} + (h_1'(\sg) - h_2'(\sg))(\sg-1)$, the first conclusion follows. The second one follows from the first by differentiation with respect to $\sg$.\\
ii) By orthogonality of Dirichlet characters mod $q$,
\begin{equation*}
\sum_{p \equiv a \text{ (mod $q$)}} \frac{\log p}{p^{\sg}} = \frac{1}{\phi(q)}\sum_{\chi} \overline{\chi}(a)\sum_p \frac{\chi(p)\log p}{p^{\sg}} = -\frac{1}{\phi(q)}\sum_{\chi} \overline{\chi}(a) \left(\frac{L'(\sg,\chi)}{L(\sg,\chi)}+O(1)\right),
\end{equation*}
the last estimate following in a manner similar to i), with $L(s,\chi) := \sum_{n \geq 1} \chi(n)n^{-s}$ for $\text{Re}(s) > 1$. It is well-known (see II.8 of \cite{Ten2}) that the Dirichlet series for $L(s,\chi)$ is well-defined and holomorphic for $\text{Re}(s) > 0$ whenever $\chi$ is not the trivial character $\chi_0$. Hence, $\frac{L'(\sg,\chi)}{L(\sg,\chi)} = O(1)$ for $\chi \neq \chi_0$, and thus,
\begin{equation*}
\sum_{p \equiv a \text{ (mod $q$)}} \frac{\log p}{p^{\sg}} = -\frac{1}{\phi(q)}\left(\frac{L'(\sg,\chi_0)}{L(\sg,\chi_0)} + O(1)\right) + O(1).
\end{equation*}
Since $\chi(p) = 0$ if $p | q$ and is equal to 1 otherwise, 
\begin{equation*}
-\frac{L'(\sg,\chi_0)}{L(\sg,\chi_0)} = \sum_{p \nmid q} \frac{\log p}{p^{\sg}} + O(1) = \sum_{p} \frac{\log p}{p^{\sg}} + O\left(\log q\right) = -\frac{\zeta'}{\zeta}(\sg) + O\left(\log q\right).
\end{equation*}
Since $\phi(q) \gg q^{1-\e}$ for any $\e > 0$ (see I.5 in \cite{Ten2}), we have by i),
\begin{equation*}
\sum_{p \equiv a \text{ (mod $q$)}} \frac{\log p}{p^{\sg}} = \frac{1}{\phi(q)}\left(\frac{1}{\sg-1} + O\left(\log q\right)\right) + O(1) = \frac{1}{\phi(q)}\frac{1}{\sg-1} + O(1).
\end{equation*}
The second estimate follows similarly, using $\frac{d}{d\sg}\left(\frac{L'(\sg,\chi)}{L(\sg,\chi)}\right)$ in place of $\frac{L'(\sg,\chi)}{L(\sg,\chi)}$.
\end{proof}
We shall also require the following standard technical results first, whose proofs are also included for completeness.
\begin{lem} \label{SIMPLE}
Let $z > 1$ and $\sg > 1$. Then the following estimates hold:
\begin{align}
&\sum_p \frac{\log^l p}{p^{\sg}} \asymp (l-1)!(\sg-1)^{-l} \ \forall l \in \N \label{POWERS};\\
&\sum_{p > z} \frac{\log^l p}{p^{\sg}} \asymp \sum_{0 \leq j \leq l-1} \frac{(l-1)!}{j!} \frac{\log^j z}{(\sg-1)^{l+1-j}}; \label{LOG}\\
&\sum_{p > z} \frac{1}{p^{l\sg}} \ll \frac{1}{l(1+z)^{l\sg-1} \log (z+1)} \label{P2};\\
&\sum_{p > z} \frac{\log^l p}{p^{2\sg}} \ll \frac{\log^{l-1}(1+z)}{z^{2\sg-1}} \label{L2}.
\end{align}
\end{lem}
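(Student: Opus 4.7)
The plan is to derive each of the four bounds by partial summation against Chebyshev-type estimates for the prime-counting functions, reducing every prime sum to an integral which is then evaluated by a substitution of the form $u = (a\sg - b)\log t$ for appropriate integers $a,b$; this converts the integral into a (possibly truncated) Gamma integral with a standard closed form.

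For \eqref{LOG}, I would write
$$\sum_{p > z}\frac{\log^l p}{p^{\sg}} = \int_z^\infty \frac{\log^{l-1} t}{t^{\sg}}\, d\theta(t),$$
where $\theta(t) := \sum_{p\leq t}\log p \asymp t$ by the Prime Number Theorem (Chebyshev's estimate suffices for a two-sided asymptotic). A short Abel summation shows that this is, up to a boundary contribution of order $\log^{l-1} z \cdot z^{1-\sg}$, two-sidedly comparable to $\int_z^\infty \log^{l-1}t/t^{\sg}\,dt$. The substitution $u = (\sg - 1)\log t$ transforms the latter into $(\sg-1)^{-l}\int_{(\sg-1)\log z}^\infty u^{l-1}e^{-u}\,du$, and the standard closed form
$$\int_x^\infty u^{l-1}e^{-u}\,du = (l-1)!\, e^{-x}\sum_{j=0}^{l-1}\frac{x^j}{j!},$$
applied at $x = (\sg-1)\log z$, then delivers the right-hand side of \eqref{LOG} after absorbing the bounded factor $z^{-(\sg-1)}$ into the implied constant. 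The estimate \eqref{POWERS} is obtained as the special case $z = 2$ of \eqref{LOG}: for $\sg$ close to $1$ the $j=0$ term dominates the resulting finite sum, yielding $(l-1)!(\sg-1)^{-l}$ up to constants.

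For \eqref{P2}, I would apply Chebyshev's bound $\pi(t) \ll t/\log t$ together with Abel summation:
$$\sum_{p > z}\frac{1}{p^{l\sg}} \leq l\sg\int_z^\infty \frac{\pi(t)}{t^{l\sg + 1}}\,dt \ll l\sg \int_z^\infty \frac{dt}{t^{l\sg}\log t}.$$
The substitution $u = (l\sg - 1)\log t$ collapses the remaining integral to $\int_{(l\sg-1)\log z}^\infty e^{-u}/u\, du$, which is bounded by $e^{-x}/x$ at $x = (l\sg-1)\log z$, producing a bound of order $l\sg/((l\sg-1)(1+z)^{l\sg-1}\log(z+1))$. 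The factor $1/l$ in the target is then extracted via the elementary inequality $l\sg - 1 \geq l/2$, valid for $l\geq 2$ and $\sg \geq 1$. Estimate \eqref{L2} is treated symmetrically: apply Abel summation to $\sum_{p > z}\log^l p/p^{2\sg} = \int_z^\infty \log^{l-1}t/t^{2\sg}\,d\theta(t)$, use $\theta(t)\asymp t$, substitute $u = (2\sg - 1)\log t$, and retain the $j = l-1$ term in the incomplete Gamma expansion (which dominates whenever $(2\sg-1)\log z$ is bounded below, as holds in this regime since $\sg > 1$) to obtain the upper bound $\log^{l-1}(1+z) \cdot z^{1-2\sg}$ up to constants.

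None of the individual ingredients is deep; the main bookkeeping task is ensuring that the factor $1/l$ in \eqref{P2} genuinely survives the integral estimate, which relies on the inequality $l\sg - 1 \geq l/2$ in the indicated range. A secondary cosmetic point is the replacement of $\log z$ and $z$ by $\log(z+1)$ and $1+z$ in the stated conclusions, which is justified by observing that each pair differs by at most a multiplicative constant for $z$ bounded away from zero.
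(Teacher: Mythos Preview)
Your proposal is correct and follows essentially the same route as the paper: partial summation against a Chebyshev/PNT estimate, the substitution $u=(\text{const})\log t$, and evaluation via the incomplete Gamma expansion $\int_x^\infty u^{l-1}e^{-u}\,du=(l-1)!\,e^{-x}\sum_{j\le l-1}x^j/j!$. The only cosmetic differences are that you summate against $\theta(t)$ rather than $\pi(t)$, and you deduce \eqref{POWERS} as the $z=2$ case of \eqref{LOG} whereas the paper treats it first and separately; you are also more explicit than the paper about the restriction $l\ge 2$ needed to extract the factor $1/l$ in \eqref{P2} via $l\sg-1\ge l/2$.
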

\begin{proof}
All of these statements follow by partial summation from the Prime Number Theorem. Indeed,
\begin{align*}
\sum_p \frac{\log^l p}{p^{\sg}} &\asymp \int_2^{\infty} \log^l t e^{-(\sg-1)\log t}\frac{dt}{t\log t}  = \int_{\log 2}^{\infty} u^{l-1} e^{-(\sg-1)u} du \\
&\asymp \frac{1}{(\sg-1)^l}\int_0^{\infty}u^{l-1}e^{-u}du = \Gamma(l)(\sg-1)^{-l}.
\end{align*}
Given that $\Gamma(l) = (l-1)!$ for $l \in \N$, the first result follows immediately. For the second,
\begin{align*}
\sum_{p > z} \frac{\log^l p}{p^{\sigma}} &= \left(1+O\left(\frac{1}{\log z}\right)\right) \int_z^{\infty} \frac{\log^l t}{t^{\sigma}} \frac{dt}{\log t} = \left(1+O\left(\frac{1}{\log z}\right)\right) \int_{\log z}^{\infty} du u^{l-1}e^{-(\sg-1)u} \\
&\asymp  \sum_{0 \leq j \leq l-1} \frac{(l-1)!\log^j z}{j!(\sg-1)^{l+1-j}}
\end{align*}
by repeated use of integration by parts. For the third,
\begin{equation*}
\sum_{p > z} \frac{1}{p^{l\sg}} \asymp \int_{\log z}^{\infty} \frac{du}{u} e^{-(l\sg-1)u}  \ll \frac{1}{\log z} \int_{\log z}^{\infty} e^{(1-l\sg)u} du \ll \frac{1}{lz^{l\sg-1} \log z}.
\end{equation*}
Finally, again by partial summation and integration by parts,
\begin{equation*}
\sum_{p > z} \frac{\log^l p}{p^{2\sg}} \ll \int_{\log z}^{\infty} u^{l-1} e^{-(2\sg - 1)u} du \ll l\frac{\log^{l-1}z}{(2\sg - 1)z},
\end{equation*}
arguing as in the proof of \eqref{LOG}, and noting that $2\sg-1 > 1$.
\end{proof}
In Section 5 in particular we will make use of the ratios
\begin{align*}
\alpha_j(Y) &:= (\sg-1)\sum_{p \in E_j \atop p \leq Y} \frac{\log p}{p^{\sg}}, \ \alpha_j := (\sg-1)\sum_{p \in E_j} \frac{\log p}{p^{\sg}}; \\
\beta_j(Y) &:= (\sg-1)\sum_{p \in E_j \atop p \leq Y} \frac{\log^2 p}{p^{\sg}}, \ \beta_j := (\sg-1)^2\sum_{p \in E_j} \frac{\log^2 p}{p^{\sg}} \\
\gamma_j(Y) &:= (\sg-1)\sum_{p \in E_j \atop p \leq Y} \frac{\log^3 p}{p^{\sg}}, \ \gamma_j := \frac{1}{2}(\sg-1)^3\sum_{p \in E_j} \frac{\log^3 p}{p^{\sg}}
\end{align*}
defined for each $0 \leq j \leq n$ and $Y \geq 2$. We will also write $\alpha_{j,\sg}$, $\beta_{j,\sg}$ or $\gamma_{j,\sg}$ to denote the dependence on $\sg$. Clearly, $\alpha_j = \lim_{Y \ra \infty} \alpha_j(Y)$, the same being true of $\beta_j$ and $\gamma_j$ as limits of $\beta_j(Y)$ and $\gamma_j(Y)$, respectively. Since the sets $E_j$ partition $\mc{P}$, the sum over all $j$ of the $\alpha_j$, or the $\beta_j$ or of the $\gamma_j$ are all asymptotically 1 by Lemma \ref{LAURENTZETA}.  We will require the following relationships between these ratios.
\begin{lem} \label{RATIOS}
Let $\sg > 1$ and define $\sg' := 1+\frac{1}{2}(\sg-1)$. For each $j$, $\gamma_j \ll \beta_j \log\left(\frac{1}{\beta_j}\right) \ll \alpha_j\log^2\left(\frac{1}{\alpha_j}\right)$; also, $\gamma_{j,\sg} \ll \beta_{j,\sg} + \beta_{j,\sg'}$.  In particular, hypothesis $\mathbf{H}_3(\sg)$ implies $\mathbf{H}_2(\sg)$. Moreover, $\alpha_j(Y) \ll \beta_j^{\frac{1}{2}}(Y)$, for $Y \gg e^{\frac{1}{\sg-1}}$, where the implicit constant is uniform in $Y$. 
\end{lem}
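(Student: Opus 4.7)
The plan is to establish the four claims in an order that foregrounds the key technical step, namely $\gamma_{j,\sg} \ll \beta_{j,\sg} + \beta_{j,\sg'}$, which drives the implication $\mathbf{H}_3(\sg) \Rightarrow \mathbf{H}_2(\sg)$. For this central inequality, I would split the sum $\sum_{p \in E_j} \log^3 p/p^{\sg}$ at the cutoff $Y = e^{2/(\sg-1)}$. On $p \leq Y$ the trivial bound $\log p \leq 2/(\sg-1)$ produces a contribution of order $\beta_{j,\sg}/(\sg-1)^3$. On $p > Y$ I would factor $p^{-\sg} = p^{-\sg'} p^{-(\sg-1)/2}$ and observe that the function $u \mapsto u e^{-(\sg-1)u/2}$ attains its maximum $2/(e(\sg-1))$ at $u = 2/(\sg-1)$; since $\log p \geq \log Y = 2/(\sg-1)$ in the relevant range, $\log p/p^{(\sg-1)/2} \leq 2/(e(\sg-1))$, and pulling this factor out of the sum leaves a contribution of order $\beta_{j,\sg'}/(\sg-1)^3$. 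Multiplying through by $(\sg-1)^3/2$ delivers the claim.

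To deduce $\mathbf{H}_3 \Rightarrow \mathbf{H}_2$, I would combine the previous step with the auxiliary estimate $\beta_{j'',\sg'} \ll \beta_{j'',\sg}$, proved by splitting $\sum_{p \in E_{j''}} \log^2 p/p^{\sg'}$ at $Y_0 = e^{1/(\sg'-1)}$. For $p \leq Y_0$ one has $p^{\sg-\sg'} = p^{(\sg-1)/2} \leq e$, hence $1/p^{\sg'} \leq e/p^{\sg}$, so this part is bounded by $e\,\beta_{j'',\sg}/(\sg-1)^2$. For $p > Y_0$, hypothesis $\mathbf{H}_3(\sg)$ is precisely the bound required. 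Adding both pieces yields $\beta_{j'',\sg'} \ll \beta_{j'',\sg}$, and then the first paragraph gives $\gamma_{j'',\sg} \ll \beta_{j'',\sg}$, which up to absolute constants is $\mathbf{H}_2(\sg)$.

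For $\alpha_j(Y) \ll \beta_j(Y)^{1/2}$, Cauchy--Schwarz gives
\begin{equation*}
\alpha_j(Y) \leq (\sg-1)^{1/2} \beta_j(Y)^{1/2} \Biggl(\sum_{p \in E_j, p \leq Y} \frac{1}{p^{\sg}}\Biggr)^{1/2}.
\end{equation*}
Lemma \ref{LemUnif} combined with Mertens' theorem yields $\sum_{p \leq Y} 1/p^{\sg} = O(\log(1/(\sg-1)))$ (and in fact the portion beyond $Y$ is uniformly bounded in $Y$ when $Y \gg e^{1/(\sg-1)}$), and since $(\sg-1)\log(1/(\sg-1))$ is bounded for $\sg > 1$ the claim follows with an absolute constant.

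Finally, for the chain $\gamma_j \ll \beta_j \log(1/\beta_j) \ll \alpha_j \log^2(1/\alpha_j)$, each inequality is handled by the same split-and-optimize template. Splitting the defining sum at $Y$ with $\eta := (\sg-1)\log Y$, the small-prime sum contributes at most $\eta$ times the sum with one fewer $\log p$ factor, while Lemma \ref{SIMPLE} bounds the tail by $e^{-\eta}$ times a polynomial in $\eta$ with the same prefactor of $\sg-1$ as the lower-power sum. Choosing $\eta \asymp \log(1/\beta_j)$ (respectively $\log(1/\alpha_j)$) balances the two contributions, produces $\gamma_j \ll \beta_j \log(1/\beta_j)$ and $\beta_j \ll \alpha_j \log(1/\alpha_j)$, and iterating via $\log(1/\beta_j) \leq \log(1/\alpha_j) + O(\log\log(1/\alpha_j))$ closes the chain. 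The hardest step is really the first paragraph: the cutoff $e^{2/(\sg-1)}$ and the maximum of $u e^{-(\sg-1)u/2}$ are exactly what let the cubic-log sum be absorbed into the squared-log sums at $\sg$ and $\sg'$; once that tool is in hand, everything else is routine.
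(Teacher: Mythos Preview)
Your proposal is correct and, for the bounds $\gamma_{j,\sg} \ll \beta_{j,\sg} + \beta_{j,\sg'}$, the implication $\mathbf{H}_3 \Rightarrow \mathbf{H}_2$, and the chain $\gamma_j \ll \beta_j\log(1/\beta_j) \ll \alpha_j\log^2(1/\alpha_j)$, it follows essentially the same split-at-a-threshold-and-optimize scheme as the paper. One cosmetic slip: to close the chain you write $\log(1/\beta_j) \leq \log(1/\alpha_j) + O(\log\log(1/\alpha_j))$, but what actually follows from $\alpha_j^2 \ll \beta_j$ is $\log(1/\beta_j) \leq 2\log(1/\alpha_j) + O(1)$; this still gives the desired conclusion.

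The one place you take a genuinely different route is the inequality $\alpha_j(Y) \ll \beta_j(Y)^{1/2}$. The paper argues via a ``median'' cutoff: it defines $N_j(Y)$ as the smallest $t$ for which $(\sg-1)\sum_{p\in E_j,\,p\leq e^{t/(\sg-1)}}\log p/p^{\sg}\geq \tfrac{1}{2}\alpha_j(Y)$, bounds $\alpha_j(Y)\leq 3N_j(Y)$ by Mertens, and separately shows $\alpha_j(Y)\leq \tfrac{1}{2}\alpha_j(Y) + 2\beta_j(Y)/N_j(Y)$ by pulling out $\log p \geq N_j(Y)/2(\sg-1)$ on the upper half, whence $\alpha_j(Y)^2 \leq 12\beta_j(Y)$. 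Your Cauchy--Schwarz argument is shorter and in fact gives a bit more, namely $\alpha_j(Y)^2 \leq \bigl((\sg-1)\sum_{p\in E_j,\,p\leq Y} p^{-\sg}\bigr)\beta_j(Y) = o(\beta_j(Y))$ uniformly in $Y$, since $(\sg-1)\sum_p p^{-\sg} \ll (\sg-1)\log(1/(\sg-1))$. The paper's approach, on the other hand, yields an explicit absolute constant independent of $\sg$ without appealing to any estimate on $\sum_p p^{-\sg}$. Either way suffices for the lemma and for its uses downstream.
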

\begin{rem} \label{REM1}
We remark that the first and last sets of estimates are best possible.  Indeed, if we let $A > 1$ and set $E_j := \{p \in \mc{P} : p > e^{\frac{A}{\sg-1}}\}$ then it follows from Lemma \ref{SIMPLE} that $\beta_j \sim (1+A)e^{-A}$ while $\alpha_j \sim e^{-A}$, for any fixed $\sg > 1$. The first chain of estimates is thus best possible (since the first estimate is morally the same as the second one).  Also, if $E_j := \{p \in \mc{P} : p \leq \log\left(\frac{1}{\sg-1}\right)\}$ then $\alpha_j \sim (\sg-1)\log_2\left(\frac{1}{\sg-1}\right)$, while $\beta_j \sim (\sg-1)^2\log_2^2\left(\frac{1}{\sg-1}\right)$, so the last assertion is also best possible in this sense.
\end{rem}
\begin{proof}
We first prove $\beta_j \ll \alpha_j \log\left(\frac{1}{\alpha_j}\right)$. Observe that for $M \in \N$ fixed,
\begin{align*}
\beta_j &\leq (\sg-1)M \sum_{p \in E_j \atop p \leq e^{\frac{M}{\sg-1}}} \frac{\log p}{p^{\sg}} + (\sg-1)^2\sum_{p \in E_j \atop p > e^{\frac{M}{\sg-1}}} \frac{\log^2 p}{p^{\sg}}\\
&\leq M\alpha_j + (\sg-1)^2\sum_{p \in E_j \atop p > e^{\frac{M}{\sg-1}}} \frac{\log^2 p}{p^{\sg}}.
\end{align*}
By Lemma \ref{SIMPLE},
\begin{equation*}
\sum_{p > e^{\frac{M}{\sg-1}}} \frac{\log^2 p}{p^{\sg}} \ll (\sg-1)^{-2}e^{-M},
\end{equation*}
so that $\beta_j \leq M\alpha_j + e^{-M}$. Choosing $M := \llf \log\left(\frac{1}{\alpha_j}\right)\rrf$ gives $\beta_j \ll \alpha_j \log\left(\frac{1}{\alpha_j}\right)$. The proof that $\gamma_j \ll \beta_j \log\left(\frac{1}{\beta_j}\right)$ is similar, changing $(\sg-1)^j$ to $(\sg-1)^{j+1}$, $\log^j$ to $\log^{j+1}$, for $j \in \{1,2\}$. To prove $\gamma_j \ll \alpha_j \log^2\left(\frac{1}{\alpha_j}\right)$ requires the last estimate, so we shall prove this first.\\
Let $Y^+ := (\sg-1)\log Y$. Define 
\begin{equation*}
N_j(Y) := \inf\left\{ 0 < t < Y^+ : (\sg-1)\sum_{p \in E_j \atop p \leq e^{\frac{t}{\sg-1}}} \frac{\log p}{p^{\sg}} \geq \frac{1}{2}\alpha_j(Y)\right\}.  
\end{equation*}
Then we have
\begin{equation}\label{AM}
\alpha_j(Y) \leq 2(\sg-1)\sum_{p \in E_j \atop p \leq e^{\frac{N_j(Y)}{\sg-1}}} \frac{\log p}{p^{\sg}} \leq 2(\sg-1)\sum_{p \leq e^{\frac{N_j(Y)}{\sg-1}}} \frac{\log p}{p^{\sg}} \leq 2(\sg-1)\sum_{p \leq e^{\frac{N_j(Y)}{\sg-1}}} \frac{\log p}{p} \leq 3N_j(Y),
\end{equation}
this last estimate following by Mertens' (first) theorem, which is uniform in $Y$ (see I.2 in \cite{Ten2}). On the other hand, we have
\begin{align*}
\alpha_j(Y) &\leq (\sg-1)\sum_{p \in E_j \atop p \leq e^{\frac{N_j(Y)}{2(\sg-1)}}} \frac{\log p}{p^{\sg}} + (\sg-1)\sum_{p \in E_j \atop  e^{\frac{N_j(Y)}{2(\sg-1)}} < p \leq Y} \frac{\log p}{p^{\sg}} \\
&\leq \frac{1}{2}\alpha_j(Y) + \frac{2}{N_j(Y)}(\sg-1)^2\sum_{p \in E_j \atop p \leq Y} \frac{\log^2 p}{p^{\sg}} = \frac{1}{2}\alpha_j(Y) + \frac{2}{N_j(Y)}\beta_j(Y)
\end{align*}
It follows that $\alpha_j(Y) \leq \frac{4}{N_j(Y)}\beta_j(Y) \leq \frac{4}{N_j(Y)}\beta_j(Y)$. Combined with \eqref{AM}, this gives $\alpha_j^2(Y) \leq 12 \beta_j(Y)$, implying the second estimate. \\
From this last estimate with $Y \ra \infty$ we see that 
\begin{equation*}
\gamma_j \ll \beta_j \log\left(\frac{1}{\beta_j}\right) \ll \alpha_j \log\left(\frac{1}{\alpha_j}\right)\log\left(\frac{1}{\beta_j}\right) \ll \alpha_j \log^2 \left(\frac{1}{\alpha_j}\right).
\end{equation*}
Lastly, as $te^{-\frac{1}{2}t}$ is decreasing for $t > 2$, it follows that
\begin{align*}
(\sg-1)^3\sum_{p \in E_j \atop p > e^{\frac{2}{\sg-1}}} \frac{\log^3 p}{p^{\sg}} &= (\sg-1)^2\sum_{p \in E_j \atop p > e^{\frac{2}{\sg-1}}} \frac{\log^2 p}{p^{\sg'}} \left((\sg-1)\log pe^{-\frac{1}{2}(\sg-1)\log p}\right) \\
&\leq 8e^{-1}(\sg'-1)^2\sum_{p \in E_j \atop p > e^{\frac{2}{\sg-1}}} \frac{\log^2 p}{p^{\sg'}} \leq 8e^{-1}\beta_{j,\sg'},
\end{align*}
the last expression following because $\sg' - 1 = \frac{1}{2}(\sg-1)$ and $(\sg-1)\log p e^{-\frac{1}{2}(\sg-1)\log p} \leq 2e^{-1}$ for $p > e^{\frac{2}{\sg-1}}$.  As before, since $(\sg-1)^3\sum_{p \in E_j \atop p \leq e^{\frac{2}{\sg-1}}} \frac{\log^3 p}{p^{\sg}} \leq 2\beta_{j,\sg}$, the second claim follows as well. \\
Assuming hypothesis $\mathbf{H}_3(\sg)$, we have $\sum_{p \in E_{j''} \atop p > e^{\frac{2}{\sg-1}}} \frac{\log^2 p}{p^{\sg}} \ll \sum_{p \in E_{j''}} \frac{\log^2 p}{p^{\sg}}$, with the notation there. Since $(\sg'-1) \asymp (\sg-1)$ and, by the above proof, $\gamma_{j'',\sg} \ll \beta_{j'',\sg} + (\sg-1)^2\sum_{p \in E_{j''} \atop p > e^{\frac{2}{\sg-1}}} \frac{\log^2 p}{p^{\sg}}$, this last term is $\ll \beta_{j'',\sg}$ as well, which shows that $\gamma_{j'',\sg} \ll \beta_{j'',\sg}$. This is precisely $\mathbf{H}_2(\sg)$, by the definitions of $\beta_j$ and $\gamma_j$.
\end{proof} 
%
%
Furthermore, we will need to the following estimate related to the angular distribution of primes, i.e., the distribution of the values $\{\frac{t \log p}{2\pi}\}$ in $[0,1]$, where $t \in \mb{R}$ is a parameter. 
\begin{lem} \label{ANGLES}
Let $y \geq 2$. Let $t \in \R$ be such that $|t|\log y > 1$. Let $0 \leq \alpha < \beta \leq 1$, and define $\mc{I} := [\alpha,\beta] \subset [0,1]$.  Further, define $\theta_p(t) := \left\{\frac{t \log p}{2\pi}\right\}$, for each prime $p$. Then uniformly in $y$,
\begin{equation}
\sum_{p \leq y \atop \theta_p(t) \in \mc{I}} \frac{1}{p} = \left(1+O\left(\frac{1}{\log y} + \frac{1}{\log(|t|\log y)} \left(\frac{1}{|t|\log y} + 1\right)\right)\right)(\beta-\alpha) \log\left(|t|\log y\right) . \label{INTERVAL}
\end{equation}
\end{lem}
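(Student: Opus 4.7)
The plan is to reduce the weighted prime sum to a logarithmic integral via partial summation with the Prime Number Theorem, and then exploit the $1$-periodicity of the fractional-part function to evaluate the resulting integral explicitly. By the identity $\{-x\} = 1-\{x\}$, the condition $\theta_p(t) \in [\alpha,\beta]$ for $t<0$ is equivalent to $\theta_p(|t|) \in [1-\beta, 1-\alpha]$, an interval of the same length; so I may assume $t>0$. Applying partial summation against $d\pi(u)$, together with the Prime Number Theorem in the form $\pi(u) = \operatorname{Li}(u) + O(u e^{-c\sqrt{\log u}})$, and using the uniform boundedness of $\mathbf{1}_{\mc{I}}$, yields
\[
\sum_{\substack{p \leq y \\ \theta_p(t) \in \mc{I}}} \frac{1}{p} = \int_2^y \mathbf{1}\!\left[\left\{\frac{t\log u}{2\pi}\right\} \in \mc{I}\right]\frac{du}{u\log u} + O\!\left(\frac{1}{\log y}\right),
\]
where the non-smoothness of $\mathbf{1}_{\mc{I}}$ is accommodated by jump-term estimates in the Stieltjes integration; the resulting error is absorbed by the stated $1/\log y$ term in the relative error.

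Next, I perform the successive substitutions $v=\log u$ and $w = tv/(2\pi)$, each of which transports $du/(u\log u)$ first to $dv/v$ and then to $dw/w$. Writing $a := t\log 2/(2\pi)$ and $b := t\log y/(2\pi) > 1/(2\pi)$, the integral becomes $\int_a^b \mathbf{1}[\{w\}\in\mc{I}]\,dw/w$, which I evaluate by decomposing along the period structure of $\{w\}$. On $[1,b]$, periodicity yields a sum $\sum_{n=1}^{\lfloor b\rfloor-1}\int_{n+\alpha}^{n+\beta}dw/w = \sum_n \log((n+\beta)/(n+\alpha))$, and expanding each term as $(\beta-\alpha)/(n+\alpha) + O((\beta-\alpha)^2/(n+\alpha)^2)$ combined with the harmonic asymptotic $\sum_{n\leq N}1/(n+\alpha) = \log N + O(1)$ produces $(\beta-\alpha)\log b + O(\beta-\alpha)$. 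The boundary piece $\int_a^1 \mathbf{1}[w\in[\alpha,\beta]]\,dw/w$ (relevant when $a<1$) is a bounded contribution absorbed into the error, while at the upper end $\int_{\lfloor b\rfloor}^b$ contributes $O((\beta-\alpha)/b)$. Finally, writing $\log b = \log(|t|\log y) + O(1)$ produces the claimed main term $(\beta-\alpha)\log(|t|\log y)$ together with the stated relative error.

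The main obstacle is the uniform tracking of the two endpoint contributions: the PNT-related error in the Stieltjes integration, and the boundary piece from small primes where $\{w\} = w$ and periodicity is unavailable. Showing that the latter is at most $O((\beta-\alpha)(1 + 1/(|t|\log y)))$ uniformly in $\mc{I}$, including cases where $\mc{I}$ lies near $0$, is what makes the hypothesis $|t|\log y > 1$ essential: it guarantees $b > 1/(2\pi)$ so that the periodic range $[1,b]$ is non-empty and furnishes the claimed main term, while simultaneously keeping the error factor $1/\log(|t|\log y)\cdot(1 + 1/(|t|\log y))$ smaller than $1$.
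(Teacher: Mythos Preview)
Your approach is essentially identical to the paper's: partial summation with the Prime Number Theorem to reduce to $\int_2^y \chi_{\mc{I}}(\{t\log u/2\pi\})\,du/(u\log u)$, then the substitutions $v=\log u$ and $w=tv/(2\pi)$, followed by a decomposition of the resulting integral into unit-length pieces indexed by $1\leq k\leq M=\lfloor |t|\log y/(2\pi)\rfloor$ and evaluated via the harmonic-sum asymptotic. Your treatment is in fact slightly more explicit than the paper's in two places: you reduce $t<0$ to $t>0$ by the symmetry $\{-x\}=1-\{x\}$, and you flag the lower boundary piece $\int_a^1$ as the genuine obstacle, whereas the paper absorbs it into an unexplained $O(1)$ that is then quietly dropped.
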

\begin{rem}\label{REM2}
We remark that, according to the proof to follow, the implicit constant in front of the term $\frac{1}{\log\left(|t|\log y\right)}$ is at most 7.  This will be important in the context of Lemma \ref{LemSmallTrunc}.
\end{rem}
\begin{proof}
Let $\chi_{\mc{I}}$ denote the characteristic function of the interval $\mc{I}$. By partial summation,
\begin{align*}
\sum_{p \leq y \atop \theta_p(t) \in \mc{I}} \frac{1}{p} &= \left(1+O\left(\frac{1}{\log y}\right)\right)\int_{2}^y \chi_{\mc{I}}\left(\left\{\frac{t \log u}{2\pi}\right\}\right) \frac{du}{u\log u} \\
&= \left(1+O\left(\frac{1}{\log y}\right)\right)\int_{\log 2}^{\log y} \chi_{\mc{I}}\left(\left\{\frac{tu}{2\pi}\right\}\right) \frac{du}{u}.
\end{align*}
Define $M := \llf \frac{t\log y}{2\pi} \rrf$. We make the change of variables $u \mapsto \frac{tu}{2\pi}$ and restrict the integral to $[1,M]$, which introduces an error of size at most $\frac{1}{|t|\log y}$. Decomposing this integral as a sum of integrals of unit length gives
\begin{align*}
\sum_{p \leq y \atop \theta_p(t) \in \mc{I}} \frac{1}{p} &= \left(1+O\left(\frac{1}{\log y}\right)\right)\sum_{1 \leq k \leq M} \int_{k+\alpha}^{k+\beta} \frac{du}{u} + O(1) = \left(1+O\left(\frac{1}{\log y}\right)\right)\sum_{1 \leq k \leq M} \int_{\alpha}^{\beta} \frac{du}{u+k} \\
&= \left(1+O\left(\frac{1}{\log y}\right)\right)\sum_{1 \leq k \leq M} \left(\frac{1}{k} \int_{\alpha}^{\beta} du - \int_{\alpha}^{\beta}\frac{u du}{k(u+k)}\right) .
\end{align*}
It is well-known (see, for instance, I.2 in \cite{Ten2}) that $\left|\sum_{1 \leq k \leq m} \frac{1}{k} - \log m - \gamma\right| \leq \frac{1}{m}$, where $\gamma$ is the Euler-Mascheroni constant. As $\int_{\alpha}^{\beta} \frac{u}{k(u+k)} du \leq \frac{1}{2k^2}(\beta^2-\alpha^2)$, we have
\begin{align*}
\sum_{p \leq y \atop \theta_p(t) \in \mc{I}} \frac{1}{p}&= \left(1+O\left(\frac{1}{\log y}\right)\right)(\beta-\alpha) \left(\log M  + \gamma +O\left(\frac{1}{M} + (\beta+\alpha)\right)\right) \\
&= \left(1+O\left(\frac{1}{\log y} + \frac{1}{\log(|t|\log y)} \left(\frac{1}{|t|\log y} + \beta+\alpha\right)\right)\right)(\beta-\alpha)\log M.
\end{align*}
As $\beta+\alpha \leq 2$, and $|\log M - \log(|t|\log y)| \leq 2\log(2\pi)$, the claim follows with an error at most $\leq \gamma + 2 + 2\log(2\pi) \leq 7$ in front of $\frac{1}{|t|\log Y}$.
\end{proof}
Lastly, we will need, in a couple of places in Lemma \ref{LemAppTrunc}, the following simple integral estimate.
\begin{lem}\label{GenIntEst}
Let $\gamma \geq 2$ and suppose $u > 0$.  Then
\begin{equation*}
\int_u^{\infty} dt(1+t^2)^{-\gamma} \leq \sqrt{\frac{\pi^2}{2\gamma}}\left(1+u^2\right)^{-\frac{\gamma}{2}}.
\end{equation*}
\end{lem}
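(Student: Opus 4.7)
The plan is to reduce the tail integral to a universal integral independent of $u$ by a shift-and-rescale argument, and then to bound that universal integral by elementary means.

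First I would exploit the shift inequality $1+t^2 \geq (1+u^2) + (t-u)^2$, valid for $t \geq u \geq 0$ (obtained by writing $1+t^2 = (1+u^2) + 2u(t-u) + (t-u)^2$ and dropping the non-negative cross term). Factoring $(1+u^2)$ out of the right side and substituting $s = (t-u)/\sqrt{1+u^2}$ then yields
\begin{equation*}
\int_u^\infty (1+t^2)^{-\gamma}\,dt \leq (1+u^2)^{\frac{1}{2}-\gamma}\int_0^\infty (1+s^2)^{-\gamma}\,ds.
\end{equation*}
Since $\gamma \geq 2$ and $u \geq 0$ one has $(1+u^2)^{\frac{1}{2}-\gamma} \leq (1+u^2)^{-\gamma/2}$, so it suffices to show $I_\gamma := \int_0^\infty (1+s^2)^{-\gamma}\,ds \leq \pi/\sqrt{2\gamma}$. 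For this I would substitute $s = \tan\theta$ to obtain $I_\gamma = \int_0^{\pi/2}\cos^{2\gamma-2}\theta\,d\theta$, and invoke the elementary estimate $\cos\theta \leq e^{-\theta^2/2}$ on $[0,\pi/2]$ (which holds because $\varphi(\theta) := \log\cos\theta + \theta^2/2$ satisfies $\varphi(0) = \varphi'(0) = 0$ and $\varphi''(\theta) = 1-\sec^2\theta \leq 0$). This gives $I_\gamma \leq \int_0^\infty e^{-(\gamma-1)\theta^2}\,d\theta = \tfrac{1}{2}\sqrt{\pi/(\gamma-1)}$, and squaring shows that $\tfrac{1}{2}\sqrt{\pi/(\gamma-1)} \leq \pi/\sqrt{2\gamma}$ is equivalent to $\gamma(2\pi-1) \geq 2\pi$, which holds comfortably for $\gamma \geq 2$. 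Combining these estimates yields the stated inequality.

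The argument is entirely elementary, so there is no serious obstacle; the only ingredient that requires any thought is the opening shift inequality. The naive alternative $1+t^2 \geq 1+u^2$ reduces matters to bounding $\int_0^\infty (1+t^2)^{-\gamma/2}\,dt$, which is exactly tight at $\gamma = 2$ (both sides equal $\pi/2$) and therefore leaves no slack to absorb constants uniformly on $\gamma \in [2,3)$. The shift trick circumvents this precisely because it produces the integral $I_\gamma$ with the sharper exponent $-\gamma$, leaving ample room to meet the claimed bound for every $\gamma \geq 2$.
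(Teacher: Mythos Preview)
Your proof is correct, but it takes a different route from the paper's. The paper splits $(1+t^2)^{\gamma}$ as $(1+t^2)^{\gamma/2}\cdot(1+t^2)^{\gamma/2}$, bounds the first factor trivially below by $(1+u^2)^{\gamma/2}$ for $t\geq u$, and bounds the second factor below by $1+\tfrac{\gamma}{2}t^2$ via the Bernoulli inequality; a rescaling then reduces everything to $\int_0^\infty (1+\tau^2)^{-1}\,d\tau=\pi/2$, giving the bound in two lines. Your shift inequality $1+t^2\geq (1+u^2)+(t-u)^2$ instead pulls out the stronger factor $(1+u^2)^{1/2-\gamma}$ before you weaken it to $(1+u^2)^{-\gamma/2}$, and then you handle the remaining universal integral $I_\gamma$ by the tangent substitution and the Gaussian bound $\cos\theta\leq e^{-\theta^2/2}$. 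The paper's argument is shorter and avoids the Wallis/Gaussian detour; your argument, on the other hand, actually proves the sharper estimate $\int_u^\infty(1+t^2)^{-\gamma}\,dt\leq \tfrac{1}{2}\sqrt{\pi/(\gamma-1)}\,(1+u^2)^{1/2-\gamma}$ along the way, which could be useful if a tighter exponent were ever needed.
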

\begin{proof}
Observe that $(1+\tau^2)^{\frac{\gamma}{2}} \geq 1+\frac{\gamma}{2}\tau^2$ for $\tau > 0$, implying that $(1+\tau^2)^{\gamma} \geq (1+\frac{\gamma}{2}\tau^2)(1+\tau^2)^{\frac{\gamma}{2}}$.  Estimating trivially, $\left(1+\tau^2\right)^{\frac{\gamma}{2}} \geq (1+u^2)^{\frac{\gamma}{2}}$, for $\tau \geq u$, hence
\begin{equation*}
\int_u^{\infty} \frac{d\tau}{(1+\tau^2)^{\gamma}} \leq (1+u^2)^{-\frac{\gamma}{2}} \int_{u}^{\infty} \frac{d\tau}{1+\frac{\gamma}{2}\tau^2} \leq \sqrt{\frac{2}{\gamma}}(1+u^2)^{-\frac{\gamma}{2}}  \int_0^{\infty} \frac{d\tau}{1+\tau^2} = \sqrt{\frac{\pi^2}{2\gamma}}(1+u^2)^{-\frac{\gamma}{2}}.
\end{equation*}
\end{proof}
\section{Existence and Uniqueness of a Saddle Point}
Our first important objective, as outlined in Section 2, is to prove the existence and uniqueness of the parameter vector $(\mbf{\rho},\sg)$. 
\begin{lem} \label{LemEU}
Assume that each $k_j \geq 1$. Then there exists a unique solution vector $(\mbf{\rho},\sigma)$ such that the equations $f_{z_j}(\mathbf{\rho},\sigma) = \frac{k_j}{\rho_j}$ and $f_{s}(\mathbf{\rho},\sigma) = -\log x$ are simultaneously satisfied.
\end{lem}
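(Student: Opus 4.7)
The plan is to recast the two sets of saddle-point equations as the single critical-point condition of a strictly convex real-valued function, and then appeal to standard convex analysis. I will pass to logarithmic coordinates $u_j := \log \rho_j$, and on the open domain $\R^{n+1} \times (1,\infty)$ consider
\begin{equation*}
\Lambda(\mbf{u},\sg) := f(e^{u_0},\ldots,e^{u_n};\sg) + \sg \log x - \sum_{0 \leq j \leq n} k_j u_j.
\end{equation*}
Since $\partial_{u_j} f = \rho_j f_{z_j}$, the vanishing of $\nabla \Lambda$ is precisely equivalent to the two sets of equations stated in the lemma. It will therefore suffice to show that $\Lambda$ is smooth, strictly convex, and coercive on its domain; any such function on an open convex subset of $\R^{n+2}$ attains its infimum at a unique point, which is then the unique saddle point.

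To establish strict convexity I will decompose $f = \sum_{0 \leq j \leq n} f^{(j)}$, where $f^{(j)}(u_j,\sg) := \sum_{p \in E_j} \log(1+e^{u_j}/(p^{\sg}-1))$ depends only on $(u_j,\sg)$. The Hessian of $f$ in the coordinates $(u_0,\ldots,u_n,\sg)$ is then block-diagonal in the $u_j$-block, with only the $\sg$-row and column coupling the pieces, so a Schur-complement argument reduces positive definiteness of the full Hessian to that of each $2\times 2$ Hessian of $f^{(j)}$. The diagonal entries $A_j := \partial_{u_j}^2 f^{(j)} > 0$ and $C^{(j)} := \partial_{\sg}^2 f^{(j)} > 0$ follow by direct computation, the latter from the explicit expression for $f_{ss}$ already recorded in the paper. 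For the determinant bound $A_j C^{(j)} > B_j^2$, setting $q := p^{\sg}$ and $w_p := e^{u_j}/(p^{\sg}-1+\rho_j)^2$, I apply Cauchy-Schwarz to the decomposition $-B_j = \sum_{p \in E_j}(w_p(q-1))^{1/2}\cdot (w_p q^2(\log p)^2/(q-1))^{1/2}$ to obtain $B_j^2/A_j \leq \sum_p w_p q^2(\log p)^2/(q-1)$, while a termwise rearrangement of $C^{(j)}$ yields the identity
\begin{equation*}
C^{(j)} - \sum_{p} w_p q^2(\log p)^2/(q-1) = \sum_{p} w_p(\log p)^2 \bigl[q/(q-1) + qe^{u_j}/(q-1)^2\bigr] > 0.
\end{equation*}

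For coerciveness I will verify $\Lambda \to +\infty$ along each route to the boundary of the domain: as $u_j \to -\infty$, the hypothesis $k_j \geq 1$ makes $-k_j u_j$ the dominant unbounded term while $f \geq 0$; as $u_j \to +\infty$, the derivative $\partial_{u_j} f^{(j)} = \sum_{p \in E_j} e^{u_j}/(p^{\sg}-1+e^{u_j})$ tends to $|E_j| = \infty$, so $f^{(j)}$ grows faster than any linear function of $u_j$; as $\sg \to 1^+$, monotone convergence together with $\sum_{p} 1/p = \infty$ forces $f \to +\infty$; and as $\sg \to \infty$, the affine term $\sg \log x$ diverges while $f$ remains bounded. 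Combined with strict convexity, this yields a unique minimizer, and hence the unique saddle point claimed.

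The main obstacle will be the determinant estimate in the convexity step: a naive application of Cauchy-Schwarz produces only a non-strict inequality, and one must extract the explicit positive slack that ultimately stems from the identity $q^2-1 = (q-1)(q+1)$ (equivalently, from the $1/(p^{\sg}-1)$ correction in the paper's formula for $f_{ss}$). Once Hessian positivity is in place, the coerciveness checks are routine consequences of the divergence of Mertens' series and the prime-counting estimates already used in Lemmas \ref{LemUnif} and \ref{LAURENTZETA}.
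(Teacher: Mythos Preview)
Your proposal is correct and takes a genuinely different route from the paper's argument.

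The paper separates existence and uniqueness. For existence it fixes $\sg$, solves the equations $\rho_jf_{z_j}=k_j$ for $0\le j\le n-1$ by monotonicity to obtain continuous functions $\rho_j(\sg)$, determines $\rho_n(\sg)$ from $f_s=-\log x$, and then runs an intermediate-value argument in $\sg$ on the remaining equation $\rho_nf_{z_n}=k_n$, exhibiting explicit endpoints $\sg_1,\sg_2$ where that quantity is respectively too large and too small. For uniqueness it computes the Jacobian of $g(\mbf{\rho},\sg)=(\rho_0f_{z_0},\ldots,\rho_nf_{z_n},f_s)$ and checks its determinant is nonzero (essentially your Schur complement $f_{ss}-\sum_j\rho_jf_{sz_j}^2/(f_{z_j}+\rho_jf_{z_jz_j})$ again, but with a sign analysis rather than your Cauchy--Schwarz inequality). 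Your approach packages both steps into the single observation that $\Lambda$ is strictly convex and coercive; it is more conceptual, and your Cauchy--Schwarz-plus-slack computation is tighter than the paper's somewhat ad hoc sign estimate for the Schur complement. What the paper's argument buys in exchange is the explicit range $\sg\in(\sg_2,\sg_1]$ as an immediate byproduct, which feeds into Lemma~\ref{PARAMS}; your method would have to recover such bounds separately.

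One point to tighten in your sketch: coerciveness as $\sg\to\infty$ with some $u_j\to+\infty$ simultaneously is not covered by the four separate limits you list (the claim ``$f$ remains bounded as $\sg\to\infty$'' uses $\mbf{u}$ bounded). The clean fix is to use, for each $j$, the lower bound
\[
f^{(j)}(u_j,\sg)\ \ge\ \sum_{i=1}^{k_j}\log\!\Big(1+\frac{e^{u_j}}{p_i^{\sg}-1}\Big)\ \ge\ k_ju_j-\sg\sum_{i=1}^{k_j}\log p_i,
\]
over the $k_j$ smallest primes $p_1<\cdots<p_{k_j}$ of $E_j$, which combined with $f^{(j)}\ge 0$ yields $f^{(j)}-k_ju_j\ge -\sg\sum_{i\le k_j}\log p_i$ uniformly in $u_j$; hence $\Lambda\ge\sg\big(\log x-\sum_j\sum_{i\le k_j}\log p_i\big)\to+\infty$ provided $\prod_j\prod_{i\le k_j}p_i<x$. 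This is exactly the standing hypothesis (also used, implicitly, in the paper's proof) that some integer $m\le x$ has $\omega_{E_j}(m)=k_j$ for every $j$; you should record it explicitly.
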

\begin{proof}
Let $y := \exp\left(\frac{\log_2 x}{\xi_1(x)}\right)$ and $z := \text{exp}\left(\xi_2(x)\log x \log_2 x\right)$ for some functions $\xi_1$ and $\xi_2$ satisfying $\xi_j(x) \ra \infty$ as $x \ra \infty$. Set $\sg_1 = 1+\frac{1}{\log y}$ and $\sg_2 := 1+\frac{1}{\log z}$. Without loss of generality, assume that $n$ is such that $\sum_{p \in E_j} \frac{\log p}{p^{\sg_2}} \geq \frac{1}{n+1} \frac{1}{\sg_2-1}$, by Lemma \ref{LAURENTZETA}. We begin by observing that by fixing $\sigma > 1$ we can choose real-valued functions $(\rho_0(\sigma),\ldots,\rho_{n-1}(\sigma),\rho_n(\sigma))$ such that
\begin{equation}
\sum_{0 \leq j \leq n} \rho_j(\sigma) \sum_{p \in E_j} \frac{p^{\sigma}\log p}{(p^{\sigma}-1)(p^{\sigma} - 1 + \rho_j(\sigma))} = \log x \label{LOGS},
\end{equation}
and, simultaneously, for each $0 \leq j \leq n-1$,
\begin{align}
\rho_j(\sigma)\sum_{p \in E_j} \frac{1}{p^{\sigma} -1 + \rho_j(\sigma)} &= k_j \label{NONLOGS}.
\end{align}
That a unique value of $\rho_j(\sg)$ exists for each $\sg$ follows because the left hand side of \eqref{NONLOGS} is monotone in $\rho_j$, by straightforward differentiation. The choices of $\rho_j(\sigma)$ depend continuously on $\sigma$ in each of the equations parametrized by $0 \leq j \leq n-1$, and $\rho_n(\sigma)$ may be deduced from the remaining ones as a continuous function as well.  
We note that \eqref{LOGS} can be rewritten as
\begin{align}
\sum_{0 \leq j \leq n} \rho_j(\sg) \sum_{p \in E_j} \frac{\log p}{p^{\sg}-1+\rho_j} &= \left(1+O\left(\frac{1}{\log x}\sum_{0 \leq j \leq n} \rho_j\right)\right) \log x \nonumber\\
&= \left(1+O\left(\frac{1}{\log_2 x}\right)\right)\log x \label{LOGS2},
\end{align}
since, by the assumption that $E_j(t) \ra \infty$ as $t \ra \infty$, \eqref{NONLOGS} implies that $\rho_j \ll k_j \ll \frac{\log x}{\log_2 x}$ when $\sg$ is sufficiently small. We wish to check that for some choice of $\sigma$, the additional constraint $k_n = \rho_n(\sigma) \sum_{p \in E_n} \frac{1}{p^{\sigma} - 1 + \rho_n(\sigma)}$ also holds, and to this end it suffices to show that for $\sg = \sg_2$ the sum in question is rather small, while for $\sg = \sg_1$, it exceeds $k_n$. By continuity, some solution $\sg \in (\sg_2,\sg_1]$ must exist.\\
To see that it is small, assume that $\sigma_2 := 1+\frac{1}{\log z}$. According to our choice of labelling,
\begin{equation*}
\sum_{p \in E_n} \frac{\log p}{p^{\sigma_2}} \geq \frac{1}{2(n+1)} \frac{1}{\sg_2-1} = \frac{1}{2(n+1)}\log z.
\end{equation*}
Note that $\rho(\sg_2) = o(\log z)$ because $\rho_n(\sg_2) \leq \log x$ and $\log x = o(\log z)$ by assumption. Thus, we have
\begin{align*}
\log x &\geq \frac{1}{2}\rho_n(\sigma_2) \sum_{p \in E_n} \frac{\log p}{p^{\sigma_2}-1+\rho_n(\sigma_2)} \\
&= \rho_n(\sigma_2)\sum_{p \in E_n} \frac{\log p}{p^{\sigma_2}} + \rho_n(\sigma_2)\sum_{p \in E_n \atop p \leq \rho_n(\sigma_2)^{1/\sigma_2}} \log p\left(\frac{1}{p^{\sigma_2}-1+\rho_n(\sigma_2)} - \frac{1}{p^{\sigma_2}}\right) \\
&= \rho_n(\sigma_2)\left(\sum_{p \in E_n} \frac{\log p}{p^{\sigma_2}} + O(\log \rho_n)\right) \geq \frac{\rho_n(\sigma_2)}{4(n+1)}(\log z),
\end{align*}
the second last estimate following from
\begin{equation*}
\rho_n(\sg)^2\sum_{p \in E_n \atop p \leq \rho_n^{\frac{1}{\sg}}} \frac{\log p}{p^{\sg}(p^{\sg}-1+\rho_j(\sg)} \ll \rho_n(\sg)\sum_{p \in E_n \atop p \leq \rho_n^{\frac{1}{\sg}}} \frac{\log p}{p} \ll \rho_n(\sg)\log\left(1+ \rho_n(\sg)\right),
\end{equation*}
 for $\sg > 1$, by Mertens' first theorem (\cite{Ten2}). It follows that $\rho_n(\sigma_2) \leq \frac{8(n+1)\log x}{\log z}$. In particular, when $x$ is sufficiently large in terms of $n$, we have
\begin{equation*}
\sum_{p \in E_n} \frac{\rho_n(\sigma_2)}{p^{\sigma_2}-1+\rho_n(\sigma_2)} \leq \frac{8(n+1)\log x}{\log z} \sum_{p \in E_n} \frac{1}{p^{\sigma_2}-1} \leq \frac{8(n+1)\log x \log_2 x}{\log z} \leq \frac{8(n+1)}{\xi_2(x)}.
\end{equation*}
Therefore, certainly,
\begin{equation*}
\sum_{p \in E_n} \frac{\rho_n(\sigma_2)}{p^{\sigma_2}-1+\rho_n(\sigma_2)} < 1 \leq k_n, 
\end{equation*}
for sufficiently large $x$, the last inequality being true by assumption.\\
Assume now that for each $\sigma \geq \sigma_2$ we have $\sum_{p \in E_n} \frac{\rho_n(\sigma) }{p^{\sigma}-1+\rho_n(\sigma)} < k_n$.  In particular, suppose $\sg = \sg_1$. It then follows that $\sum_{p \in E_n\atop p \leq y } \frac{\rho_n(\sigma_1) \log p}{p^{\sigma_1} - 1 + \rho_n(\sigma_1)} < k_n \log y$, and hence
\begin{align*}
\log x &\ll \sum_{0 \leq j \leq n} \rho_j(\sigma_1)\left(\sum_{p \in E_j} \frac{\log p}{p^{\sigma_1}-1+\rho_j(\sigma_1)} + \sum_{p \in E_j} \frac{\log p}{p^{\sigma_1} -1 +\rho_j(\sigma_1)}\right) \\
&< \left(\sum_{0 \leq j\leq n} k_j\right)\log y +  \sum_{0 \leq j \leq n} \rho_j(\sigma_1) \sum_{p \in E_j \atop p > y} \frac{\log p}{p^{\sigma_1} - 1 + \rho_j(\sigma_1)}.
\end{align*}
Since $\rho_j \ll k_j$, as mentioned above, we have
\begin{align*}
\sum_{0 \leq j \leq n} \rho_j(\sigma_1)\sum_{p \in E_j \atop p > y} \frac{\log p}{p^{\sigma_1}-1+\rho_j(\sigma_1)} &\ll \left(\sum_{0 \leq j \leq n} k_j\right)\sum_{0\leq j \leq n} \sum_{p \in E_j \atop p > y} \frac{\log p}{p^{\sigma_1} - 1} \asymp \left(\sum_{0 \leq j \leq n} k_j\right) \frac{y^{1-\sigma_1}}{\sigma_1 - 1} \\
&\asymp \left(\sum_{0 \leq j \leq n} k_j\right)\log y.
\end{align*}
where the second last expression comes from Lemma \ref{SIMPLE}. Suppose $y > \max_{0 \leq j \leq n} \rho_j \geq \max_{0 \leq j \leq n} \rho_j^{\frac{1}{\sg}}$ for $\sg > 1$. Since $\mbf{k}$ is a vector such that there exist an integer $m \leq x$ such that $k_0 + \ldots + k_n = \omega(m) \leq \frac{\log x}{\log_2 x}$, it follows that
\begin{equation*}
\log x \ll \left(\sum_{0 \leq j \leq n} k_j\right)\log y \ll \left(\frac{\log x}{\log_2 x}\right)\log y \ll \frac{\log x}{\xi_1(x)},
\end{equation*}
a contradiction for sufficiently large $x$. Thus, we must indeed have 
\begin{equation*}
\rho_n(\sigma_1) \sum_{p \in E_n} \frac{1}{p^{\sigma_1} - 1 + \rho_n(\sigma_1)} \geq k_n. 
\end{equation*}
Since $\rho_n$ is continuous, it follows that for some $\sigma \in (\sigma_2, \sigma_1]$, all of the equations are solvable, as claimed. \\
To show that $\sg$ is unique (whence $\rho_0,\ldots,\rho_n$ are uniquely determined) we consider the map 
\begin{equation*}
g(\mbf{\rho},\sg) := (\rho_0f_{\rho_0}(\mbf{\rho};\sg),\ldots,\rho_nf_{\rho_n}(\mbf{\rho};\sg),f_s(\mbf{\rho};\sg)), 
\end{equation*}
and claim that $g(\mbf{\rho},\sg)$ is invertible whenever $(\rho_0,\ldots,\rho_n) \in \mb{R}^{n+1}_+$ and $\sg \in (1,\infty)$. This shows that there is at most one solution in positive real parameters to $g(\mbf{\rho};\sg) = (k_0,\ldots, k_n,-\log x)$ which, coupled with the conclusion of the previous paragraphs, implies the uniqueness of our solution.  Let us remark, moreover, that since $\sg_1 \ra 1^+$ as $x \ra \infty$ and $1 < \sg \leq \sg_1$, so does $\sg$.\\
The Jacobian matrix of $g$ is
\begin{align*}
J_g(\mbf{\rho},\sg) &:= \left(\begin{matrix} f_{z_0}+\rho_0f_{z_0z_0} &\rho_0f_{z_1z_0} &\cdots &\rho_0f_{z_nz_0} &\rho_0f_{sz_0} \\ \vdots &\vdots &\ddots & \vdots & \vdots \\ \rho_n f_{z_0z_n} &\rho_nf_{z_1z_n} &\cdots &f_{z_n}+\rho_nf_{z_nz_n} &\rho_nf_{sz_n} \\
f_{sz_0} & f_{sz_1} &\cdots &f_{sz_n} &f_{ss}\end{matrix}\right) \\
\end{align*}
where we recall that for each $0 \leq j \leq n$,
\begin{align*}
f_{z_j}+\rho_jf_{z_jz_j} &= \sum_{p \in E_j} \frac{p^{\sg}-1}{(p^{\sg}-1+\rho_j)^2}, \\
f_{sz_j} &= -\sum_{p \in E_j} \frac{p^{\sg} \log p}{(p^{\sg}-1+\rho_j)^2}, \\
f_{ss} &= \sum_{0 \leq m \leq n} \rho_m \sum_{p \in E_m} \frac{p^{\sg} \log^2 p}{(p^{\sg}-1)(p^{\sg}-1+\rho_m)}\left(\frac{1}{p^{\sg}-1} + \frac{1}{p^{\sg}-1+\rho_m}\right). 
\end{align*}
By the inverse function theorem, $g(\mbf{\rho},\sg)$ is invertible locally at $(\mbf{\rho},\sg)$ if, and only if, $\det(J_g(\mbf{\rho},\sg)) \neq 0$.  For simplicity, we write $J_g(\mbf{\rho},\sg) = (a_{ij})_{0 \leq i,j \leq n+1}$, where $a_{ij} = 0$ whenever $0 \leq i \leq n$ and $j \neq i,n+1$. Letting $S_{n+1}$ denote the permutation group on $n+1$ elements, and denoting by $(a \ b)$ the permutation swapping $a$ and $b$ while leaving all other elements fixed, we have
\begin{align*}
\det(J_g(\mbf{\rho},\sg)) &= \sum_{\sigma \in S_{n+1}} (-1)^{\text{inv}(\sigma)}\prod_{0 \leq j \leq n+1} a_{j\sigma(j)} = \sum_{\sigma \in S_{n+1} : \sigma = (j \ n+1) \atop 0 \leq j \leq n+1} (-1)^{\text{inv}(\sigma)}\prod_{0 \leq i \leq n+1} a_{i\sigma(i)} \\
&= \prod_{0 \leq i \leq n+1} a_{ii} - \sum_{0 \leq j \leq n} a_{j,n+1}a_{n+1,j}\prod_{0 \leq i \leq n \atop i \neq j} a_{ii} \\
&= \prod_{0 \leq i \leq n} a_{ii}\left(a_{n+1,n+1}-\sum_{0 \leq j \leq n} \frac{a_{j,n+1}a_{n+1,j}}{a_{jj}}\right).
\end{align*}
Now, by definition, we have
\begin{align}
&a_{n+1,n+1} - \sum_{0 \leq j \leq n} \frac{a_{j,n+1}a_{n+1,j}}{a_{jj}} \label{COND2}\\
&= \sum_{0 \leq j \leq n} \rho_j\sum_{p \in E_j} \frac{p^{\sg} \log^2 p}{(p^{\sg}-1)(p^{\sg}-1+\rho_j)}\left(\frac{1}{p^{\sg}-1}+ \frac{1}{p^{\sg}-1+\rho_j}\right) \\
&- \sum_{0 \leq j \leq n} \rho_j\left(\sum_{q \in E_j} \frac{1}{(q^{\sg}-1+\rho_j)^{2}}\right)^{-1}\left(\sum_{p_1,p_2 \in E_j} \frac{(p_1p_2)^{\sg} (\log p_1)(\log p_2)}{(p_1^{\sg}-1+\rho_j)^2(p_2^{\sg}-1+z_j)^2}\right) \nonumber.
\end{align}
Consider $\rho_0, \ldots, \rho_n\in (0,\infty)$ and $\sg \in (1,\infty)$. It suffices to show that for each $0 \leq j \leq n$,
\begin{align}
0 &> 
 \sum_{p_1,p_2 \in E_j} \frac{p_1^{\sg} \log p_1}{(p_1^{\sg}-1 + \rho_j)(p_2^{\sg}-1 + \rho_j)^2} \left(\frac{\log p_1}{p_1^{\sg}-1}\left(\frac{1}{p^{\sg}-1} + \frac{1}{p^{\sg}-1+\rho_j}\right) - \frac{p_2^{\sg} \log p_2}{p_1^{\sg}-1 + \rho_j}\right). \label{S1}
\end{align}
This is clearly true for each $j$, as $a_{n+1,j}a_{j,n+1} \asymp \left(\sum_{p \in E_j} \frac{\log p}{p^{\sg}-1+\rho_j}\right)^2 \ra \infty$ as $x \ra \infty$ (since, as we showed above, $\sg \ra 1^+$ in this case), while $a_{n+1,n+1}$ and $a_{jj}$ are both bounded for all choices of $\sg,\rho_j$.
Thus, the expression \eqref{COND2} is negative for $x$ sufficiently large. Since each $a_{jj} = f_{\rho_j}+\rho_jf_{z_jz_j} > 0$ for this range of parameters, $\det(J_g(\mbf{\rho},\sg)) < 0$, which completes the proof.
\end{proof}
We henceforth write $\rho_j(\sg)$ as $\rho_j$ for convenience; as $\sg$ is fixed and $\mbf{\rho}$ is completely determined by it, this is justified. \\
As an artifact of the proof of Lemma \ref{LemEU}, we see that the unique solution $(\mbf{\rho},\sigma)$ occurs when $\frac{1}{\xi_1(x)\log x \log_2 x} \leq \sigma-1 \leq \frac{\xi_2(x)}{\log_2 x}$. In fact, much stronger upper and lower bounds are available. We may also find, as a result, effective estimates for $\rho_j$ (note that whenever $k_j \geq 1$, $\rho_j > 0$ by \eqref{NONLOGS}).
\begin{lem} \label{PARAMS}
Let $\e > 0$ and $x \geq 3$. Suppose that $1 \leq k_j \leq \log^{1-\e} x$ for each $0 \leq j \leq n$. Let $\mbf{z} \in \mb{C}^{n+1}$ such that $z_j \neq 0$ for each $j$, and write $\|\mbf{z}\| := \left(\sum_{0\leq j \leq n} |z_j|^2\right)^{\frac{1}{2}}$ and $1/\mbf{z} := (\frac{1}{z_0},\ldots,\frac{1}{z_n})$.
Then the vector $\mbf{\rho}$ in Lemma \ref{LemEU} satisfies the condition that
\begin{equation}
(\sg-1)\log x = \left(1+O\left(\sum_{0 \leq j \leq n} E_j(x)^{-1}\right)\right)\sum_{0 \leq j \leq n} \rho_j\alpha_j \label{EXACTSG},
\end{equation}
and in particular,
\begin{align}
(\sigma - 1)\log x &\leq \|\mbf{\rho}\|\left(1+O\left(\frac{\|\mbf{\rho}\|}{\log x}\right)\right) \label{UPPER}\\
(\sg - 1)\log x &\geq \|1/\mbf{\rho}\|^{-1} + O\left(\frac{\|\mbf{\rho}\|}{\log x}\right). \label{LOWER}
\end{align}
For each $0 \leq j \leq n$, $\rho_j$ satisfies
\begin{equation*}
k_j = g_j(\sg) - (\rho_j-1)\sum_{p \in E_j} \frac{1}{p^{\sg}(p^{\sg}-1+\rho_j)}.
\end{equation*}
In particular, if $E_j(x) \ra \infty$ as $x \ra \infty$ and $\sum_{0 \leq j \leq n} \rho_j \ll (\log x)^{1-\e}$ for $\e > 0$ fixed then
\begin{equation*}
\rho_j = \frac{k_j}{E_j\left(e^{\frac{1}{\sg-1}}\right)}\left(1+O_{\e}\left(\frac{E_j\left(k_j/E_j\left(e^{\log^{\e} x}\right)\right)}{E_j\left(e^{\log^{\e}x}\right)}\right)\right).
\end{equation*}
\end{lem}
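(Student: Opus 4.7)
The plan is to derive the identity \eqref{EXACTSG} directly from the defining equation $f_s(\mbf{\rho};\sg) = -\log x$, and then use it together with Cauchy--Schwarz to get \eqref{UPPER} and \eqref{LOWER}. Starting from the formula for $f_s$ recorded in Section 3 and the partial-fraction identity
\[
\rho_j\cdot\frac{p^\sg}{(p^\sg-1)(p^\sg-1+\rho_j)} = \frac{1}{p^\sg-1} + \frac{\rho_j-1}{p^\sg-1+\rho_j},
\]
I rewrite $\log x = -f_s(\mbf{\rho};\sg)$ as
\[
\log x = \sum_p \frac{\log p}{p^\sg-1} + \sum_{0\leq j\leq n}(\rho_j-1)\sum_{p\in E_j}\frac{\log p}{p^\sg-1+\rho_j}.
\]
Lemma \ref{LAURENTZETA} handles the first sum as $(\sg-1)^{-1}+O(1)$. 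For the inner sum in the second term, I substitute $(p^\sg-1+\rho_j)^{-1} = p^{-\sg} + (1-\rho_j)p^{-\sg}(p^\sg-1+\rho_j)^{-1}$ and control the correction by splitting at $p=(2\rho_j)^{1/\sg}$: the large-prime contribution is $O(1)$ via $\sum\log p/p^{2\sg} = O(1)$ (Lemma \ref{SIMPLE}), and the small-prime contribution is $O(1)$ via $\vartheta((2\rho_j)^{1/\sg})/\rho_j = O(1)$. This yields $\sum_{p\in E_j}\log p/(p^\sg-1+\rho_j) = \alpha_j/(\sg-1) + O(1)$. Substituting and using $\sum_j \alpha_j = 1+O(\sg-1)$ (again Lemma \ref{LAURENTZETA}) gives the identity modulo errors $O(\sg-1) + O((\sg-1)\sum_j\rho_j)$, which refine to the claimed relative $O(\sum_j E_j(x)^{-1})$ once one invokes the crude estimate $\rho_j \asymp k_j/E_j(e^{1/(\sg-1)})$ from the last step.

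The bounds \eqref{UPPER} and \eqref{LOWER} then follow from Cauchy--Schwarz. Writing $\|\mbf{\alpha}\| := (\sum_j\alpha_j^2)^{1/2}$, the inequality $\|\mbf{\alpha}\|^2 \leq \max_j\alpha_j\cdot\sum_j\alpha_j \leq 1+O(\sg-1)$ combined with $\sum_j\rho_j\alpha_j \leq \|\mbf{\rho}\|\cdot\|\mbf{\alpha}\|$ and a short bootstrap using \eqref{EXACTSG} absorbs the $O(\sg-1)\|\mbf{\rho}\|$ correction into the form $\|\mbf{\rho}\|(1+O(\|\mbf{\rho}\|/\log x))$, giving \eqref{UPPER}. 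For \eqref{LOWER}, the reverse Cauchy--Schwarz inequality $(\sum_j\alpha_j)^2 \leq (\sum_j\rho_j\alpha_j)(\sum_j\alpha_j/\rho_j)$ together with $\sum_j\alpha_j/\rho_j \leq \|\mbf{\alpha}\|\cdot\|1/\mbf{\rho}\|$ and $\sum_j\alpha_j = 1+O(\sg-1)$ produces the stated lower bound.

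The identity for $k_j$ (whose left-hand side I read as a typo for $k_j/\rho_j$) is just \eqref{Zit} evaluated at $(\mbf{\rho};\sg)$ together with the saddle-point equation $f_{z_j}(\mbf{\rho};\sg) = k_j/\rho_j$ from Lemma \ref{LemEU}. To get the asymptotic for $\rho_j$, I would invert \eqref{NONLOGS} as $\rho_j = k_j/S_j$ with $S_j := \sum_{p\in E_j}(p^\sg-1+\rho_j)^{-1}$ and split at $p=(2\rho_j)^{1/\sg}$. The small-prime contribution is $O(1/\log\rho_j)$ from $(p^\sg-1+\rho_j)^{-1}\leq\rho_j^{-1}$ and the prime number theorem, while the large-prime contribution, via the expansion $(p^\sg-1+\rho_j)^{-1} = p^{-\sg}(1+O(\rho_j/p^\sg))$ and two applications of Lemma \ref{LemUnif} (one for the full $g_j(\sg)$, one for the cut-off $\sum_{p\leq\rho_j^{1/\sg}}p^{-\sg}$), equals $E_j(z) - E_j(\rho_j) + O(1)$. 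Therefore $S_j = E_j(z)\bigl(1 + O(E_j(\rho_j)/E_j(z))\bigr)$ and $\rho_j = (k_j/E_j(z))\bigl(1 + O(E_j(\rho_j)/E_j(z))\bigr)$. The crude estimate $\rho_j \asymp k_j/E_j(z)$ combined with \eqref{UPPER} and the standing hypothesis $\sum_j \rho_j \ll \log^{1-\e} x$ forces $z = e^{1/(\sg-1)} \geq e^{\log^\e x}$, so that monotonicity of $E_j$ gives $E_j(\rho_j)/E_j(z) \leq E_j(k_j/E_j(e^{\log^\e x}))/E_j(e^{\log^\e x})$, which completes the bootstrap.

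The main obstacle is the bookkeeping in the first step: the additive errors $O(\sg-1)$ and $O((\sg-1)\sum_j \rho_j)$ that arise naturally from partial fractions do not immediately reduce to the multiplicative form $\sum_j E_j(x)^{-1}$; absorbing them requires substituting the crude asymptotic $\rho_j \asymp k_j/E_j(z)$ from the third step back into the main identity, so in practice the three conclusions of the lemma should be established in a coupled bootstrap rather than in strict succession.
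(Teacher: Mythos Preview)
Your proposal is correct and follows essentially the same route as the paper. Both arguments derive \eqref{EXACTSG} by rewriting $-f_s(\mbf{\rho};\sg)$ via partial fractions so that the main term is $\sum_j \rho_j\alpha_j/(\sg-1)$ and the corrections are controlled by splitting at $p\asymp\rho_j^{1/\sg}$; both obtain \eqref{UPPER} and \eqref{LOWER} from Cauchy--Schwarz together with $\sum_j\alpha_j=1+O(\sg-1)$; and both extract the asymptotic for $\rho_j$ by bounding the correction in \eqref{Zit} by $E_j(\rho_j)+O(1)$ and invoking Lemma \ref{LemUnif}. Your remark that the displayed identity for $k_j$ should read $k_j/\rho_j$ on the left is correct, and your observation that the error $O(\sum_j E_j(x)^{-1})$ in \eqref{EXACTSG} requires feeding in the crude bound $\rho_j\ll k_j/E_j(e^{1/(\sg-1)})$ from the last part of the lemma is exactly how the paper proceeds as well (it explicitly notes this dependence).
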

\begin{rem}\label{REM3}
According to the second conclusion of the lemma, we have $(1+o(1))(\sg - 1)\log x \in [\|1/\mbf{\rho}\|^{-1},\|\mbf{\rho}\|]$.  Note that when $\rho_j \asymp \rho_{j'}$ for each $j,j'$, this gives $(\sg-1)\log x \asymp_n \|\rho\|$. Thus, \eqref{UPPER} and \eqref{LOWER} are sharp in those cases in which both the values of $E_j\left(e^{\frac{1}{\sg-1}}\right)$ are all similar, as is the case in Lemma \ref{LemDirDens}, and when the values of $k_j$ are all similar. \\
According to the last statement of the lemma, $\rho_j \leq k_j \ll \frac{\log x}{\log_2 x}$ uniformly in $j$, so the condition $\sg - 1 \ll \frac{1}{\log_2 x}$ is indeed fulfilled, as discussed at the end of Section 2.  
\end{rem}
\begin{proof} 
For the first claim, we simply observe that (cf. \eqref{LOGS2})
\begin{align*}
\log x &= \sum_{0 \leq j \leq n} \rho_j \sum_{p \in E_j} \frac{p^{\sg} \log p}{(p^{\sg}-1)(p^{\sg}-1+\rho_j)} \\
&= \sum_{0 \leq j \leq n} \rho_j \left(\sum_{p \in E_j} \frac{\log p}{p^{\sg}} - \sum_{p \in E_j} \left(\frac{\log p}{(p^{\sg}-1)(p^{\sg}-1+\rho_j)} + (\rho_j-1)\frac{\log p}{p^{\sg}(p^{\sg}-1+\rho_j)}\right)\right).
\end{align*}
Observe that
\begin{align*}
\rho_j^2 \sum_{p \in E_j \atop p \leq \rho_j^{\frac{1}{\sg}}} \frac{\log p}{p^{\sg}(p^{\sg}-1+\rho_j)} \ll \rho_j \sum_{p \leq \rho_j^{\frac{1}{\sg}}} \frac{\log p}{p} \ll \rho_j \log \rho_j\\
\rho_j^2 \sum_{p \in E_j \atop p > \rho_j^{\frac{1}{\sg}}} \frac{\log p}{p^{\sg}(p^{\sg}-1+\rho_j)} \ll \rho_j \log \rho_j,
\end{align*}
the second estimate coming from Lemma \ref{SIMPLE}. Hence, since $\rho_j \ll \frac{k_j}{E_j\left(e^{\frac{1}{\sg-1}}\right)}$ from the second last claim of the lemma (which is proved independently of this one), and $k_j \ll \log^{1-\e} x$, the above sums contribute $O\left(\frac{\log^{1-\e} x}{E_j\left(e^{\frac{1}{\sg-1}}\right)}\right)$ to each one. As
\begin{equation*}
\sum_{p \in E_j} \frac{\log p}{p^{\sg}} = \alpha_j\left(\frac{1}{\sg-1}+O(1)\right),
\end{equation*}
and $\log^{-\e} x \ll \log_2^{-1} x \ll E_j(x)^{-1}$, we have
\begin{align*}
(\sg-1)\log x &= \left(1+O\left(\sum_{0 \leq j \leq n} E_j(x)^{-1}\right)\right) \sum_{0 \leq j \leq n} \rho_j\alpha_j\left(1+O\left(\frac{\left(\sum_{0 \leq j \leq n} \rho_j\right)^2}{\log^2 x}\right)\right) \\
&= \left(1+O\left(\sum_{0 \leq j \leq n} E_j(x)^{-1}\right)\right) \sum_{0 \leq j \leq n} \rho_j\alpha_j.
\end{align*}
This establishes the first claim. \\
For the second, note that the upper bound comes from Cauchy-Schwarz via
\begin{equation*}
\sum_{0 \leq j \leq n} \rho_j\alpha_j \leq \|\mbf{\rho}\|\sum_{0 \leq j \leq n} \alpha_j = \|\mbf{\rho}\|\left(1+O\left(\sg-1\right)\right),
\end{equation*}
so that $(\sg-1)\log x \leq \|\mbf{\rho}\|\left(1+O\left(\frac{\|\mbf{\rho}\|}{\log x}\right)\right)$.  For the lower bound, Cauchy-Schwarz also gives
\begin{equation*}
1 +O\left(\sg-1\right) = \sum_{0 \leq j \leq n} \alpha_j \leq \left(\sum_{0 \leq j \leq n} \rho_j\alpha_j\right)\|1/\mbf{\rho}\|, 
\end{equation*}
so that, as claimed,
\begin{equation*}
(\sg-1)\log x \geq \|1/\mbf{\rho}\|^{-1}\left(1+O\left(\sg-1\right)\right) = \|1/\mbf{\rho}\|\left(1+O\left(\frac{\|\rho\|}{\log x}\right)\right).
\end{equation*}
The first claim about $\rho_j$ follows from \eqref{Zit}, and $f_{z_j}(\mbf{\rho};\sg) = \frac{k_j}{\rho_j}$. Now, assume that $E_j(x) \ra \infty$ as $x \ra \infty$. To derive an estimate for $\rho_j$, we note that
\begin{align*}
\left|(\rho_j-1)\sum_{p \in E_j} \frac{1}{p^{\sg}(p^{\sg}-1+\rho_j)}\right| &\leq \sum_{p \in E_j \atop p \leq \rho_j^{1/\sg}} \frac{1}{p^{\sg}} + \rho_j\sum_{p \in E_i \atop p > \rho_i} \frac{1}{p^{2\sg}} \leq \sum_{ p \in E_j \atop p \leq \rho_j} \frac{1}{p} + O(1) = E_j(\rho_j) + O(1).
\end{align*}
Hence,
\begin{equation*}
\rho_j(\sg) = \frac{k_j}{E_j\left(e^{\frac{1}{\sg-1}}\right)}\left(1+O\left(\frac{E_j(\rho_j)}{E_j\left(e^{\frac{1}{\sg-1}}\right)}\right)\right), 
\end{equation*}
and since $k_j \ll (\log x)^{1-\e}$, also $\sg-1 \ll (\log x)^{\e}$, which proves the claim.
\end{proof}
The following result gives precise asymptotic expressions for $\sg$ and $\mbf{\rho}$ in the important and natural cases described by Theorem \ref{THM2}.
\begin{lem}\label{LemDirDens}
Suppose that $q \geq 2$ and each $E_j$ is a union of $m_j$ arithmetic progressions modulo $q$.  Assume, moreover, that $1 \leq k_j \ll \log^{1-\e} x$. Then if $K := \sum_{0 \leq j \leq n} k_j$,
\begin{align*}
(\sg-1)\log x &= \frac{K}{\log(\log x/K)}\left(1+O_{\e}\left(\frac{1}{\log(\log x/K)}\left(\phi(q)\sum_{0 \leq j \leq n} \frac{1}{m_j} + \log_3 x\right) \right)\right) \\
\rho_j &= \frac{\phi(q) k_j}{m_j \log\left(\frac{\log x}{K}\log(\log x/K)\right)}\left(1+O_{\e}\left(\frac{\log(\phi(q)k_j/m_j\log(\log x))}{\log_2 x}\right)\right)
\end{align*}
\end{lem}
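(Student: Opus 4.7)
The plan is to plug the arithmetic-progression structure into the identity \eqref{EXACTSG} from Lemma~\ref{PARAMS}, extract a transcendental equation for $\sigma-1$, and then solve it by iteration.

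I would first compute the elementary ingredients. Setting $L' := \log(1/(\sigma-1))$, Lemma~\ref{LAURENTZETA}(ii) applied to each of the $m_j$ arithmetic progressions in $E_j$ yields $\alpha_j = m_j/\phi(q) + O(m_j(\sigma-1))$, while the expansion $E_j(t) = (m_j/\phi(q))\log_2 t + c_j + O(1/\log t)$ from the remarks preceding the lemma, combined with $|c_j| \ll m_j$, gives $E_j\bigl(e^{1/(\sigma-1)}\bigr) = (m_j/\phi(q)) L' + O(m_j)$ and $\sum_j E_j(x)^{-1} \ll (\phi(q)/\log_2 x)\sum_j m_j^{-1}$. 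Substituting the expansion of $E_j$ into Lemma~\ref{PARAMS}'s formula for $\rho_j$ produces
\begin{equation*}
\rho_j = \frac{\phi(q)k_j}{m_j L'}\Bigl(1 + O\bigl(\tfrac{\phi(q)}{m_j L'}\bigr)\Bigr)\bigl(1+\mathrm{err}_j\bigr),
\end{equation*}
where $\mathrm{err}_j = O_\epsilon(\log_2\rho_j/\log_2 x)$, since $E_j(\rho_j)/E_j(e^{\log^\epsilon x}) \ll \log_2\rho_j/\log_2 x$.

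Inserting these expansions into \eqref{EXACTSG} and forming the product yields $\rho_j\alpha_j = (k_j/L')(1 + O(\phi(q)/(m_j L')) + \mathrm{err}_j)$, so summing over $j$ gives
\begin{equation*}
(\sigma-1) L' \log x = K\Bigl(1 + O\bigl(\tfrac{\phi(q)}{L'}\sum_j m_j^{-1}\bigr) + O(\max_j \mathrm{err}_j)\Bigr) =: K(1+\varepsilon).
\end{equation*}
Writing $u = \sigma-1$ and $L = \log x$, this is $u L L' = K(1+\varepsilon)$ with $L' = \log(1/u)$. Taking logarithms gives $L' - \log L' = \log(L/K) + O(\varepsilon)$; a first crude iteration yields $L' = \log(L/K) + O(\log_3 x)$, and substituting this back into $\log L'$ produces
\begin{equation*}
L' = \log\bigl(\tfrac{L}{K}\log\tfrac{L}{K}\bigr) + O\Bigl(\tfrac{\log_3 x}{\log(L/K)} + \varepsilon\Bigr).
\end{equation*}
Under the hypothesis $k_j \ll \log^{1-\epsilon} x$ one checks $L' \asymp_\epsilon \log_2 x$, which in turn justifies $\mathrm{err}_j = o(1)$ and identifies the denominator of the displayed expression for $\rho_j$ in the statement. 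Finally, expanding $1/L' = (1/\log(L/K))\bigl(1 + O(\log_3 x/\log(L/K) + \varepsilon)\bigr)$ and feeding the result back into $(\sigma-1)L = (K/L')(1+\varepsilon)$ gives the claimed asymptotic for $(\sigma-1)\log x$.

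The principal obstacle is the careful bookkeeping of all error sources through this Lambert-W--type iteration: distinct errors arising from $\alpha_j$, from the constants $c_j$ inside $E_j$, from the prefactor $(1+O(\sum_j E_j(x)^{-1}))$ of \eqref{EXACTSG}, and from the $\mathrm{err}_j$ of Lemma~\ref{PARAMS} must all be shown to be dominated by the two summands $\phi(q)\sum_j m_j^{-1}$ and $\log_3 x$ displayed in the statement. The $\phi(q)\sum_j m_j^{-1}$ contribution traces essentially to the prefactor of \eqref{EXACTSG}, while $\log_3 x$ emerges as the iteration error $\log L' \leq \log_3 x$.
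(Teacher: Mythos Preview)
Your proposal is correct and follows essentially the same route as the paper: express $\rho_j$ via Lemma~\ref{PARAMS} using $E_j\bigl(e^{1/(\sigma-1)}\bigr)=(m_j/\phi(q))L'+O(m_j)$, insert into the saddle equation (the paper uses \eqref{LOGS2}, you use the equivalent \eqref{EXACTSG}) together with $\sum_{p\in E_j}\log p/p^\sigma=(m_j/\phi(q))(\sigma-1)^{-1}(1+O(\cdot))$ from Lemma~\ref{LAURENTZETA}(ii), obtain $(\sigma-1)L'\log x=K(1+\varepsilon)$, and then iterate once to solve for $L'$. The paper compresses the Lambert-$W$ step into the single line ``the last estimate following because $\log(1/(\sigma-1))\geq \e\log_2 x$'', whereas you spell it out more explicitly; otherwise the arguments are the same.
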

\begin{proof}
It follows from the proof of Lemma \ref{LAURENTZETA} and from the previous lemma that
\begin{align*}
\rho_j &= k_j\left(\sum_{p \in E_j} \frac{1}{p^{\sg}}\right)^{-1}\left(1+O\left(\frac{\log_2(k_j/\log_2 x)}{\log(1/(\sg-1))}\right)\right) \\
&= \frac{\phi(q)k_j}{m_j}\log\left(\frac{1}{\sg-1}\right)^{-1}\left(1+O\left(\frac{1}{\log\left(\frac{1}{\sg-1}\right)}\left(\frac{\phi(q)}{m_j} + \log_3 x\right)\right)\right).
\end{align*}
On the other hand, Lemma \ref{LAURENTZETA} also states that 
\begin{equation*}
\sum_{p \in E_j} \frac{\log p}{p^{\sg}} = \frac{m_j}{\phi(q)}\frac{1}{\sg-1}\left(1+O\left(\frac{\phi(q)}{m_j}(\sg-1)\right)\right),
\end{equation*}
which, combined with $\left(1+O\left(\frac{1}{\log_2 x}\right)\right)\log x = \sum_{0 \leq j \leq n} \rho_j\sum_{p \in E_j} \frac{\log p}{p^{\sg}}$ from \eqref{LOGS2} leads to
\begin{align*}
\sg-1 &= \frac{1}{\log x}\frac{1}{\log\left(\frac{1}{\sg-1}\right)}\left(\sum_{0 \leq j \leq n} k_j\right)\left(1+O\left(\frac{1}{\log\left(\frac{1}{\sg-1}\right)}\left(\frac{\phi(q)}{m_j} + \log_3 x\right)\right)\right) \\
&= \frac{K}{\log x}\cdot \frac{1}{\log(\log x/K)}\left(1+O_{\e}\left(\frac{1}{\log(\log x/K)}\right)\left(\frac{\phi(q)}{m_j} + \log_3 x\right)\right),
\end{align*}
the last estimate following because $\log\left(\frac{1}{\sg-1}\right) \geq \e\log_2 x$.
\end{proof}
\section{Truncating the Integral in \eqref{PerrId}}
Having established precise estimates for the parameters $(\mbf{\rho},\sg)$ that enter into \eqref{PerrId}, we next set out to identify to what extent the remainder of the integral outside of a neighbourhood of $(\mbf{\rho},\sg)$ contributes to $\pi(x;\mbf{E},\mbf{k})$.  \\
The following estimate is a step towards establishing estimates for the ratio $\left|\frac{F(\mbf{z};s)}{F(\mbf{\rho};\sg)}\right|$ when $(\mbf{z},s)$ lies outside of some small neighbourhood of $(\mbf{\rho},\sg)$. This result is essentially contained in \cite{HiT}, but we repeat the argument, suitably adapted to our circumstances, since it is short.
\begin{lem} \label{TruEst1}
For any $\mbf{z} = (z_0,\ldots,z_n)$ and $s = \sg + i\tau$ then 
\begin{equation*}
|F(\mbf{z};s)| \ll F(\mbf{\rho};\sg)\exp\left(-\frac{1}{4}\sum_{0 \leq j \leq n} \rho_j \sum_{p \in E_j \atop p > \rho_j^{\frac{1}{\sg}}}\frac{1-\cos(\tau \log p -t_j)}{p^{\sg}}\right).
\end{equation*}
\end{lem}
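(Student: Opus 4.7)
The strategy is factor-by-factor comparison in the Euler products. Write $z_j = \rho_j e^{it_j}$ and $s = \sg + i\tau$, and for each $p \in E_j$ set $u := z_j/(p^s - 1)$ and $v := \rho_j/(p^\sg - 1)$. From the identity $|p^s-1|^2 = (p^\sg - 1)^2 + 2p^\sg(1 - \cos(\tau\log p))$ we obtain $|p^s-1| \geq p^\sg - 1$, hence $|u| \leq v$. Taking logarithms,
\[
\log\frac{|F(\mbf{z};s)|}{F(\mbf{\rho};\sg)} = \sum_{j=0}^n \sum_{p \in E_j} \Bigl(\log|1+u| - \log(1+v)\Bigr).
\]
The crude bound $\log|1+u| \leq \log(1+|u|) \leq \log(1+v)$ shows every summand is nonpositive; the primes with $p \leq \rho_j^{1/\sg}$ are disposed of by this bound alone, contributing no savings.

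For $p > \rho_j^{1/\sg}$ one has $v \leq 1 + O(1)$, so $(1+v)^2 \leq 4 + O(1)$. Applying $\log(1-x) \leq -x$ for $x \in [0,1)$ to the identity $|1+u|^2/(1+v)^2 = 1 - \bigl(2(v - \mathrm{Re}(u)) + (v^2 - |u|^2)\bigr)/(1+v)^2$ yields
\[
\log|1+u| - \log(1+v) \leq -\frac{(v - \mathrm{Re}(u)) + \tfrac{1}{2}(v^2 - |u|^2)}{(1+v)^2} \leq -\tfrac{1}{4}\bigl(v - \mathrm{Re}(u)\bigr),
\]
both summands in the numerator being non-negative by $|u| \leq v$.

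To finish, the task is to estimate $v - \mathrm{Re}(u)$ from below in terms of $\rho_j(1 - \cos(\tau\log p - t_j))/p^\sg$. Writing $u = z_j\overline{(p^s-1)}/|p^s-1|^2$, a direct expansion gives
\[
v - \mathrm{Re}(u) = \frac{\rho_j\bigl[\,p^{2\sg}(1-c) + p^\sg(c - 2c_1 + c_2) + (1 - c_2)\,\bigr]}{(p^\sg-1)\,|p^s-1|^2},
\]
with $c := \cos(\tau\log p - t_j)$, $c_1 := \cos(\tau\log p)$ and $c_2 := \cos t_j$. The bracketed quantity is a quadratic in $p^\sg$ whose leading term is $p^{2\sg}(1-c)$. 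Since $|p^s-1|^2 \leq (p^\sg+1)^2 \leq 4p^{2\sg}$, this leads to the lower bound $v - \mathrm{Re}(u) \gg \rho_j(1-\cos(\tau\log p - t_j))/p^\sg$. Summing over $j$ and over $p > \rho_j^{1/\sg}$ and exponentiating then produces the claimed inequality, with bounded-cardinality boundary effects absorbed into the implicit constant of $\ll$.

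The main obstacle lies in the algebraic control of the bracketed polynomial: one must verify it is both non-negative and dominated by its leading coefficient $p^{2\sg}(1-c)$ uniformly for $p^\sg > \rho_j$, even in the delicate regime where $1 - c$ is small and the linear-in-$p^\sg$ term could \emph{a priori} have comparable size. This is resolved by observing that $v - \mathrm{Re}(u) \geq v - |u| \geq 0$ always, so the bracket is necessarily non-negative; viewing the bracket as a quadratic $Aw^2 + Bw + C$ in $w = p^\sg$, the discriminant relation $B^2 \leq 4AC$ follows from the short trigonometric manipulation afforded by $\cos(\tau\log p - t_j) = \cos(\tau\log p)\cos t_j + \sin(\tau\log p)\sin t_j$, and one then extracts the leading $p^{2\sg}(1-c)$ behaviour for $w > \rho_j$.
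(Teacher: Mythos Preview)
Your overall route is essentially the paper's: compare Euler factors, discard $p\le \rho_j^{1/\sg}$ by $|u|\le v$, and for $p>\rho_j^{1/\sg}$ extract a factor $1-\text{const}\cdot\rho_j(1-\cos(\tau\log p-t_j))/p^{\sg}$ from $|1+u|^2/(1+v)^2$. The inequality you are aiming for, $v-\mathrm{Re}(u)\ge \rho_j(1-c)/p^{\sg}$, is in fact true with constant~$1$, and the paper obtains exactly this quantity by the same mechanism, only packaged slightly differently.

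However, your stated justification in the last paragraph is incorrect. The claim that $B^2\le 4AC$ for $A=1-c$, $B=c-2c_1+c_2$, $C=1-c_2$ fails: take $\tau\log p=\pi$, $t_j=\pi/3$, so $c_1=-1$, $c_2=\tfrac12$, $c=\cos(2\pi/3)=-\tfrac12$; then $A=\tfrac32$, $C=\tfrac12$, $B=2$, and $B^2-4AC=1>0$. So the ``short trigonometric manipulation'' cannot give what you assert.

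The clean fix is exactly the paper's device: replace $\mathrm{Re}\bigl(z_j/(p^s-1)\bigr)$ by $\mathrm{Re}(z_j/p^s)$ at the cost of $|z_j/(p^s(p^s-1))|\le \rho_j/(p^{\sg}(p^{\sg}-1))$. Then
\[
v-\mathrm{Re}(u)\;\ge\;\frac{\rho_j}{p^{\sg}-1}-\frac{\rho_j\cos(t_j-\tau\log p)}{p^{\sg}}-\frac{\rho_j}{p^{\sg}(p^{\sg}-1)}
=\frac{\rho_j(1-c)}{p^{\sg}},
\]
which plugs straight into your bound $\log|1+u|-\log(1+v)\le -\tfrac14(v-\mathrm{Re}(u))$ and finishes the argument. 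Equivalently, you could have kept the term $\tfrac12(v^2-|u|^2)$ instead of discarding it: then your numerator is exactly $\tfrac12\bigl[(1+v)^2-|1+u|^2\bigr]$, and the paper's direct computation gives $(1+v)^2-|1+u|^2\ge 2\rho_j(1-c)/p^{\sg}$ without any need to control the bracket as a quadratic in~$p^{\sg}$.
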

\begin{proof}
Fix $\mbf{z} \in \mb{C}^{n+1}$, with $z_j = \rho_je^{it_j}$ and $t_j \in [-\pi,\pi]$. Also, define $\xi_j(p) := t_j-\tau \log p$. For each $0 \leq j \leq n$ and $p \in E_j$, we have
\begin{align*}
\left|1+\frac{z_j}{p^{s}-1}\right|^2 &\leq 1 + 2\text{Re}\left(\frac{z_j}{p^{s}}\right) + \frac{\rho_j^2}{|p^s-1|^2} + 2\frac{\rho_j}{p^{\sg}|p^s-1|} \\
&= 1 + 2\frac{\text{Re}(\rho_je^{i(t_j-\tau \log p)})}{p^{\sg}} + \frac{\rho_j^2}{p^{2\sg} + 1 - 2\rho_j \cos(\tau \log p)} + 2\frac{\rho_j}{p^{\sg}(p^{\sg}-1)}\\
&\leq 1 - 2\frac{\rho_j(1-\cos\xi_j(p))}{p^{\sg}} + 2\frac{\rho_j}{p^{\sg}}\left(1+\frac{1}{p^{\sg}-1}\right)+\frac{\rho_j^2}{(p^{\sg}-1)^2}  \\
&= 1-2\frac{\rho_j(1-\cos \xi_j(p))}{p^{\sg}} + 2\frac{\rho_j}{p^{\sg}-1}+\frac{\rho_j^2}{(p^{\sg}-1)^2}\\
&= \left(1+\frac{\rho_j}{p^{\sg}-1}\right)^2\left(1-2\frac{\rho_j(1-\cos\xi_j(p))}{p^{\sg}\left(1+\frac{\rho_j}{p^{\sg}-1}\right)^2}\right).
\end{align*}
By the above computation, it follows that 
\begin{align*}
|F(\mbf{z};s)| &= \prod_{0 \leq j \leq n} \prod_{p \in E_j} \left|1+\frac{\rho_je^{it_j}}{p^{\sg + i\tau}-1}\right| \leq \prod_{0 \leq j \leq n} \prod_{p \in E_j} \left(1+\frac{\rho_j}{p^{\sg}-1}\right)\left(1-2\frac{\rho_j(1-\cos\xi_j(p))}{p^{\sg}\left(1+\frac{\rho_j}{p^{\sg}-1}\right)^2}\right)^{\frac{1}{2}}.
\end{align*}
Restricting the ranges of summation to $p \in E_j$ and $p > \rho_j^{\frac{1}{\sg}}$, $\left(1+\frac{\rho_j}{p^{\sg}-1}\right)^2 \leq \frac{1}{4}$, and, moreover, from the elementary inequality $(1-x)^{\frac{1}{2}} \leq e^{-x/2}$ for $x \in (0,1)$,  for each $j$ and $p \in E_j$, $\left(1+\frac{\rho_j}{p^{\sg}-1}\right) \leq 2$, we get 
\begin{equation*}
|F(\mbf{z};s)| \leq F(\mbf{\rho};\sg)\exp\left(-\frac{1}{4}\sum_{0 \leq j \leq n}\rho_j\sum_{p\in E_j} \frac{1-\cos(t_j-\tau \log p)}{p^{\sg}}\right),
\end{equation*}
which yields the claim.
\end{proof}
In relation to Lemma \ref{TruEst1}, we define $M_j(\tau,t_j) := \sum_{p \in E_j \atop p > \rho_j^{\frac{1}{\sg}}} \frac{1-\cos(\tau \log p -t_j)}{p^{\sg}}$ for each $j$. The following simple lemma will be useful in the proofs of Lemma \ref{LemSmallTrunc}.
\begin{lem} \label{TrigProp}
Let $m \in \N$. Then for $a_1,\ldots,a_m \in \R$,
\begin{align}
\sin^2\left(\sum_{1 \leq j \leq m} a_j\right) \leq m\sum_{1 \leq j \leq m} \sin^2 a_j \label{PROP1}; \\
\sin^2\left(\frac{1}{m}\sum_{1 \leq j \leq m} a_j\right) \geq \left(\prod_{1 \leq j \leq m} \sin^2 a_j \right)^{\frac{1}{m}} \label{PROP2},
\end{align}
the second equation holding in the case when $\frac{1}{\pi m}\sum_{1 \leq j \leq m} a_j \notin \mb{Z}$. Hence, for $\tau,t,\tau_1,\tau_2, t_1,t_2 \in \R$,
\begin{align*}
M_j(\tau_1, t_1) + M_j(\tau_2,t_2) &\geq \frac{1}{2}M_j(\tau_1+\tau_2,t_1+t_2) \\
M_j(\tau,t) &\geq M_j(2\tau,0)\sin^2 t.
\end{align*}
\end{lem}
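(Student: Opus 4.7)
The plan is to prove \eqref{PROP1} by induction with a triangle-type estimate for sines followed by Cauchy--Schwarz, to prove \eqref{PROP2} by Jensen's inequality applied to the log-concave function $\log \sin$, and then to deduce the two consequences for $M_j$ by applying \eqref{PROP1} and \eqref{PROP2} pointwise at each prime appearing in the defining series of $M_j$.

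For \eqref{PROP1}, I would start from the identity $\sin(a+b)=\sin a\cos b + \cos a\sin b$, which yields $|\sin(a+b)|\leq |\sin a|+|\sin b|$; an easy induction on $m$ then gives $\bigl|\sin(\sum_j a_j)\bigr|\leq \sum_j |\sin a_j|$. Squaring and applying the Cauchy--Schwarz inequality $(\sum_j |\sin a_j|)^2 \leq m\sum_j \sin^2 a_j$ produces the desired bound.

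For \eqref{PROP2}, I would use that $(\log \sin x)'' = -\csc^2 x < 0$ on $(0,\pi)$, so $\log\sin$ is concave there. Using the $\pi$-periodicity and evenness of $\sin^2$, each $a_j$ may be replaced by its reduction into $(0,\pi)$; the hypothesis $\bar a/\pi \notin \mb{Z}$ is precisely what guarantees $\sin \bar a \neq 0$, and if $\sin a_j = 0$ for some $j$ both sides vanish trivially. Jensen's inequality for $\log\sin$ then yields $\log\sin \bar a \geq m^{-1}\sum_j \log\sin a_j$, which is \eqref{PROP2} after exponentiating and squaring.

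For the first consequence, I would apply \eqref{PROP1} with $m=2$ at each prime $p$, taking $a_i = (\tau_i\log p - t_i)/2$, so that $a_1+a_2 = ((\tau_1+\tau_2)\log p - (t_1+t_2))/2$; writing $1-\cos\theta = 2\sin^2(\theta/2)$ and summing $1/p^{\sg}$ over $p\in E_j$ with $p>\rho_j^{1/\sg}$ yields $M_j(\tau_1,t_1)+M_j(\tau_2,t_2)\geq \tfrac12 M_j(\tau_1+\tau_2,t_1+t_2)$. For the second consequence, I would view $\sin^2(\tau\log p)$ as the square of a half-sum of $(\tau\log p - t)$ and $(\tau\log p + t)$ and apply \eqref{PROP2} (or a direct manipulation obtained from the same log-concavity) with angles chosen so that $\sin^2 t$ is isolated; summing $1/p^{\sg}$ over primes then gives $M_j(\tau,t) \geq M_j(2\tau,0)\sin^2 t$. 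The main obstacle I anticipate is the mod-$\pi$ reduction underlying \eqref{PROP2}: the arithmetic mean of the reduced representatives need not remain in the same period, so the avoidance condition on $\bar a/\pi$ must be used carefully to align the reductions with the original mean before invoking Jensen.
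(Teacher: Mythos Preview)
Your arguments for \eqref{PROP1} and \eqref{PROP2} are exactly those of the paper (addition formula and induction followed by Cauchy--Schwarz for the first; concavity of $\log\sin^2$ for the second), and your derivation of the first $M_j$ inequality from \eqref{PROP1} with $m=2$ and $a_i=(\tau_i\log p - t_i)/2$ is the paper's argument verbatim.

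Your sketch for the second $M_j$ inequality, however, has the substitution oriented the wrong way. Writing $\tau\log p$ as the average of $(\tau\log p - t)$ and $(\tau\log p + t)$ puts $\sin^2(\tau\log p)$ on the \emph{large} side of \eqref{PROP2}, so what you obtain is an upper bound for the summands of $M_j(2\tau,0)$, not a lower bound for those of $M_j(\tau,t)$. The paper instead takes $a_1=\tau\log p$ and $a_2=-t$, so that the average $(a_1+a_2)/2=(\tau\log p - t)/2$ sits on the left of \eqref{PROP2}; this yields
\[
\sin^2\!\Bigl(\tfrac{\tau\log p - t}{2}\Bigr)\ \geq\ |\sin(\tau\log p)|\,|\sin t|.
\]
One then needs the elementary bound $|\sin u|\geq \sin^2 u$ (a step you do not mention) to descend to $\sin^2(\tau\log p)\,\sin^2 t$; summing over $p$ and using $1-\cos u=2\sin^2(u/2)$ then gives $M_j(\tau,t)\geq M_j(2\tau,0)\sin^2 t$.

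As for the obstacle you flag---the mod-$\pi$ alignment needed to apply Jensen for $\log\sin$---the paper performs no reduction of the $a_j$ at all and simply invokes the second-derivative computation on the domain of $\log\sin^2$, so on that point your approach and the paper's are on the same footing.
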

\begin{rem} \label{REM4}
Remark that the condition on \eqref{PROP2} implies that, for each integer $l$, the vector $(a_1,\ldots,a_m)$ lies on a subspace of $\mb{R}^m$ with positive codimension; it is thus a null set with respect to Lebesgue measure on $\mb{R}^m$ for each $l$, and hence the countable union of each of these subspaces is also null. We will therefore not concern ourselves with whether or not this condition is satisfied in any of our forthcoming applications of \eqref{PROP2}.
\end{rem}
\begin{proof}
For the first property, let $A_m := \sum_{1 \leq j \leq m} a_j$ and $A_{m-1} := A_m - a_m$. Then
\begin{align*}
&\sin^2 A_m = \left|\sin\left(A_{m-1} +a_m\right)\right|^2 = \left|\sin\left(A_{m-1}\right) \cos a_m + \sin a_m \cos \left(A_{m-1}\right)\right|^2 \\
&\leq \left(\left|\sin \left(A_{m-1}\right)\right| + \left|\sin a_m\right|\right)^2 \leq \left(\sum_{1 \leq j \leq m} \left|\sin a_m\right|\right)^2 \leq m\sum_{0 \leq j \leq n} \sin^2 a_j,
\end{align*}
the second last inequality following by induction from the previous line. For the second one, we simply note that $\frac{d^2}{dt^2} \log(\sin^2(t)) = 2\frac{d}{dt}\frac{\cos t}{\sin t} = -\frac{2}{\sin^2 t}$, whenever $t \notin \mb{Z}\pi$, which implies concavity on the domain of $\log(\sin^2 t)$, as claimed. \\
The last statement applies each of the first and second claim in sequence with $m = 2$, using the elementary identity $1-\cos u = 2\sin^2(u/2)$, and $|\sin u| \geq \sin^2 u$ for any $u \in \mb{\R}$.
\end{proof}
We seek to find a lower bound for $M_j(\tau,t_j)$ in order to get a bound on $\left|\frac{F(\mbf{z};s)}{F(\rho;\sg)}\right|$ that is suitable to truncate \eqref{PerrId} with a sufficiently small error term (which we accomplish in Lemma \ref{LemAppTrunc}).  To this end, we give estimates depending on the value of the arguments of $t_j$ and $\tau$. 
First, as a corollary of Lemma \ref{ANGLES}, we may derive the following estimate, which will factor especially in the proof of Lemma \ref{LemAppTrunc}.
\begin{lem} \label{EXTRADECAY}
Suppose that $1 < \frac{|\tau|}{\sg-1} \ll e^{E_j\left(e^{\frac{1}{\sg-1}}\right)}$. Then for each $j$ we have
\begin{equation*}
\sum_{0 \leq j \leq n} \rho_jM_j(\tau,0) \geq \frac{1}{24}\sum_{0 \leq j \leq n} k_j \min\left\{1,\frac{E_j\left(e^{\frac{1}{\sg-1}}\right)}{6\log\left(\frac{|\tau|}{\sg-1}\right)}\right\}^2.
\end{equation*}
\end{lem}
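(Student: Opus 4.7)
I plan to prove the inequality one index $j$ at a time: establishing
\[
\rho_j M_j(\tau,0) \geq \frac{k_j}{24}\min\{1,\,L_j/(6T)\}^2,
\]
where $L_j := E_j(e^{1/(\sg-1)})$ and $T := \log(|\tau|/(\sg-1)) = \log(|\tau|\log y)$ with $y := e^{1/(\sg-1)}$, and then summing over $j$. The hypothesis $|\tau|/(\sg-1) \ll e^{L_j}$ translates to $T \leq L_j + O(1)$, which keeps $\min\{1,L_j/(6T)\}$ bounded below by a constant.

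First, I would restrict the (nonnegative-term) sum defining $M_j(\tau,0)$ to primes $p \leq y$, on which $p^{\sg} \leq ep$. Writing $1-\cos(\tau\log p) = 2\sin^2(\pi\theta_p(\tau))$ gives
\[
M_j(\tau,0) \;\geq\; \frac{2}{e}\sum_{\substack{p \in E_j\\ \rho_j^{1/\sg} < p \leq y}} \frac{\sin^2(\pi\theta_p(\tau))}{p}.
\]
Next, I would set $\delta := \min\{1/4,\,L_j/(4T)\}$ and restrict further to primes with $\theta_p(\tau) \in [\delta,1-\delta]$, for which the elementary bound $\sin(x) \geq 2x/\pi$ on $[0,\pi/2]$ yields $\sin^2(\pi\theta_p(\tau)) \geq 4\delta^2$. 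Applying Lemma~\ref{ANGLES} to each of $[0,\delta]$ and $[1-\delta,1]$ bounds the $1/p$-weight of primes $p \leq y$ with $\theta_p(\tau) \notin [\delta,1-\delta]$ by $2\delta T(1+o(1)) + O(1) \leq L_j/2 + o(L_j)$, by the choice of $\delta$. Absorbing the contribution $\sum_{p \leq \rho_j^{1/\sg}} 1/p = O(\log_3 x)$ from the low-prime cutoff (bounded thanks to $\rho_j \ll \log x$ from Lemma~\ref{PARAMS}), at least $L_j/2 - o(L_j)$ of the total mass $E_j(y) = L_j$ survives both restrictions.

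Combining these and using $\rho_j = (1+o(1))k_j/L_j$ from Lemma~\ref{PARAMS}, I obtain
\[
\rho_j M_j(\tau,0) \;\geq\; \frac{4\delta^2 k_j}{e}(1-o(1)) \;=\; \frac{k_j}{4e}\min\{1,\,(L_j/T)^2\}(1-o(1)),
\]
which exceeds the target $(k_j/24)\min\{1,\,L_j/(6T)\}^2$ by a factor of at least $6/e > 2$ in each of the regimes $L_j \geq 6T$, $T \leq L_j < 6T$, and $L_j < T$; summing over $j$ yields the lemma.

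The main obstacle will be uniformly absorbing the accumulated $o(1)$ and $O(\log_3 x)$ error terms into the numerical gap between $1/(4e) \approx 0.092$ and $1/24 \approx 0.042$, particularly in the borderline regime $L_j \asymp T$ where the hypothesis forces $L_j$ and $T$ to be comparable and no single term dominates. Since $E_j(x) \to \infty$ by assumption and $\log_3 x$ grows extremely slowly, this absorption is routine but must be handled carefully and uniformly in $j$.
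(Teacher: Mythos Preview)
Your proposal is correct and follows essentially the same approach as the paper: restrict to $p\le e^{1/(\sg-1)}$, use Lemma~\ref{ANGLES} to show that at most half the $1/p$-mass of $E_j$ lies in a short interval around $\theta_p(\tau)\in\{0,1\}$, bound $1-\cos(\tau\log p)$ below on the complement, and convert $\rho_jE_j(e^{1/(\sg-1)})$ to $k_j$ via Lemma~\ref{PARAMS}. Your constants differ slightly (you take $\delta=\min\{1/4,L_j/(4T)\}$ whereas the paper takes $\mu=\min\{1/2,L_j/(cT)\}$ with $c\le 12$ determined from Remark~\ref{REM2}), and you are a bit more explicit about the $p^{\sg}\le ep$ conversion and the $p>\rho_j^{1/\sg}$ cutoff, but the argument is the same.
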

\begin{proof}
Let $\mu := \min\left\{\frac{1}{2},\frac{E_j\left(e^{\frac{1}{\sg-1}}\right)}{c\log\left(\frac{|\tau|}{\sg-1}\right)}\right\}$, where $c\geq 2$ is a sufficiently large constant (whose value will be chosen momentarily), and set $\mc{I} := (-\mu,\mu)$. By Lemma \ref{ANGLES} (applied in the interval $[-\frac{1}{2},\frac{1}{2}]$ rather than $[0,1]$, or equivalently, on the two intervals $[0,\frac{1}{2}]$ and $[-\frac{1}{2},0]$ separately),  
\begin{align*}
\sum_{p \in E_j \atop \theta_p(\tau) \in \mc{I}} \frac{1}{p^{\sg}} &= \sum_{p \in E_j \atop \theta_p(\tau) \in \mc{I}, p \leq e^{\frac{1}{\sg-1}}} \frac{1}{p} + O(1) \leq c\frac{E_j\left(e^{\frac{1}{\sg-1}}\right)}{2c\log \left(\frac{|\tau|}{\sg-1}\right)} \log\left(\frac{|\tau|}{\sg-1}\right) = \frac{1}{2}E_j\left(e^{\frac{1}{\sg-1}}\right).
\end{align*}
According to the remark following the statement of Lemma \ref{ANGLES}, we can take $c \leq \frac{1}{\log 2}\cdot\left(7 + \frac{1}{2}\right) \leq 12$. It follows that
\begin{equation*}
\sum_{p \in E_j \atop \theta_p(\tau) \notin \mc{I}} \frac{1}{p^{\sg}} \geq \frac{1}{2} E_j\left(e^{\frac{1}{\sg-1}}\right) + O(1),
\end{equation*}
and, as such, since $1-\cos(\tau \log p) \geq \frac{1}{3}\mu^2$ for $\theta_p(\tau) \notin \mc{I}$,
\begin{equation*}
M_j(\tau,0)  = \left(1+o(1)\right)\sum_{p \in E_j} \frac{1-\cos(\tau \log p)}{p^{\sg}} \geq \frac{1}{3}\mu^2\sum_{p \in E_j \atop \theta_p(\tau) \notin \mc{I}} \frac{1}{p^{\sg}} \geq \frac{1}{6}\mu^2E_j\left(e^{\frac{1}{\sg-1}}\right).
\end{equation*}
The statement now follows by definition.
\end{proof}
We apply the previous lemma to determine the truncation error associated with non-zero $t_j$ and for a larger range of values of $\tau$.
\begin{lem} \label{LemSmallTrunc}
Let $z_j = \rho_je^{it_j}$, with $t_j \in [-\pi,\pi]$, and $s = \sg + i\tau$ with $\tau \in \R$ fixed. Suppose further that $x$ is sufficiently large so that $E_j\left(e^{\frac{1}{\sg-1}}\right) > 2(n+1)$. Let $\nu_j := 1$ if $|t_j| < \frac{1}{192e}$ or $\pi-|t_j| < \frac{1}{192e}$, and $\nu_j = \frac{1}{192e}$ otherwise. Then 
\begin{equation*}
|F(\mbf{z};s)| \leq F(\rho;\sg)e^{-\frac{1}{8}(G(\mbf{t},\tau) + R(\mbf{t},\tau))},
\end{equation*}
where we have defined $R(\mbf{t},\tau) := \frac{1}{100}\sum_{0 \leq j \leq n} k_j\min\left\{1,\frac{E_j\left(e^{\frac{1}{\sg-1}}\right)}{6\log\left(\frac{|\tau|}{\sg-1}\right)}\right\}^2\nu_j^2$ when $\frac{|\tau|}{\sg-1} > 2$ and zero elsewhere, $H(\mbf{t},\tau) := \frac{1}{12}\sum_{0 \leq j \leq n} k_j(t_j-\psi_j(\tau))^2$, and
\begin{equation} \label{CASES}
G(\mbf{t},\tau):= \begin{cases} 
\frac{1}{250}\left(\left(\sum_{0 \leq j \leq n} \rho_j \beta_j\left(e^{\frac{1}{\sg-1}}\right)\right)\frac{\tau^2}{(\sg-1)^2} + \sum_{0 \leq j \leq n} k_jt_j^2\right) &\text{ if $|\tau| < \frac{2}{\sg-1}$} \\ \frac{1}{384e}\left(\sum_{0 \leq j \leq n} \frac{1}{\rho_j\nu_j^2}\right)^{-1}\left(\log\left(1+\min\left\{\frac{1}{\sg-1},\frac{|\tau|}{\sg-1}\right\}^2\right)\right) &\text{ if $|\tau| \geq \frac{2}{\sg-1}$}.\end{cases}
\end{equation}
\end{lem}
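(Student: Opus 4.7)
The proof begins by invoking Lemma \ref{TruEst1}, which bounds $|F(\mbf{z};s)|/F(\mbf{\rho};\sg)$ by $\exp\bigl(-\tfrac{1}{4}\sum_j \rho_j M_j(\tau,t_j)\bigr)$. Since we aim at the factor $\exp\bigl(-\tfrac{1}{8}(G+R)\bigr)$, the task reduces to establishing a lower bound of the form
\[
\sum_{0 \leq j \leq n} \rho_j M_j(\tau,t_j) \;\geq\; \tfrac{1}{2}\bigl(G(\mbf{t},\tau) + R(\mbf{t},\tau)\bigr),
\]
in each of the two $\tau$-regimes (with $R \equiv 0$ when $|\tau|/(\sg-1) \leq 2$).

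For the small-$\tau$ regime $|\tau| < 2/(\sg-1)$, I would restrict each inner sum $M_j(\tau,t_j)$ to primes in the window $\rho_j^{1/\sg} < p \leq e^{1/(\sg-1)}$. On this window $|\tau\log p - t_j| = O(1)$, so the identity $1-\cos u = 2\sin^2(u/2)$ combined with the quadratic lower bound $\sin^2(u/2) \gg u^2$ (valid on bounded intervals) yields $\rho_j M_j(\tau,t_j) \gg \rho_j \sum_p (\tau\log p - t_j)^2/p^{\sg}$. Expanding the square and appealing to Lemma \ref{LemUnif} together with the definitions of $\alpha_j$ and $\beta_j$ produces a quadratic form in $(\tau/(\sg-1), t_j)$ with coefficients $\rho_j\beta_j(e^{1/(\sg-1)})$ for $\tau^2$, $\rho_j E_j(e^{1/(\sg-1)}) \asymp k_j$ (Lemma \ref{PARAMS}) for $t_j^2$, and a cross-term of size $\rho_j\alpha_j$. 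Completing the square in $t_j$ and using $\alpha_j^2 \ll \beta_j$ from Lemma \ref{RATIOS} to absorb the cross-term yields the claimed form of $G$.

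For the large-$\tau$ regime $|\tau|\geq 2/(\sg-1)$, I would handle the $R$- and $G$-contributions separately. The $R$-contribution arises from the pointwise inequality $M_j(\tau,t_j) \geq \sin^2(t_j)M_j(2\tau,0)$ supplied by Lemma \ref{TrigProp}. A case analysis on the position of $t_j$ (in the subintervals $|t_j|, \pi-|t_j| < 1/(192e)$, where $\nu_j := 1$, one instead bounds $M_j(\tau,t_j) \geq M_j(2\tau,0)$ up to a constant via $\sin^2 u \leq 4\sin^2(u/2)$ plus a small-$t_j$ expansion; in the complementary region $|\sin t_j| \geq \sin(1/(192e)) \geq 1/(192e) = \nu_j$) delivers the uniform bound $\rho_j M_j(\tau,t_j) \geq c\,\rho_j \nu_j^2 M_j(2\tau,0)$. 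Summing in $j$ and invoking Lemma \ref{EXTRADECAY} at $2\tau$ produces the $R$-term. For $G$, I would then apply Cauchy--Schwarz in the shape $\sum_j \rho_j\nu_j^2 M_j(2\tau,0) \geq \bigl(\sum_j \sqrt{M_j(2\tau,0)}\bigr)^2 \big/ \sum_j (\rho_j\nu_j^2)^{-1}$ to create the inverse sum, and lower bound $\bigl(\sum_j \sqrt{M_j(2\tau,0)}\bigr)^2 \geq \sum_j M_j(2\tau,0) = \sum_p (1-\cos(2\tau\log p))/p^{\sg} - O(1)$ by combining Mertens' estimate $\sum_p 1/p^{\sg} = \log(1/(\sg-1)) + O(1)$ with a standard upper bound $\sum_p \cos(2\tau\log p)/p^{\sg} \ll \log(1/\min\{|\tau|,1/(\sg-1)\}) + O(1)$ coming from $\log|\zeta(\sg+2i\tau)|$, which produces exactly the factor $\log(1+\min\{1/(\sg-1),|\tau|/(\sg-1)\}^2)$.

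\emph{Main obstacle.} The delicate part is bookkeeping the numerical constants $\tfrac{1}{250}$, $\tfrac{1}{384e}$ and $\tfrac{1}{100}$ through the chain of elementary inequalities (the Taylor bound for $\sin^2$, Lemma \ref{TrigProp}, the Cauchy--Schwarz step, and the implicit constant of at most $7$ in Lemma \ref{ANGLES} noted in Remark \ref{REM2}), since each step contributes slack. The boundary case for $\nu_j$ near $t_j = 0$ or $\pi$ is especially fragile, because the natural inequality $|\sin t_j| \geq \nu_j$ fails there and must be replaced by a direct comparison between $M_j(\tau,t_j)$ and $M_j(2\tau,0)$. Finally one must check compatibility at the threshold $|\tau| = 2/(\sg-1)$ so that the two piecewise bounds agree up to the implied constants.
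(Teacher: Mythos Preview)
Your outline is essentially the paper's own argument: invoke Lemma~\ref{TruEst1}, then in the small-$\tau$ regime expand $1-\cos u \geq c u^2$ on the window $p \leq e^{1/(\sg-1)}$, complete the square, and kill the cross-term via $\alpha_j^2 \ll \beta_j$ (Lemma~\ref{RATIOS}); in the large-$\tau$ regime reduce to $M_j(2\tau,0)$ via Lemma~\ref{TrigProp}, apply Cauchy--Schwarz to produce the factor $(\sum_j 1/(\rho_j\nu_j^2))^{-1}$, and lower-bound $\sum_j M_j(2\tau,0)$.

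Two minor points where your sketch diverges from the paper. First, for the lower bound on $\sum_j M_j(2\tau,0) = \sum_p (1-\cos(2\tau\log p))/p^{\sg}$ the paper does \emph{not} use $\log|\zeta(\sg+2i\tau)|$: it computes the sum directly as an integral via the prime number theorem with Korobov--Vinogradov error, summing over unit intervals in $\tau\log p$ to extract $\frac{1}{24e}\log(\min\{1/(\sg-1),|\tau|/(\sg-1)\}^2)$. Your $\zeta$-route is a legitimate alternative and arguably cleaner, though you should verify that the implicit constant survives for $|\tau|$ up to $L=\log^A x$ (where $\log|\zeta(\sg+2i\tau)|$ contributes $O(\log_3 x)$, still negligible). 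Second, and more substantively, your handling of the boundary case $\nu_j=1$ (i.e.\ $|t_j|$ or $\pi-|t_j|$ small) is too vague: the inequality ``$\sin^2 u \leq 4\sin^2(u/2)$'' goes the wrong way for what you need. The paper instead uses the \emph{symmetry} inequality $M_j(\tau,t_j)+M_j(\tau,-t_j) \geq \tfrac{1}{2}M_j(2\tau,0)$ from \eqref{PROP1} together with a mean-value bound $|M_j(\tau,t_j)-M_j(\tau,-t_j)| \leq 2|t_j|\sum_{p\in E_j} p^{-\sg} \ll |t_j|E_j(e^{1/(\sg-1)})$, so that $2M_j(\tau,t_j) \geq \tfrac{1}{2}M_j(2\tau,0) - 3CE_j(e^{1/(\sg-1)})$ when $|t_j|\leq C=\tfrac{1}{192e}$. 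This is how the constant $\tfrac{1}{384e}$ emerges and why the hypothesis $E_j(e^{1/(\sg-1)}) > 2(n+1)$ enters.
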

\begin{proof}
Consider first the case that $\frac{|\tau|}{\sg-1} < 2$, so that $|\tau \log p| \leq 2$ for each $p \leq e^{\frac{1}{\sg-1}}$.  Note that the inequality $1-\cos u \geq \frac{1}{50}u^2$ holds uniformly for $|u| \leq 2+\pi$. Let $Z := e^{\frac{1}{\sg-1}}$. Then, as $\frac{c_1}{\sg-1} \leq \sum_{p \leq Y} \frac{\log p}{p^{\sg}} \leq \frac{c_2}{\sg-1}$, for $c_2 > c_1 > 0$ constants,
\begin{align*}
&\sum_{p \in E_j} \frac{1-\cos(\tau \log p - t_j)}{p^{\sg}} 
\geq \frac{1}{50}\left(\tau^2 \sum_{p \in E_j \atop p \leq e^{\frac{1}{\sg-1}}} \frac{\log^2 p}{p^{\sg}} - 2\tau t_j \sum_{p \in E_j \atop p \leq e^{\frac{1}{\sg-1}}} \frac{\log p}{p^{\sg}} + t_j^2 E_j\left(e^{\frac{1}{\sg-1}}\right)\right) \\
&\geq \frac{1}{50}\left(\sum_{0 \leq j \leq n} \left(\frac{1}{c_1^2(\sg-1)^2}\rho_j\beta_j(Z)\tau^2 - 2\tau t_j \frac{1}{c_2(\sg-1)}\rho_j\alpha_j(Z) + \rho_j\left(\sum_{p \in E_j} \frac{1}{p^{\sg}}\right)t_j^2\right)\right).
\end{align*}
Now we note that, given a quadratic polynomial of the form $Au^2 -2Buv + Cv^2$, where $A,C > 0$, we have
\begin{equation*}
Au^2 -2Buv + Cv^2 = \frac{1}{A}\left((Au)^2 - 2(Bv)(Au) + (Bv)^2\right) + \left(C-\frac{B^2}{A}\right)v^2 \geq \left(C-\frac{B^2}{A}\right)v^2,
\end{equation*}
and, switching the roles of $u$ and $v$ in the above, we also have $Au^2 - 2Buv + Cv^2 \geq \left(A-\frac{B^2}{C}\right)u^2$. 
Taking $A = \frac{1}{2(\sg-1)^2}\rho_j\beta_j(Z)$, $B := \frac{1}{\sg-1}\rho_j\alpha_j(Z)$ and $C := k_j$, $u = \tau$ and $v = t_j$ for each $j$, we split our sum into two pieces and bound from below using each of the bounds above.  By Lemma \ref{RATIOS} there exists some $\eta > 0$ such that $\alpha_j(Z)^2 \leq \eta \beta_j(Z)$. Hence, we have
\begin{align*}
A-\frac{B^2}{C} &= \frac{1}{c_1^2(\sg-1)^2} \left(\frac{1}{2}\rho_j\beta_j(Z) - \frac{c_1^2}{2c_2E_j\left(e^{\frac{1}{\sg-1}}\right)}\rho_j\alpha_j(Z)^2\right) \\
&\geq \frac{1}{2c_1^2(\sg-1)^2}\rho_j\beta_j(Z) \left(1-\frac{2c_1^2\eta}{c_2E_j\left(e^{\frac{1}{\sg-1}}\right)}\right); \\
C-\frac{B^2}{A} &= k_j - \frac{\rho_j^2\alpha_j(Z)^2}{\rho_j\beta_j(Z)} \geq k_j - \eta\rho_j \geq \frac{1}{2}k_j\left(1-\frac{\eta}{E_j\left(e^{\frac{1}{\sg-1}}\right)}\right).
\end{align*}
Now, 
\begin{align*}
\sum_{p \leq \max_j \rho_j^{\frac{1}{\sg}}} \frac{1-\cos(\tau \log p - t_j)}{p^{\sg}} &\leq \frac{1}{2}\sum_{p \leq \log^{\frac{1}{3}} x} (\tau \log p-t_j)^2 \\
&\leq \frac{1}{2}\left(\tau^2\sum_{p \leq \log^{\frac{1}{3}} x} \frac{\log^2 p}{p^{\sg}} + t_j^2 E_j\left(\log^{\frac{1}{3}} x\right)\right),
\end{align*}
which is half as large as the lower bounds in each of $\tau^2$ and $t_j^2$ that we have just proven. Applying these lower bounds for each $j$ and assuming that $x$ is sufficiently large, we thus have
\begin{align*}
\sum_{0 \leq j\leq n} \rho_j M_j(\tau, t_j) &\geq \frac{1}{250}\sum_{0 \leq j \leq n} \left(\rho_j\beta_j(Z)\frac{\tau^2}{(\sg-1)^2} + k_jt_j^2\right), 
\end{align*}
which proves the estimate for the range $|\tau| < 2(\sg-1)$. \\
Consider now the case where $\frac{|\tau|}{\sg-1} \geq 2$. We remark first that by Cauchy-Schwarz,
\begin{align*}
&\sum_{0 \leq j \leq n} \rho_j \sum_{p \in E_j \atop p > \max_j \rho_j^{\frac{1}{\sg}}} \frac{\sin^2\left(\frac{1}{2}(\tau \log p - t_j)\right)}{p^{\sg}} = \sum_{p > \max_j \rho_j^{\frac{1}{\sg}}} \frac{1}{p^{\sg}}\sum_{0 \leq j \leq n} \rho_j \sin^2\left(\frac{1}{2}(\tau \log p - t_j)\omega_{E_j}(p)\right) \\
&\geq \left(\sum_{0 \leq j\leq n} \frac{1}{\rho_j}\right)^{-1} \sum_{0 \leq j \leq n} \sum_{p \in E_j\atop p > \max_j \rho_j^{\frac{1}{\sg}}} \frac{|\sin(\frac{1}{2}(\tau \log p - t_j))|}{p^{\sg}} \\
&\geq \left(\sum_{0 \leq j\leq n} \frac{1}{\rho_j}\right)^{-1} \sum_{0 \leq j \leq n} \sum_{p \in E_j \atop p > \max_j \rho_j^{\frac{1}{\sg}}} \frac{\sin^2(\frac{1}{2}(\tau \log p - t_j))}{p^{\sg}}.
\end{align*}
In other words, 
\begin{equation} \label{CAUCHY}
\sum_{0 \leq j \leq n} \rho_j M_j(\tau,t_j) \geq \left(\sum_{0 \leq j \leq n} \frac{1}{\rho_j}\right)^{-1}\sum_{0 \leq j \leq n} M_j(\tau,t_j).
\end{equation}
Suppose momentarily that $t_j = 0$, and et $Y$ be a parameter such that $\max_j \rho_j^{\frac{1}{\sg}} < Y \leq e^{\frac{1}{\sg-1}}$. By the Prime Number Theorem with error term from the Korobov-Vinogradov zero-free region (see the notes to chapter II.4 of \cite{Ten2}),
\begin{align*}
&\sum_{0 \leq j \leq n} M_j(\tau,0) = 2\sum_{p \in E_j\atop p > \max_j \rho_j^{\frac{1}{\sg}}} \frac{\sin^2(\frac{1}{2}\tau \log p)}{p^{\sg}} \\
&\geq 2\left(\int_{Y}^{e^{\frac{1}{\sg-1}}} e^{-(\sg-1)\log u} \sin^2\left(\frac{1}{2}\tau \log u\right) \frac{du}{u \log u} + O\left(\int_Y^{\infty} e^{-(\log u)^{\frac{3}{5}+\e} - (\sg-1)\log u} \frac{du}{u} \right)\right) \\
&= 2\left(\int_{\log Y}^{\frac{1}{\sg-1}} e^{-(\sg-1)v} \sin^2\left(\frac{1}{2}\tau v\right) \frac{dv}{v} + O\left(e^{-(\log Y)^{\frac{3}{5}+\e} -(\sg-1)\log Y}\frac{1}{(\sg-1)\log Y}\right)\right).
\end{align*}
In the main term, noting symmetry between $\tau$ and $-\tau$,
\begin{align*}
\int_{\log Y}^{\frac{1}{\sg-1}} e^{-(\sg-1)v} \sin^2\left(\frac{1}{2}\tau v\right) \frac{dv}{v} &= 2\sum_{|\tau|\frac{\log Y}{4\pi} < l \leq \frac{|\tau|}{4\pi(\sg-1)}} \int_0^{\pi} e^{-\frac{2(\sg-1)}{|\tau|}(2\pi l+ v)}\sin^2 v \frac{dv}{v+2\pi l}  \\
&\geq \sum_{|\tau|\frac{\log Y}{4\pi} + 1 < l \leq \frac{|\tau|}{4\pi(\sg-1)}} \frac{e^{-4\pi\frac{(\sg-1)}{|\tau|}l}}{\pi l} \int_0^{\pi} \sin^2 v dv \\
&= \frac{1}{2}\sum_{\frac{|\tau|\log Y}{4\pi} + 1 < l \leq \frac{|\tau|}{4\pi(\sg-1)}} \frac{e^{-4\pi\frac{(\sg-1)}{|\tau|}l}}{l} \geq \frac{1}{4e}\log\left(\frac{1}{(\sg-1)\log Y}\right).
\end{align*}
We note that if $k_j \ll \left(1-\e\right)\frac{\log x}{\log_2 x}$ for $\e \in (0,1)$ (which is surely guaranteed by the assumptions of Theorem \ref{THM1}) then
\begin{equation*}
\frac{k_j}{E_j\left(e^{\frac{1}{\sg-1}}\right)} \ll \log x \ll e^{\frac{1}{\sg-1}},
\end{equation*}
as $(\sg-1) \ll (1-\e)\log_2^{-1} x$ for our choices of $k_j$. Hence, it follows that if $Y := e^{\max\left\{\frac{1}{|\tau|}, \left(\frac{1-\e}{(\sg-1)}\right)\right\}}$ then the estimates for $\rho_j$ in Lemma \ref{PARAMS} show that $Y > \max_j \rho_j$. Note also that the error term from the prime number theorem contributes an $O(1)$ term for this choice because $(\sg-1)\log Y \leq \max\{\frac{\sg-1}{|\tau|},1-\e\} \leq 1$, as $\frac{\sg-1}{|\tau|} < 1$. Finally, 
\begin{equation*}
\log(\frac{1}{(\sg-1)\log Y}) \geq \min\left\{\eta \log\left(\frac{1}{\sg-1}\right), \log\left(\frac{|\tau|}{\sg-1}\right)\right\}.
\end{equation*}
Therefore,
\begin{equation*}
\sum_{0 \leq j \leq n} \sum_{p \in E_j\atop p > \max_j \rho_j^{\frac{1}{\sg}}} \frac{1-\cos(\tau \log p)}{p^{\sg}} \geq \frac{1}{24e} \log\left(\min\left\{\frac{1}{(\sg-1)^2},\left(\frac{\tau}{\sg-1}\right)^2\right\}\right).
\end{equation*}
We now consider the case when the $t_j$ are not necessarily zero.  By \eqref{PROP2}, we have $M_j(\tau,t_j) \geq \sin^2 t_j M_j(2\tau,0)$, and by \eqref{PROP1}, $M_j(\tau,t_j) + M_j(\tau,-t_j) \geq \frac{1}{2}M_j(2\tau,0)$.  Set $C := \frac{1}{192e}$ so that $\nu_j := C$ if $|t_j| > C$ and $\pi-|t_j| > C$, and $\nu_j = 1$ otherwise.  By the mean value theorem, 
\begin{equation*}
|\cos(\tau \log p - t_j)-\cos(\tau \log p + t_j)| \leq |e^{2it_j}-1| = 2|t_j|, 
\end{equation*}
when $|t_j| \leq C$. Therefore, in this case, we have
\begin{align*}
2M_j(\tau,t_j) &\geq M_j(\tau,t_j) + M_j(\tau,-t_j) - \left|M_j(\tau,t_j)-M_j(\tau,-t_j)\right| \\
&\geq \frac{1}{2}M_j(2\tau,0) - \sum_{p \in E_j \atop p > \max_j\rho_j^{\frac{1}{\sg}}} \frac{1}{p^{\sg}}|\cos(\tau \log p - t_j)-\cos(\tau\log p+t_j)| \\
&\geq \frac{1}{2}M_j(2\tau,0) - 3|t_j|E_j\left(e^{\frac{1}{\sg-1}}\right) \geq \frac{1}{2}M_j(2\tau,0) - 3CE_j\left(e^{\frac{1}{\sg-1}}\right).
\end{align*}
Moreover, as $\sin^2 u \geq \frac{2}{\pi} u$ for $|u| < \frac{\pi}{2}$, we get
\begin{align} 
M_j(\tau,t_j) &\geq \frac{1}{2}\max\left\{\frac{1}{2}\sin^2 t_j M_j(2\tau,0), \frac{1}{2}\left(\frac{1}{2}M_j(2\tau,0)-2CE_j\left(e^{\frac{1}{\sg-1}}\right)\right)\right\} \nonumber\\
&\geq \frac{1}{2}\max\left\{\frac{2}{\pi^2}t_j^2M_j(2\tau,0),\frac{1}{2}\left(\frac{1}{2}M_j(2\tau,0)-2CE_j\left(e^{\frac{1}{\sg-1}}\right)\right)\right\} \label{REMOVET},
\end{align}
for any $\tau$. Therefore, combining \eqref{CAUCHY} with \eqref{REMOVET}, we find that when $|\tau| \gg (\sg-1)$,
\begin{align*}
\sum_{0 \leq j \leq n} \rho_jM_j(\tau,t_j) &\geq \frac{1}{2}\left(\sum_{0 \leq j \leq n} \frac{1}{\rho_j\nu_j^2}\right)^{-1} \left(\frac{1}{2}\sum_{0 \leq j \leq n} \left(M_j(2\tau,0)-2CE_j\left(e^{\frac{1}{\sg-1}}\right)\right)\right) \\
&= \frac{1}{384e}\left(\sum_{0 \leq j \leq n} \frac{1}{\rho_j\nu_j^2}\right)^{-1}\log\left(\frac{1}{\sg-1}\right).
\end{align*}
If we define 
\begin{equation*}
\tilde{M}_j(\tau,t_j) := \sum_{p \in E_j \atop \max_j \rho_j < p \leq e^{\frac{|\tau|}{\sg-1}}} \frac{1-\cos(\tau \log p-t_j)}{p^{\sg}}
\end{equation*}
 then by \eqref{PROP1}, 
\begin{equation*}
\tilde{M}_j(\tau,t_j) + \tilde{M}_j(\tau,-t_j) \geq \sum_{p \in E_j \atop \max_j < p \leq e^{\frac{|\tau|}{\sg-1}}} \frac{1-\cos(2\tau \log p)}{p^{\sg}};
\end{equation*}
thus, by the same argument as above with $E_j\left(e^{\frac{|\tau|}{\sg-1}}\right)$ in place of $E_j\left(e^{\frac{1}{\sg-1}}\right)$, we get (since $M_j(\tau,t_j) \geq \tilde{M}_j(\tau,t_j)$, obviously)
\begin{align*}
\sum_{0 \leq j \leq n} \rho_jM_j(\tau,t_j) &\geq \frac{1}{4}\left(B(\mbf{t})\sum_{0 \leq j \leq n} \sum_{p \in E_j \atop \max_j \rho_j^{\frac{1}{\sg}} < p \leq e^{\frac{|\tau|}{\sg-1}}} \frac{1-\cos(2\tau \log p)}{p^{\sg}}-4CE_j\left(e^{\frac{|\tau|}{\sg-1}}\right)\right) \\
&= \frac{1}{384e}B(\mbf{t})\log\left(\frac{|\tau|}{\sg-1}\right),
\end{align*}
where $B(\mbf{t}):= \left(\sum_{0 \leq j \leq n} \frac{1}{\rho_j\nu_j^2}\right)^{-1}$. In the particular case that $2 < \frac{|\tau|}{\sg-1} \ll 1$, combining \eqref{REMOVET} with the conclusion of Lemma \ref{EXTRADECAY} and the definition of $\nu_j$ gives
\begin{align*}
\sum_{0 \leq j \leq n} \rho_jM_j(\tau,t_j) &\geq \frac{1}{4}\left(\sum_{0 \leq j \leq n} \rho_j\nu_j^2 \left(M_j(2\tau,0)-2CE_j\left(e^{\frac{1}{\sg-1}}\right)\right)\right) \\
&\geq \frac{1}{4}\left(\frac{1}{24}-2C\right)\sum_{0 \leq j \leq n} \rho_j\nu_j^2E_j\left(e^{\frac{1}{\sg-1}}\right).
\end{align*}
By Lemmas \ref{LemUnif} and \ref{PARAMS}, $\rho_jE_j\left(e^{\frac{1}{\sg-1}}\right) \geq \frac{1}{2}k_j$, so the proof is complete upon identifying this last term as $R\left(\mbf{t},\tau)\right)$, and using $2\sum_{0 \leq j \leq n} \rho_jM_j(\tau,t_j) \geq G(\mbf{t},\tau) + R(\mbf{t},\tau)$.
\end{proof}
As described earlier, we shall proceed with estimating \eqref{PerrId} by approximating it by an integral on an $(n+2)$-dimensional box of the form $[-T,T] \times \prod_{0 \leq j \leq n} [-\theta_j,\theta_j]$, where $T,\theta_0,\ldots,\theta_n > 0$ are parameters that we shall choose (see Lemma \ref{LemInt}).  To determine how suitable of an approximation this will be, we shall first calculate the integral contributed by this box, after which we will have a frame of reference for determining which error terms are acceptable.  It will be necessary first to determine the approximate dimensions of the box in question. The following lemma provides us with a hint at choosing these data.
\begin{lem} \label{ERR}
Let $T,\theta_0,\theta_1,\ldots,\theta_n > 0$. Let $h(\mbf{z};s)$ be defined as in \eqref{hDef}. Let $z_j = \rho_je^{it_j}$ for each $0 \leq j \leq n$ and $s = \sg + i\tau$, where $|t_j| \leq \theta_j$, and $|\tau| \leq T$. Set $\delta := \frac{T}{\sg-1}$, and assume that $\delta < 1$. Then
\begin{equation}
h(\mbf{z};s) =  -\frac{\tau^2}{2} \sum_{0 \leq j \leq n} \rho_j \left(\sum_{p \in E_j} \frac{\log^2 p}{p^{\sg}} + O\left(\sum_{0 \leq j \leq n} Q_j\right)\right) \label{HZS},
\end{equation}
where
\begin{equation*}
Q_j := \frac{\theta_j^2}{\log(\rho_j+1)} + T(T+\theta)\log(\rho_j+1) + \delta^3\gamma_j +\theta_j\alpha_j\delta.
\end{equation*}
\end{lem}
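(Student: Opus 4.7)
The plan is to decompose $h(\mbf{z};s)=I_1+I_2+I_3$ along the lines of \eqref{hDef2}--\eqref{hDef}, where $I_1:=\sum_j\int_{\rho_j}^{z_j}(z_j-w_j)f_{z_jz_j}(w_j;\sg)\,dw_j$ is the pure $\mbf{z}$ second-order remainder, $I_2:=i\tau\sum_j(z_j-\rho_j)f_{z_js}(\mbf{\rho};\sg)$ the $\tau$-$\mbf{z}$ cross term, and $I_3:=-\int_0^{\tau}(\tau-\tau')f_{ss}(\mbf{z};\sg+i\tau')\,d\tau'$ the $\tau$-Taylor remainder. The claimed main term $-\tfrac{\tau^2}{2}\sum_j\rho_j\sum_{p\in E_j}\log^2 p/p^\sg$ comes entirely from $I_3$: replacing $f_{ss}(\mbf{z};\sg+i\tau')$ by $f_{ss}(\mbf{\rho};\sg)$ collapses the $\tau'$-integral to $-\tfrac{\tau^2}{2}f_{ss}(\mbf{\rho};\sg)$, and a direct evaluation of $f_{ss}(\mbf{\rho};\sg)$ via the explicit derivative formula (splitting each prime sum at the transition $p^\sg\sim\rho_j$) yields the stated form modulo small-prime residuals of size $O(\log(\rho_j+1))$ that are absorbed into $Q_j$.

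Each term in $Q_j$ will then be traced to a specific error source. The bound on $I_1$ uses \eqref{ZiZi} together with the transition-split estimate $|f_{z_jz_j}(w_j;\sg)|\ll 1/(\rho_j\log(\rho_j+1))$ on the arc $|w_j|=\rho_j$ (with \eqref{P2} controlling the tail); since both arc length and $|z_j-w_j|$ are $\le\rho_j\theta_j$, this gives $|I_1|\ll\sum_j\rho_j\theta_j^2/\log(\rho_j+1)$, the first $Q_j$-term. The bound on $I_2$ uses \eqref{ZiS} together with $|f_{z_js}(\mbf{\rho};\sg)|\ll\alpha_j/(\sg-1)$, yielding $|I_2|\le T\sum_j\rho_j\theta_j\alpha_j/(\sg-1)=\sum_j\rho_j\theta_j\alpha_j\delta$, the fourth $Q_j$-term. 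The $\delta^3\gamma_j$ term arises from expanding $f_{ss}(\mbf{z};\sg+i\tau')-f_{ss}(\mbf{\rho};\sg)$ in $\tau'$ via the mean-value theorem: this brings in $f_{sss}(\mbf{\rho};\sg)$, whose size is $\ll\sum_j\rho_j\gamma_j/(\sg-1)^3$ by the definition of $\gamma_j$ together with \eqref{POWERS}; integrating against $(\tau-\tau')$ over $[0,\tau]$ furnishes the $\delta^3\gamma_j$ contribution. The remaining $T(T+\theta)\log(\rho_j+1)$ piece arises from combining the $\mbf{w}$-variation of $f_{ss}(\mbf{z};\sg+i\tau')$ in $I_3$ with the third-order cross contribution $\tau^2(z_j-\rho_j)f_{ssz_j}$ from the full Taylor expansion of $f$: both pieces carry the same $\log(\rho_j+1)$ scale, traceable to the transition-split bound on $f_{ssz_j}$.

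The main obstacle will be carrying out the prime-sum splittings at $p^\sg\sim\rho_j$ uniformly across all the derivative formulae ($f_{z_jz_j}$, $f_{z_js}$, $f_{ss}$, $f_{sss}$, $f_{ssz_j}$) and aggregating the resulting bounds into the compact four-term expression $Q_j$; each denominator $(p^\sg-1+z_j)^k$ changes character around the transition and must be estimated on both sides. The most delicate point is isolating the $T(T+\theta)\log(\rho_j+1)$ contribution without double-counting, since the $\tau'$-variation and the $\mbf{w}$-variation of $f_{ss}$ in $I_3$ share the same logarithmic scale and must be handled together rather than separately. A secondary subtlety is keeping the $\tau'$-expansion of $f_{ss}$ in mean-value form rather than going to a second-order Taylor expansion, so as not to generate a spurious $f_{ssss}$-type term that would degrade the $\delta^3\gamma_j$ bound; this is done by direct application of the mean-value theorem to the difference $f_{ss}(\mbf{z};\sg+i\tau')-f_{ss}(\mbf{\rho};\sg)$.
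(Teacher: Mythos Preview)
Your decomposition and overall plan match the paper's, but there is a genuine gap in how you handle the $\mbf{z}$-variation of $I_3$. You assert that $f_{ssz_j}$ has ``$\log(\rho_j+1)$ scale,'' but this is incorrect: the dominant large-prime contribution to
\[
f_{ssz_j}(\mbf{\rho};\sg)=\sum_{p\in E_j}\frac{p^\sg\log^2 p\,(p^\sg+1-\rho_j)}{(p^\sg-1+\rho_j)^3}
\]
is $\sum_{p\in E_j}\frac{\log^2 p}{p^\sg}=\beta_j/(\sg-1)^2$, not $\log(\rho_j+1)$. Consequently, replacing $f_{ss}(\mbf{z};\sg+i\tau')$ by $f_{ss}(\mbf{\rho};\sg+i\tau')$ in $I_3$ and integrating against $(\tau-\tau')$ produces an error of size $\rho_j\theta_j\delta^2\beta_j$, which is not covered by any term of $Q_j$ under the sole hypothesis $\delta<1$ (the regime $\delta\beta_j\gg\alpha_j$ is permitted, cf.\ Lemma~\ref{RATIOS}).

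The paper avoids this by \emph{not} separating the $\mbf{z}$-variation of $I_3$ from $I_2$. It first computes $III_j=z_j\sum_{p\in E_j}\frac{1-i\tau\log p-e^{-i\tau\log p}}{p^\sg}+O(\rho_j T^2\log(\rho_j+1))$ and $i\tau\, II_j=-i\tau(z_j-\rho_j)\sum_{p\in E_j}\frac{\log p}{p^\sg}+O(\rho_j T\theta_j\log(\rho_j+1))$ exactly, and \emph{then} combines them. The term $-i\tau(z_j-\rho_j)\sum\frac{\log p}{p^\sg}$ from $II_j$ cancels against the corresponding piece $i\tau(z_j-\rho_j)\sum\frac{\log p}{p^\sg}$ inside $III_j$, leaving $-(z_j-\rho_j)\sum_{p\in E_j}\frac{1-e^{-i\tau\log p}}{p^\sg}$ as the only surviving $\mbf{z}$-dependent remainder. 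That combined remainder is then bounded via the \emph{first-order} estimate $|1-e^{-i\tau\log p}|\le|\tau|\log p$, yielding $\rho_j\theta_j\alpha_j\delta$ rather than the second-order $\rho_j\theta_j\delta^2\beta_j$ that your direct $f_{ssz_j}$ route produces. In short, the order of operations matters: you must keep $I_2$ together with the $(z_j-\rho_j)$-part of $I_3$ and bound their sum \emph{before} expanding in $\tau$, or you lose a factor of $\alpha_j/(\delta\beta_j)$ in that error term.
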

\begin{proof}
Recall that $h$ is defined as 
\begin{align*}
h(\mbf{z};s) &= \sum_{0 \leq j \leq n} \int_{\rho_j}^{z_j} dw_j (z_j-w_j)f_{z_jz_j}(w_j;\sg) \\
&+ i\tau \sum_{0 \leq j \leq n} \int_{\rho_j}^{z_j}dw_jf_{z_js}(w_j;\sg) -\int_0^{\tau}d\tau'(\tau-\tau')f_{ss}(\mbf{z};\sg + i\tau') \\
&=: \sum_{0 \leq j \leq n} \left(I_j + i\tau II_j -III_j\right).
\end{align*}
We first estimate $II_j$ for each $0 \leq j \leq n$. Using \eqref{ZiS}, we find that
\begin{align}
II_j &= -\sum_{p \in E_j} p^{\sg} \log p \int_{\rho_j}^{z_j} \frac{dw_j}{(p^{\sg}-1+w_j)^2} = -(z_j-\rho_j)\sum_{p \in E_j} \frac{p^{\sg} \log p}{(p^{\sg}-1+\rho_j)(p^{\sg}-1 + z_j)} \nonumber\\
&= -(z_j-\rho_j)\sum_{p \in E_j} \left(\frac{\log p}{p^{\sg}-1+\rho_j} - (z_j-1)\frac{\log p}{(p^{\sg}-1+\rho_j)(p^{\sg}-1+z_j)}\right) \\
&= -(z_j-\rho_j)\sum_{p \in E_j} \left(\frac{\log p}{p^{\sg}} - \log p\left(\frac{\rho_j-1}{p^{\sg}(p^{\sg}-1+\rho_j)} + \frac{z_j-1}{(p^{\sg}-1+\rho_j)(p^{\sg}-1+z_j)}\right)\right)  \label{EST} \\
&=: -(z_j-\rho_j)\left(\sum_{p \in E_j} \frac{\log p}{p^{\sg}} - (T_1+T_2)\right).
\end{align}
Let $u > 0$. Observe that by Lemma \ref{SIMPLE}
\begin{align}
u\sum_{p \in E_j \atop p \leq u} &\frac{\log p}{p^{\sg}(p^{\sg}+u)} \ll \sum_{p \leq u} \frac{\log p}{p} \ll \log(1+u) \label{S1}\\
u\sum_{p \in E_j \atop p > u} &\frac{\log p}{p^{\sg}(p^{\sg}+u)} \ll \rho_j\sum_{p > u} \frac{\log p}{p^{2\sg}} \ll 1 \label{S2}.
\end{align}
Applying these two estimates with $u = \rho_j-1$ and $u := |z_j-1| \leq \rho_j-1$ gives an upper bound for $T_1$ and $T_2$, respectively. Hence, inserting these estimates into the two series in the brackets of \eqref{EST} and using $|z_j-\rho_j| = \rho_j|t_j|$ (by the mean value theorem), we get
\begin{equation}
II_j = -(z_j-\rho_j)\sum_{p \in E_j} \frac{\log p}{p^{\sg}} + O\left(\rho_j\theta_j \log (\rho_j+1)\right). \label{IIj}
\end{equation}
Next, we estimate $I_j$.  Integrating by parts, we have
\begin{align*}
I_j &= \sum_{p \in E_j} \int_{\rho_j}^{z_j} dw_j\frac{z_j-w_j}{(p^{\sg}-1+w_j)^2} = \sum_{p \in E_j} \left(\frac{z_j-\rho_j}{p^{\sg}-1+\rho_j} - \int_{\rho_j}^{z_j} \frac{dw_j}{p^{\sg}-1+w_j}\right) \\
&= \sum_{p \in E_j} \left(\frac{z_j-\rho_j}{p^{\sg}-1+\rho_j} - \log\left(1+\frac{z_j-\rho_j}{p^{\sg}-1+\rho_j}\right)\right),
\end{align*}
where we continue to use the principal branch of logarithm. Taylor expanding the logarithm, we get
\begin{equation*}
I_j = \frac{1}{2}(z_j-\rho_j)^2\sum_{p \in E_j} \frac{1}{(p^{\sg}-1+\rho_j)^2} + O\left(\rho_j^3\theta_j^3 \sum_{p \in E_j} \frac{1}{(p^{\sg}-1+\rho_j)^3}\right).
\end{equation*}
From \eqref{P2} we get that for $l \in \left\{2,3\right\}$,
\begin{align*}
\sum_{p \in E_j} \frac{1}{(p^{\sg}-1+\rho_j)^l} &\ll \frac{1}{\rho_j^{l-1} \log (\rho_j+1)},
\end{align*}
so that
\begin{equation}
I_j \ll \frac{\rho_j \theta_j^2}{\log(\rho_j+1)}. \label{Ij}
\end{equation}
Finally, we estimate $III_j$.  First, as
\begin{align*}
f_s(\mbf{z};s) &= -\sum_{0\leq j \leq n} z_j\sum_{p\in E_j} \frac{p^s \log p}{(p^s-1)(p^s-1+z_j)} \\
&= -\sum_{0 \leq j \leq n}z_j \sum_{p \in E_j}\left( \frac{\log p}{p^s-1} - (z_j-1)\frac{\log p}{(p^s-1)(p^s-1+z_j)}\right),
\end{align*}
differentiating with respect to $s$ again yields
\begin{align*}
f_{ss}(\mbf{z};s) &= \sum_{0 \leq j \leq n} z_j\sum_{p\in E_j} \left(\frac{p^s \log^2 p}{(p^s-1)^2} - (z_j-1)\frac{p^s \log^2 p}{(p^s-1)(p^s-1+z_j)}\left(\frac{1}{p^s-1} + \frac{1}{p^s-1+z_j}\right)\right) \\
&= \sum_{0 \leq j \leq n} z_j\sum_{p \in E_j} (\frac{\log^2 p}{p^s} + \log^2 p\left(\frac{1}{p^s(p^s-1)} + \frac{1}{(p^s-1)^2}\right) \\
&-(z_j-1)\left(\frac{1}{p^s-1} + \frac{1}{p^s-1+z_j}\right)\left(\frac{\log^2 p}{p^s-1+z_j} + \frac{\log^2 p}{(p^s-1)(p^s-1+z_j)}\right)) \\
&= \sum_{0\leq j \leq n} z_j \sum_{p \in E_j} \frac{\log^2 p}{p^s} + O\left(\sum_{0\leq j \leq n} \rho_j \log^2(\rho_j+1)\right),
\end{align*}
the last estimate following from \eqref{S1} and \eqref{S2} (with $\log p$ replaced by $\log^2 p$). Returning to $III_j$, for each prime $p$, applying integration by parts as before,
\begin{align*}
\int_0^{\tau} d\tau' (\tau-\tau')e^{-i\tau'\log p} = -\frac{i\tau}{\log p} + \frac{i}{\log p} \int_0^{\tau} e^{-i\tau' \log p} d\tau' = -\frac{i\tau}{\log p} + \frac{1}{\log^2 p} (1-e^{-i\tau \log p}).
\end{align*}
Hence, summing over all primes and applying Fubini's theorem,
\begin{equation*}
\int_0^{\tau} d\tau'(\tau-\tau')f_{ss}(\mbf{z};\sg + i\tau') = \sum_{p \in E_j} \frac{1 - i\tau \log p - e^{-i\tau \log p}}{p^{\sg}} + O\left(\log(\rho_j+1)T^2\right) .
\end{equation*}
It follows that
\begin{equation}
III_j = z_j\sum_{p \in E_j} \frac{1-i\tau \log p - e^{-i\tau \log p}}{p^{\sg}} + O\left(\rho_j\log(\rho_j+1)T^2\right). \label{IIIj}
\end{equation}
Combining \eqref{IIj}, \eqref{Ij} and \eqref{IIIj}, we have
\begin{align*}
h(\mbf{z};s) = i\tau \sum_{0 \leq j \leq n} \rho_j \sum_{p \in E_j} \frac{\log p}{p^{\sg}} &-\sum_{0 \leq j \leq n} (\rho_j + (z_j-\rho_j)) \sum_{p \in E_j} \frac{1-e^{-i\tau \log p}}{p^{\sg}} \\
&+ O\left(\sum_{0 \leq j \leq n} \rho_j\left(\frac{\theta_j^2}{\log(\rho_j+1)} + 	T(T+\theta)	\log(\rho_j+1)\right)\right).
\end{align*}
To treat the term in $z_j-\rho_j$ above, we observe that by the mean value theorem,
\begin{align*}
\sum_{0 \leq j \leq n} \rho_j\theta_j \sum_{p \in E_j} \frac{|1-e^{-i\tau \log p}|}{p^{\sg}} &\leq |\tau|\sum_{0 \leq j \leq n} \rho_j \theta_j \sum_{p \in E_j} \frac{\log p}{p^{\sg}} \\
&= \frac{|\tau|}{\sg-1} \sum_{0 \leq j \leq n} \rho_j \alpha_j\theta_j \leq \delta \sum_{0 \leq j \leq n} \rho_j \alpha_j \theta_j.
\end{align*}
Since $|z_j-\rho_j| \leq \rho_j\theta_j$ in our range of arguments $t_j$, we are left with
\begin{equation*}
h(\mbf{z};s) = -\sum_{0 \leq j \leq n} \rho_j \sum_{p \in E_j} \frac{1-i\tau \log p-e^{-i\tau \log p}}{p^{\sg}} + O\left(\sum_{0 \leq j \leq n} \rho_j\left(Q_j-\delta^3\gamma_j\right)\right).
\end{equation*}
Hence, expanding in powers of $\tau \log p$ the expression $1-i\tau \log p - e^{-i\tau \log p}$ in this last equation, via \eqref{POWERS}, the remainder term (or order 3) in Taylor's theorem becomes
\begin{equation*}
|\tau|^3\sum_{0 \leq j \leq n} \rho_j \sum_{p \in E_j} \frac{\log^3 p}{p^{\sg}} \leq \delta^3\sum_{0 \leq j \leq n} \rho_j \gamma_j.
\end{equation*}
It follows that
\begin{equation*}
h(\mbf{z};s) = -\frac{\tau^2}{2}\sum_{0 \leq j \leq n} \rho_j\sum_{p \in E_j} \frac{\log^2 p}{p^{\sg}} + O\left(\sum_{0 \leq j \leq n} \rho_j Q_j \right).
\end{equation*}
\end{proof}
In order for $\int_{-\theta_j}^{\theta_j} e^{-\frac{1}{2}kt_j^2}$ in \eqref{NEWPERR} to be computable with good error, it seems suitable to select $\theta_j$ such that $\theta_j^2k_j \ra \infty$ as $x \ra \infty$ (in the regime where $k_j$ is an increasing function of $x$), in order to replace the short interval integral with the full Gaussian integral $\int_{-\infty}^{\infty} e^{-\frac{1}{2}k_jt_j^2} dt_j$. On the other hand, we must also ensure that $\frac{\rho_j}{\log \rho_j}\theta_j^2$ is small, so that the error term in Lemma \ref{ERR} is small as well. Our assumption that $E_j(x) \ra \infty$ as $x \ra \infty$ for each $j$ will be necessary here in order for these two conditions to be possible. Moreover, we will also need $\delta = \frac{|\tau|}{\sg-1}$ to be small in order for $\delta\sum_{0 \leq j \leq n} \rho_j \alpha_j\theta_j$ and $\delta\sum_{0 \leq j \leq n} \rho_j\gamma_j$ to be small; it cannot be too small, however, as $\delta \sum_{0 \leq j \leq n} \rho_j\beta_j$ must be large, in order for $\int_{-T}^T e^{h(\mbf{z};s)} \frac{x^s}{s}ds$ to be extended to an infinite integral. This motivates the choices that we shall make in Lemma \ref{LemInt}. \\
Set $\mc{B} := [-\theta_0,\theta_0] \times \cdots \times [-\theta_n,\theta_n]$.
\begin{lem} \label{LemInt}
Let $\eta, \e \in \left(0,\frac{1}{9}\right)$. Assume one of $\mathbf{H}_1(\sg)$, $\mathbf{H}_2(\sg)$ or $\mathbf{H}_3(\sg)$. Suppose that $E_j\left(e^{\frac{1}{\sg-1}}\right)^{m_j} \ll k_j \ll \min\{k_j^{\max},\log^{\frac{2}{3}-\mu} x\}$ for each $0 \leq j \leq n$, where $\mu > 0$ is arbitrary but fixed. For each $j$ let $\theta_j := k_j^{-\frac{1}{2}}E_j\left(e^{\frac{1}{\sg-1}}\right)^{\eta}$. Denote by $j_0$ the index of the set $E_j$ such that $\rho_j^2\beta_j\theta_j^2$ is maximal for $j = j_0$. Let $\delta := E_{j_0}\left(e^{\frac{1}{\sg-1}}\right)^{\e}\left(\sum_{0 \leq j \leq n} \rho_j\beta_j\right)^{-\frac{1}{2}}$, and set $T := \delta(\sg-1)$.  
%
%
Finally, define
\begin{equation*}
R_j := \theta_jk_j^{\frac{1}{2}} e^{-k_j\theta_j^2/2} + k_j\theta_j^3 + \rho_j \left(\frac{\theta_j^2}{\log(\rho_j+1)} + T(T+\theta_j)\log(\rho_j+1) + \delta^3\gamma_j + \theta_j\alpha_j\delta\right).
\end{equation*}
Then 
\begin{align*}
I &:= \int_{-T}^T \frac{d\tau}{\sg + i\tau} \int_{\mc{B}} d\mbf{t} e^{-\frac{1}{2}\sum_{0 \leq j \leq n} k_jt_j^2} H(\rho e^{i\mbf{t}};\sg + i\tau) \\
&= \left(\frac{2\pi}{\sum_{0 \leq j \leq n} \rho_j\beta_j}\right)^{\frac{1}{2}}(\sg-1)\prod_{0 \leq j \leq n} \left(\frac{2\pi}{k_j}\right)^{\frac{1}{2}}\left(1+O(R_j)\right),
\end{align*}
where each $R_j$ satisfies the estimate
\begin{equation} \label{ERRS}
R_j \ll_{\eta,\e} E_j\left(e^{\frac{1}{\sg-1}}\right)^{-\frac{1}{2}+\e+\eta}.
\end{equation} 
\end{lem}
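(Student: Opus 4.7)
The strategy is to insert the Taylor expansion from Lemma~\ref{ERR} into $H=\exp(h+O(\sum_j k_jt_j^3))$, which reduces $I$ to a product of Gaussians that can be evaluated by extension to $\mb{R}$. Concretely, on our box
\[
H(\mbf{\rho}e^{i\mbf{t}};\sg+i\tau)=\exp\!\left(-\frac{\tau^2}{2(\sg-1)^2}\sum_{0\le j\le n}\rho_j\beta_j\right)\!\Bigl(1+O\Bigl(\sum_j\rho_jQ_j+\sum_jk_jt_j^3\Bigr)\Bigr),
\]
provided the error in the exponent is $o(1)$. The first step is to verify this; this is exactly where one of the hypotheses $\mathbf{H}_1(\sg),\mathbf{H}_2(\sg),\mathbf{H}_3(\sg)$ must be invoked, via Lemma~\ref{RATIOS}, to control the $\rho_j\delta^3\gamma_j$ piece of $\rho_jQ_j$ by a multiple of $\rho_j\beta_j$; the choice $\delta^2=E_{j_0}(z)^{2\e}/\sum_j\rho_j\beta_j$ (with $z=e^{1/(\sg-1)}$) then keeps this tame.

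Since the main term of $h$ is independent of $\mbf{t}$, the integrand separates, and
\[
I=(1+\mc{E})\Bigl(\int_{-T}^T\frac{e^{-c\tau^2}}{\sg+i\tau}\,d\tau\Bigr)\prod_{0\le j\le n}\int_{-\theta_j}^{\theta_j}e^{-k_jt_j^2/2}\,dt_j,\qquad c:=\frac{\sum_j\rho_j\beta_j}{2(\sg-1)^2},
\]
where $\mc{E}$ denotes the accumulated error. For each inner factor I complete the Gaussian to $\mb{R}$ using $\int_{\theta_j}^\infty e^{-k_ju^2/2}du\ll(k_j\theta_j)^{-1}e^{-k_j\theta_j^2/2}$, which is super-polynomially small in $E_j(z)$ because $k_j\theta_j^2=E_j(z)^{2\eta}\ra\infty$, and is absorbed into the $\theta_jk_j^{1/2}e^{-k_j\theta_j^2/2}$ term of $R_j$. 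Each factor thus equals $\sqrt{2\pi/k_j}$ up to that error.

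For the $\tau$-integral I split $\frac{1}{\sg+i\tau}=\frac{\sg-i\tau}{\sg^2+\tau^2}$: the odd-in-$\tau$ part vanishes by the symmetry of the Gaussian, and the expansion $\frac{\sg}{\sg^2+\tau^2}=\frac{1}{\sg}(1+O(\tau^2/\sg^2))$ followed by completion to $\mb{R}$ (with tail $\ll(cT)^{-1}e^{-cT^2}$ negligible because $cT^2=\tfrac12 E_{j_0}(z)^{2\e}\ra\infty$) yields $(\sg-1)\sqrt{2\pi/\sum_j\rho_j\beta_j}(1+o(1))$. Combining with the $t_j$-integrals and using $\sg=1+o(1)$ reproduces the main term in the stated asymptotic.

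It remains to verify that each summand of $\mc{E}$ is absorbed into $R_j\ll E_j(z)^{-1/2+\e+\eta}$. Substituting $\theta_j=k_j^{-1/2}E_j(z)^\eta$, $T=\delta(\sg-1)$, $\delta^2=E_{j_0}(z)^{2\e}/\sum_j\rho_j\beta_j$, together with the asymptotic $\rho_j\sim k_j/E_j(z)$ of Lemma~\ref{PARAMS} and the lower bound $k_j\gg E_j(z)^{m_j}$, the five pieces $\rho_j\theta_j^2/\log(\rho_j+1)$, $\rho_jT(T+\theta_j)\log(\rho_j+1)$, $\rho_j\delta^3\gamma_j$, $\rho_j\theta_j\alpha_j\delta$ and $k_j\theta_j^3$ each reduce to a bound of the claimed form. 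The principal obstacle is precisely this simultaneous balancing: $\theta_j$ must be just large enough that $k_j\theta_j^2\ra\infty$ yet small enough that $\rho_j\theta_j^2$ is small in $E_j(z)$, and $\delta$ must be large enough that $cT^2\ra\infty$ yet small enough that $\delta^3\sum_j\rho_j\gamma_j\ll 1$, which is feasible exactly when one of the three hypotheses holds.
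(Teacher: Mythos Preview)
Your proposal is correct and follows essentially the same route as the paper: apply Lemma~\ref{ERR} to factor the integrand into a Gaussian in $\tau$ times Gaussians in the $t_j$, extend each to $\mb{R}$ with controlled tails, and then verify term by term that the choices of $\theta_j$ and $\delta$ force every piece of $R_j$ to be $\ll E_j(e^{1/(\sg-1)})^{-1/2+\e+\eta}$. The only cosmetic differences are that the paper first checks $\delta<1$ explicitly (needed to invoke Lemma~\ref{ERR}) via a pigeonhole argument on $\beta_{j_1}\gg_n 1$, and handles $1/(\sg+i\tau)$ by the cruder bound $|1/(\sg+i\tau)-1|\ll\sg-1$ rather than your parity split; your slightly looser description of how $\mathbf{H}_1,\mathbf{H}_2,\mathbf{H}_3$ enter (they act on the maximising index $j''$ or on a single large $\rho_{j'}\beta_{j'}$, not on each $j$ separately) does not affect the argument.
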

\begin{proof}
We first remark that $\delta < 1$, as needed in Lemma \ref{ERR}. Indeed, since $\sum_{0 \leq j \leq n} \beta_j \sim 1$ as $x \ra \infty$, there must exist some $j_1$ such that $\beta_{j_1} \gg_n 1$. By choice of $j_0$, 
\begin{equation*}
\rho_{j_0}^2\beta_{j_0}\theta_{j_0}^2 = \rho_{j_0}\beta_{j_0}E_{j_0}\left(e^{\frac{1}{\sg-1}}\right)^{-(1-\eta)}
\end{equation*}
is maximal among $j$, and thus, as $\rho_{j_1} \gg E_{j_1}\left(e^{\frac{1}{\sg-1}}\right)$,
\begin{equation*}
\rho_{j_0}\beta_{j_0} E_{j_0}\left(e^{\frac{1}{\sg-1}}\right)^{-(1-\eta)} \gg_n \rho_{j_1}E_{j_1}\left(e^{\frac{1}{\sg-1}}\right)^{-(1-\eta)} \gg_n E_{j_1}\left(e^{\frac{1}{\sg-1}}\right)^{\eta}.
\end{equation*}
Thus, $\rho_{j_0}\beta_{j_0} \gg_n E_{j_0}\left(e^{\frac{1}{\sg-1}}\right)^{1-\eta}E_{j_1}\left(e^{\frac{1}{\sg-1}}\right)^{\eta}$, which implies that
\begin{equation} \label{DELTAEST}
\delta \asymp E_{j_0}\left(e^{\frac{1}{\sg-1}}\right)^{\e} \left(\sum_{0 \leq j \leq n} \rho_j\beta_j\right)^{-\frac{1}{2}} \leq \rho_{j_0}^{-\frac{1}{2}}\beta_{j_0}^{-\frac{1}{2}} E_{j_0}\left(e^{\frac{1}{\sg-1}}\right)^{\e} \ll E_{j_0}\left(e^{\frac{1}{\sg-1}}\right)^{\e-\frac{1}{2}(1-\eta)} E_{j_1}^{-\frac{1}{2}\eta}.
\end{equation}
As we are assuming that $\e + \eta < \frac{1}{2}$, we clearly see that $\delta = o(1)$, and the claim above trivially follows for sufficiently large $x$. \\
According to the above remark we may apply Lemma \ref{ERR}, which gives
\begin{align*}
&e^{-\frac{1}{2}\sum_{0 \leq j \leq n} k_jt_j^2} H(\rho e^{i\mbf{t}};\sg + i\tau) = \text{exp}\left(-\frac{1}{2}\tau^2\sum_{0 \leq j \leq n} \rho_j\sum_{p \in E_j} \frac{\log^2 p}{p^{\sg}} -\sum_{0 \leq j \leq n} \frac{1}{2}k_jt_j^2 + O\left(R_j'\right)\right) \\
&= \text{exp}\left(-\frac{1}{2}\tau^2\sum_{0 \leq j \leq n} \rho_j\sum_{p \in E_j} \frac{\log^2 p}{p^{\sg}} -\sum_{0 \leq j \leq n} \frac{1}{2}k_jt_j^2\right)\prod_{0 \leq j \leq n} \left(1+ O\left(R_j'\right)\right),
\end{align*}
where $R_j'$ is defined such that $R_j =: R_j' + \theta_jk_j^{\frac{1}{2}}e^{-k_j\theta_j^2/2}$. \\
In estimating $I$ we proceed first with the integrals over $t_j$ for each $j$. Letting $\e > 0$ be a parameter to be specified, we have
\begin{align}
\int_{-\theta_j}^{\theta_j} e^{-\frac{1}{2}k_jt_j^2} dt_j &= \left(\frac{2}{k_j}\right)^{\frac{1}{2}}\left(\int_{-\infty}^{\infty} e^{-u^2} du + O\left(\int_{|u| > (k_j/2)^{\frac{1}{2}}\theta_j} e^{-u^2}du \right)\right) \nonumber\\
&= \sqrt{\frac{2\pi}{k_j}}\left(1 + O\left(e^{-\frac{1}{2}(1-\e)k_j\theta_j^2}\int_{-\infty}^{\infty} e^{-\e u^2} du\right)\right)  \\
&= \sqrt{\frac{2\pi}{k_j}}\left(1 + O\left(e^{-\frac{1}{2}(1-\e)k_j\theta_j^2}\e^{-\frac{1}{2}}\right)\right) \label{OPT}.
\end{align}
The optimal value for $\e$ is $\frac{1}{k_j\theta_j^2}$, whence we get an error term $O\left(\theta_jk_j^{\frac{1}{2}} e^{-\frac{1}{2}k_j\theta_j^2}\right)$, for each $j$.  Since $e^{-\frac{1}{2}k_j\theta_j^2} \ll e^{-\frac{1}{2}E_j\left(e^{\frac{1}{\sg-1}}\right)^{\eta}}$, it follows that $\theta_jk_j^{\frac{1}{2}} e^{-\frac{1}{2}k_j\theta_j^2}  \ll E_j\left(e^{\frac{1}{\sg-1}}\right)^{-\frac{1}{2}}$ for $x$ sufficiently large (in terms of $\eta$). \\
Consider now the integral in $\tau$. First, note that $\left|\frac{1}{\sg + i\tau} - 1\right| \leq \frac{|\sg - 1| + T}{1-T} \ll \frac{1}{\sg-1}$ (the last inequality following because $\delta < 1$). Set 
\begin{equation*}
D:= \sum_{0 \leq j \leq n} \rho_j \sum_{p \in E_j} \frac{\log^2 p}{p^{\sg}} = \frac{1}{(\sg-1)^2}\sum_{0 \leq j \leq n} \rho_j\beta_j.
\end{equation*}
The integral in $\tau$ can thus be simplified as
\begin{align}
\int_{-T}^T e^{-\frac{1}{2}D\tau^2} \frac{d\tau}{\sg + i\tau} &= \left(1+O\left(\sg-1\right)\right) \left(\frac{2}{D}\right)^{\frac{1}{2}}\int_{-T\left(\frac{D}{2}\right)^{\frac{1}{2}}}^{T\left(\frac{D}{2}\right)^{\frac{1}{2}}} e^{-u^2}{du}. \label{FIRSTSMALL}\\
\end{align}
By our choice of $\delta$,
\begin{align*}
T^2D &= \left(\frac{T}{\sg-1}\right)^2\sum_{0 \leq j \leq n} \rho_j\beta_j = \left(\delta\sum_{0 \leq j \leq n} \rho_j\beta_j\right)^2\gg E_{j_0}\left(e^{\frac{1}{\sg-1}}\right)^{2\e}
\end{align*}
for each $j$. As such, we can replace the integral over the segment $[-TD^{\frac{1}{2}},TD^{\frac{1}{2}}]$ with an integral over all of $\mb{R}$ as in \eqref{OPT}. Thus,
\begin{equation}
\int_{-T\left(\frac{D}{2}\right)^{\frac{1}{2}}}^{T\left(\frac{D}{2}\right)^{\frac{1}{2}}} e^{-u^2} du = \sqrt{2\pi} + O\left(TD^{\frac{1}{2}}e^{-\frac{1}{2}T^2D}\right) = \sqrt{2\pi} + O_{\e}\left(E_{j_0}\left(e^{\frac{1}{\sg-1}}\right)^{-\frac{1}{2}}\right) \label{SECONDSMALL},
\end{equation}
in analogy to the derivation of \eqref{OPT}, for $x$ sufficiently large in terms of $\e$ as well. As $1+O\left(E_{j_0}\left(e^{\frac{1}{\sg-1}}\right)^{-\frac{1}{2}}\right) \leq \prod_{0 \leq j \leq n} \left(1+O\left(E_j\left(e^{\frac{1}{\sg-1}}\right)^{-\frac{1}{2}}\right)\right)$, it follows that 
\begin{align*}
I = \left(1+ O\left((\sg - 1) \right)\right)(\sg-1) \left(\frac{2\pi}{\sum_{0 \leq j \leq n} \rho_j\beta_j}\right)^{\frac{1}{2}}\prod_{0 \leq j \leq n} \left(\frac{2\pi}{k_j}\right)^{\frac{1}{2}}\left(1+O(R_j)\right).
\end{align*}
It remains to prove \eqref{ERRS}. 
To start, we see by definition of $\theta_j$ that
\begin{align*}
k_j\theta_j^3 &\asymp k_j^{-\frac{1}{2}}E_j\left(e^{\frac{1}{\sg-1}}\right)^{3\eta} \ll E_j\left(x\right)^{-1+3\eta}; \\
\frac{\rho_j}{\log(\rho_j+1)}\theta_j^2 &\ll \rho_jk_j^{-1}E_j\left(e^{\frac{1}{\sg-1}}\right)^{2\eta} \ll E_j\left(e^{\frac{1}{\sg-1}}\right)^{-1+2\eta},
\end{align*}
for each $j$. Next, let $j''$ be the index that maximizes $r_j := \rho_j \gamma_j$ among all $j$ (as in $\mathbf{H}_2(\sg)$). Note that 
\begin{equation*}
\left(\sum_{0 \leq j \leq n} \rho_j \beta_j\right)^{\frac{3}{2}} \geq \left(\rho_{j''}^2\beta_{j''}^2\sum_{0 \leq j \leq n} \rho_j\beta_j\right)^{\frac{1}{2}} = \rho_{j''}\beta_{j''}\left(\sum_{0 \leq j \leq n} \rho_j\beta_j\right)^{\frac{1}{2}},
\end{equation*}
so we immediately find, by our choice of $j''$, that 
\begin{align}
\delta^3\sum_{0 \leq j \leq n} \rho_j\gamma_j &\ll_n E_{j_0}\left(e^{\frac{1}{\sg-1}}\right)^{3\e}\left(\rho_{j''}\beta_{j''} \left(\sum_{0 \leq j \leq n} \rho_j\beta_j\right)^{\frac{1}{2}}\right)^{-1}\rho_{j''}\beta_{j''}\left(\frac{\gamma_{j''}}{\beta_{j''}}\right) \nonumber\\
&\ll_n E_{j_0}\left(e^{\frac{1}{\sg-1}}\right)^{3\e}\left(\sum_{0 \leq j \leq n} \rho_j\beta_j\right)^{-\frac{1}{2}}\left(\frac{\gamma_{j''}}{\beta_{j''}}\right) \label{IMPROVABLE}.
\end{align}
Assume $\mathbf{H}_1(\sg)$ first.  Then we can furthermore assume that $j_1$, chosen above, satisfies $\beta_{j'} \gg_n 1$ as well, because $\alpha_{j'} \gg_n 1$, and $\beta_{j'} \gg \alpha_{j'}^2$ by Lemma \ref{RATIOS}. Hence, $\rho_{j'}\beta_{j'} \gg_n \log^3\left(\frac{1}{\sg-1}\right)$ when $m_{j'} = 4$. Also, by Lemma \ref{RATIOS}, $\gamma_j \ll \beta_j\log\left(\frac{1}{\beta_j}\right) \ll \beta_j\log\left(\frac{1}{\sg-1}\right)$ for each $j$, since $\beta_j \gg (\sg-1)^2$. It follows that
\begin{align*}
E_{j_0}\left(e^{\frac{1}{\sg-1}}\right)^{3\e}\left(\sum_{0 \leq j \leq n} \rho_j\beta_j\right)^{-\frac{1}{2}}\left(\frac{\gamma_{j''}}{\beta_{j''}}\right) &\ll E_{j_0}\left(e^{\frac{1}{\sg-1}}\right)^{3\e}\frac{\log\left(\frac{1}{\sg-1}\right)}{(\rho_{j'}\beta_{j'})^{\frac{1}{2}}} \ll \log^{-\frac{1}{2}+3\e}\left(\frac{1}{\sg-1}\right),
\end{align*}
which suffices. \\
Next, assume $\mathbf{H}_2(\sg)$. Then $\gamma_{j''}/\beta_{j''} \ll_n 1$, and by \eqref{DELTAEST}, $\delta \ll E_{j_0}\left(e^{\frac{1}{\sg-1}}\right)^{-\frac{1}{2}+\frac{1}{2}\eta+\e}$. Thus, $\delta^3\sum_{0 \leq j \leq n} \rho_j\gamma_j \ll E_{j_0}\left(e^{\frac{1}{\sg-1}}\right)^{-\frac{1}{2}(1-\eta)+ 4\e}$, which also suffices.  \\
Finally, by Lemma \ref{RATIOS}, $\mathbf{H}_3(\sg)$ implies $\mathbf{H}_2(\sg)$, so the same conclusion as the one just proved still follows. \\
Recall again that $\alpha_j^2 \ll \beta_j$ for each $j$ by Lemma \ref{RATIOS}. Thus, by our choice of $j_0$,
\begin{align*}
\delta^2\left(\sum_{0 \leq j \leq n} \rho_j \alpha_j \theta_j\right)^2 &\ll_n E_{j_0}\left(e^{\frac{1}{\sg-1}}\right)^{2\e}\left(\sum_{0 \leq j \leq n} \rho_j \beta_j\right)^{-1}\left(\sum_{0 \leq j \leq n} \rho_j^2\alpha_j^2\theta_j^2\right) \\
&\ll E_{j_0}\left(e^{\frac{1}{\sg-1}}\right)^{2\e}\left(\sum_{0 \leq j \leq n} \rho_j \beta_j\right)^{-1}\left(\sum_{0 \leq j \leq n} \rho_j^2\beta_j\theta_j^2\right) \\
&\ll_n  E_{j_0}\left(e^{\frac{1}{\sg-1}}\right)^{2\e} \frac{\rho_{j_0}^2\beta_{j_0}\theta_{j_0}^2}{\rho_{j_0}\beta_{j_0}} =E_{j_0}\left(e^{\frac{1}{\sg-1}}\right)^{2\e} \rho_{j_0}\theta_{j_0}^2 \\
&\ll k_{j_0}\theta_{j_0}^2 E_{j_0}\left(e^{\frac{1}{\sg-1}}\right)^{-1+2\e} \ll E_{j_0}\left(e^{\frac{1}{\sg-1}}\right)^{-1+2(\e+\eta)}.
\end{align*}
Since $\eta+\e < \frac{1}{4}$, this last bound gives $\delta\sum_{0 \leq j \leq n} \rho_j\alpha_j\theta_j \ll E_{j_0}\left(e^{\frac{1}{\sg-1}}\right)^{-\frac{1}{2} +(\eta + \e)}$.
Lastly, we note that $T \ll \theta_j^2E_j(x)^{-1}$ for each $j$, and $k_j \ll \log^{\frac{2}{3}-\mu} x$.  Indeed, by Lemma \ref{PARAMS}, we see that
\begin{equation*}
\sg-1\leq \frac{1}{\log x}\sum_{0 \leq j \leq n} \rho_j \ll \frac{1}{\log x}\sum_{0 \leq j \leq n} k_j \ll \log^{-\frac{1}{3}-\mu} x,
\end{equation*}
so that $(\sg-1)k_j^{\frac{1}{2}} \leq \log^{-\mu} x$ uniformly in our range of $k_j$.  It follows that $(\sg-1)\theta_j^{-1} \ll \log^{-\mu/2} x$, whence also that $T < (\sg-1) \ll \log^{-\mu/2} x\theta_j$. 
As $\log(\rho_j+1) \ll \log\left(\frac{1}{\sg-1}\right)$,
\begin{equation*}
\rho_jT(T+\theta)\log(\rho_j+1) \ll \rho_j\theta_j^2\log^{-\mu/2} x \log_2 x \ll_{\mu} E_j\left(e^{-\frac{1}{\sg-1}}\right)^{-1}. 
\end{equation*}
Since, now, $(\sg-1) \ll E_j\left(e^{\frac{1}{\sg-1}}\right)$ for each $j$, we can absorb the factor $1+O\left(\sg-1\right)$ into one of the factors $1+O\left(R_j\right)$. Replacing $\e$ by $\frac{1}{4}\e$ completes the proof.
\end{proof}
We now consider the truncation error associated with restricting our initial Perron integral \eqref{PerrId} to the $(n+2)$-dimensional box $[-T,T] \times \prod_{0 \leq j \leq n} [-\theta_j,\theta_j]$, using the choices of parameters in Lemma \ref{LemInt}. The first part of the proof is inspired directly from \cite{HiT}; we give a detailed proof of the modification of it that is necessary to us.
\begin{lem} \label{LemAppTrunc}
Let $x \geq 3$, $A \geq 2$, set $L := \log^A x$, and let $T$, $\delta$, and $\theta_0,\ldots,\theta_n$ be fixed as in Lemma \ref{LemInt}.  Assume that $k_j \geq E_j(x)^2$ for each $0 \leq j \leq n$. Then, uniformly in $x$,
\begin{equation*}
\pi(x;\mbf{E};\mbf{k}) = x^{\sg}F(\mbf{\rho};\sg)\left(\left(\prod_{0 \leq j \leq n} \rho_j^{-k_j}\left(1+O(S_j)\right)\right)\frac{I}{(2\pi)^{n+2}} + O\left(\frac{1}{\log^B x}\right)\right)
\end{equation*}
where, as before, $\mbf{\rho}e^{i\mbf{t}} := (\rho_0e^{it_0},\ldots,\rho_ne^{it_n})$, $B := A-\frac{3}{7}$, and $S_j := E_j\left(e^{\frac{1}{\sg-1}}\right)^{-\frac{1}{2}}$.
\end{lem}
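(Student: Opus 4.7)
The plan is to split the integration region in \eqref{PerrId} into the central box $B := [-T,T] \times \prod_{0 \leq j \leq n} [-\theta_j, \theta_j]$ and its complement in $\R \times [-\pi,\pi]^{n+1}$. On $B$, substituting the Taylor expansion \eqref{hDef2}--\eqref{hDef} and evaluating via Lemma \ref{LemInt} immediately produces the main term $\frac{x^{\sg} F(\mbf{\rho};\sg)}{\prod_j \rho_j^{k_j}}\cdot \frac{I}{(2\pi)^{n+2}}$, equipped with multiplicative errors $1+O(R_j)$; since $R_j\ll_{\e,\eta}E_j(e^{1/(\sg-1)})^{-1/2+\e+\eta}$, upon choosing $\e$ and $\eta$ sufficiently small these are absorbed into the $1+O(S_j)$ factors of the statement.

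It remains to bound the tail contribution. I would partition the complement of $B$ in $[-\pi,\pi]^{n+1}\times \R$ into three natural pieces: (I) $|\tau|\leq T$ but some $|t_j|>\theta_j$; (II) $T<|\tau|\leq 2/(\sg-1)$; and (III) $|\tau|>2/(\sg-1)$. In each, the estimate $|F(\mbf{\rho}e^{i\mbf{t}};\sg+i\tau)|\leq F(\mbf{\rho};\sg)\,e^{-(G(\mbf{t},\tau)+R(\mbf{t},\tau))/8}$ from Lemma \ref{LemSmallTrunc} applies. On (I), the $\tfrac{1}{250}k_j t_j^2$ portion of $G$ gives a Gaussian tail, and $\int_{|t_j|>\theta_j}e^{-ck_j t_j^2}\,dt_j\ll k_j^{-1/2}\exp(-cE_j(e^{1/(\sg-1)})^{2\eta})$ is much smaller than $S_j$. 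On (II), the $(\sg-1)^{-2}(\sum \rho_j\beta_j)\tau^2$ portion of $G$ gives the analogous Gaussian saving in $\tau$, since by our choice of $T$ we have $T^2\sum\rho_j\beta_j/(\sg-1)^2\gg E_{j_0}(e^{1/(\sg-1)})^{2\e}$.

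The main obstacle is piece (III), where the quadratic-in-$\tau$ mechanism saturates. Here Lemma \ref{LemSmallTrunc} instead provides polynomial decay via $R(\mbf{t},\tau)\gg \sum_j k_j\nu_j^2\min\{1,E_j(e^{1/(\sg-1)})/\log(|\tau|/(\sg-1))\}^2$, which exponentiates to a bound of the form $|F/F(\mbf{\rho};\sg)|\ll (|\tau|/(\sg-1))^{-\gamma}$ with $\gamma$ proportional to a weighted sum of the $k_j$; under the standing hypothesis $k_j\geq E_j(x)^2$, one checks that $\gamma \gg A$ for $x$ sufficiently large. I would then apply Lemma \ref{GenIntEst} to integrate the $\tau$-tail, use $|\sg+i\tau|\gg|\tau|\gg 1/(\sg-1)$ to dispose of the Perron kernel, and integrate trivially in $\mbf{t}\in[-\pi,\pi]^{n+1}$ (the portion of (III) with some $|t_j|>\theta_j$ then receives the additional Gaussian decay from case (I) and is even smaller). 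Comparing the resulting tail against the main-term size $I\asymp(\sg-1)(\sum\rho_j\beta_j)^{-1/2}\prod_j k_j^{-1/2}$ produces the stated error $x^{\sg}F(\mbf{\rho};\sg)/\log^B x$; the numerical deficit $B = A-3/7$ arises from tracking the $\log$-losses $\log(|\tau|/(\sg-1))$ inside $R$, the factor $\sqrt{\pi^2/2\gamma}$ from Lemma \ref{GenIntEst}, and the factor $\log(\rho_j+1)$ inherited from the Taylor remainder.
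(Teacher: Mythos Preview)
Your decomposition and the use of Lemma \ref{LemSmallTrunc} on the pieces with $|\tau|$ bounded are broadly right, and match what the paper does for $J_1$ and for the sub-ranges $\iota_1,\iota_2,\iota_3$ of $J_2$. But there is a genuine gap in your treatment of the far tail in $\tau$, and your account of the constant $B=A-3/7$ is incorrect.

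The gap is this: Lemma \ref{LemSmallTrunc} does \emph{not} produce a bound $|F|/F(\mbf{\rho};\sg)\ll(|\tau|/(\sg-1))^{-\gamma}$ for large $|\tau|$. In the range $|\tau|\geq 1$ the term $G(\mbf{t},\tau)$ in \eqref{CASES} saturates at the constant value $c\log(1+1/(\sg-1)^2)$, while $R(\mbf{t},\tau)\asymp\sum_j k_j\nu_j^2\bigl(E_j(e^{1/(\sg-1)})/\log(|\tau|/(\sg-1))\bigr)^2$ tends to $0$ as $|\tau|\to\infty$; so $e^{-(G+R)/8}$ is eventually only $O((\sg-1)^{c})$, and $\int_1^\infty |F(\mbf{\rho};\sg+i\tau)|\,d\tau/|\sg+i\tau|$ is not absolutely convergent. (Lemma \ref{EXTRADECAY}, which feeds into $R$, is in any case stated only for $|\tau|/(\sg-1)\ll e^{E_j(e^{1/(\sg-1)})}$.) Hence piece (III) as you describe it cannot be bounded by integrating $|F|$ directly; the Perron integral on the line $\text{Re}(s)=\sg$ is only conditionally convergent and must first be truncated by the effective Perron formula.

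This is exactly what the paper does, and it is where $B=A-3/7$ actually comes from. The truncation to $|\tau|\leq L=\log^A x$ via Theorem II.2.4 of \cite{Ten2} produces the error $\sum_{m\geq 1}a(m)m^{-\sg}\min\{1,(L|\log(x/m)|)^{-1}\}$; the paper controls this by inserting a tent function $\chi$, applying Fourier inversion, and bounding the resulting integral $\int_{-\infty}^{\infty}(1+u^2)^{-1}|F(\mbf{\rho};\sg+iu/2\kappa)|\,du$ using Lemma \ref{LemSmallTrunc} and Lemma \ref{GenIntEst}. Optimising over the auxiliary parameter $\kappa=L^{-r}$ forces $r=3/(7A)$, and that is the sole origin of the loss $3/7$. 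The factors you cite (the $\log$-loss inside $R$, $\sqrt{\pi^2/2\gamma}$ from Lemma \ref{GenIntEst}, and $\log(\rho_j+1)$ from the Taylor remainder) play no role in producing $B$; they are absorbed elsewhere. Once the $\tau$-range is cut to $[-L,L]$, the rest of your outline (pieces (I), (II), and the portion of (III) with $|\tau|\leq L$) goes through essentially as you describe.
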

\begin{proof}
Restricting to $|z_j| = \rho_j$ for each $0\leq j \leq n$, the Dirichlet series $\sum_{m \geq 1} \frac{z_0^{\omega_{E_0(m)}}\cdots z_n^{\omega_{E_n(m)}}}{m^s}$ has coefficients bounded above by $a(m) := \prod_{0 \leq j \leq n} \rho_j^{\omega_{E_j}(m)}$ for each $m \geq 1$ (note that $\sum_{m \geq 1} \frac{a(m)}{m^{\sg}} = F(\mbf{\rho};\sg)$). Thus, by Theorem II.2.4 in \cite{Ten2}, we have
\begin{equation*}
\frac{1}{2\pi}\int_{-\infty}^{\infty} F(\mbf{z};\sg + i\tau) x^{\sg + i\tau} \frac{d\tau}{\sg + i\tau} = \frac{1}{2\pi}\int_{-L}^LF(\mbf{z};\sg + i\tau) x^{\sg + i\tau} \frac{d\tau}{\sg + i\tau} + O\left(\sum_{m \geq 1} \frac{a(m)}{m^{\sg}}g_{x,L}(m)\right),
\end{equation*}
where $g_{x,L}(m) := \min\left\{1,\frac{1}{L|\log(x/m)|}\right\}$. Let $\kappa > 0$ be a parameter to be fixed momentarily, and let $\chi(t) := \max(1-|t|,0)$. Then whenever $|\log(x/m)|\leq \kappa$ we have $\chi\left(\frac{\log(x/m)}{2\kappa}\right) \geq \frac{1}{2}$. Hence,
\begin{align}
&\sum_{m \geq 1} \frac{a(m)}{m^{\sg}}g_{x,L}(m) \leq \sum_{m : |\log(x/m)| \leq \kappa} \frac{a(m)}{m^{\sg}} + \frac{1}{L\kappa}\sum_{m \geq 1} \frac{a(m)}{m^{\sg}} \nonumber\\
&\leq 2\left(\sum_{m \geq 1} \chi\left(\frac{\log(x/m)}{2\kappa}\right) \frac{a(m)}{m^{\sg}}+\frac{1}{L\kappa}F(\mbf{\rho},\sg)\right) \\
&= 2\left(\sum_{m \geq 1} \frac{a(m)}{m^{\sg}}\int_{-\infty}^{\infty}\hat{\chi}(u)e^{i\frac{\log(x/m)}{2\kappa}u} du + \frac{1}{L\kappa}F(\mbf{\rho},\sg)\right) \label{FUBINI}.
\end{align}
the last line following by the Fourier inversion theorem, since $\chi \in L^2(\mb{R})$.  Since
\begin{align*}
\hat{\chi}(u) &= \int_0^1 (1-t)\left(e^{iut} + e^{-iut}\right) = 2\int_0^1 (1-t)\cos(ut)dt = \frac{2}{u}\int_0^1 \sin(ut) dt \\
&= \frac{4}{u^2}\frac{1-\cos(u)}{2} = \left(\frac{\sin(u/2)}{u/2}\right)^2,
\end{align*}
and by Taylor's theorem, $\left(\frac{\sin(u/2)}{u/2}\right)^2 = \left(1-\frac{u^2}{24} + O\left(u^4\right)\right)^2 \ll \frac{1}{1+u^2}$ uniformly in $|u| < 1$; the bound $\hat{\chi}(u) \ll \frac{1}{1+u^2}$ is trivially satisfied for $|u| \geq 1$. Thus, $\hat{\chi} \in L^1(\mb{R})$, in fact, and we may apply Fubini's theorem in \eqref{FUBINI}, whence
\begin{align*}
&\sum_{m \geq 1} \frac{a(m)}{m^{\sg}}\min\left\{1,\frac{1}{L|\log(x/m)|}\right\} \leq 2\left(\int_{-\infty}^{\infty} x^{i\frac{u}{2\kappa}}\sum_{m \geq 1} \frac{a(m)}{m^{\sg + \frac{iu}{2\kappa}}} \hat{\chi}(u) + \frac{1}{L\kappa}F(\mbf{\rho},\sg)\right) \\
&\ll \int_{-\infty}^{\infty}\frac{du}{1+u^2} |F(\mbf{\rho};\sg + iu/2\kappa)| + \frac{F(\mbf{\rho};\sg)}{L\kappa} \asymp \int_0^{\infty} \frac{du}{1+u^2}|F\left(\mbf{\rho};\sg + iu/2\kappa\right)| + \frac{F(\mbf{\rho};\sg)}{L\kappa}.
\end{align*}
In preparation to apply Lemma \ref{LemSmallTrunc}, we break the integral into the regions $[0,T]$, $[T,L]$ and $[L,\infty]$, and make the change of variables $u \mapsto \frac{u}{2\kappa}$ in the first two. Thus,
\begin{equation*}
\int_{-\infty}^{\infty} \frac{du}{1+u^2} |F(\mbf{\rho};\sg + iu/2\kappa)| \ll 2\kappa(I_1 + I_2) + I_3,
\end{equation*}
where $I_1 := \int_0^{2T\kappa} |F(\mbf{\rho};\sg+iu')| du' \ll T\e|F(\mbf{\rho};\sg)|$, and from the second condition in \eqref{CASES},
\begin{align*}
I_2 &:= \int_{2T\kappa}^{2L\kappa} |F(\mbf{\rho};\sg+iu)| du \ll F(\mbf{\rho};\sg) \int_{2T\kappa}^{2L\kappa} \left(1+\left(\frac{\tau}{\sg-1}\right)^2\right)^{-\left(\sum_{0 \leq j \leq n} \frac{1}{\rho_j}\right)^{-1}} du \\
&\leq (\sg-1)F(\mbf{\rho};\sg) \int_{2\frac{T\kappa}{\sg-1}}^{\infty}(1+v^2)^{-\left(\sum_{0 \leq j \leq n} \frac{1}{\rho_j}\right)^{-1}} dv \\
&\ll (\sg-1)F(\mbf{\rho};\sg)\left(\sum_{0 \leq j \leq n} \frac{1}{\rho_j}\right)e^{-2\delta\kappa\left(\sum_{0 \leq j \leq n} \frac{1}{\rho_j}\right)^{-1}} \ll (\sg-1)\left(\sum_{0 \leq j \leq n} \frac{1}{\rho_j}\right)F\left(\mbf{\rho};\sg\right); \\
I_3 &:= \int_{L}^{\infty} |F(\mbf{\rho};\sg+iu/2\kappa)| \frac{du}{1+u^2} \ll F(\mbf{\rho};\sg)\int_L^{\infty} (1+u^2)^{-1}du\ll F(\mbf{\rho};\sg)L^{-1}.
\end{align*}
It follows that
\begin{equation*}
\sum_{m \geq 1} \frac{a(m)}{m^{\sg}}\min\left\{1,\frac{1}{L|\log(x/m)|}\right\} \ll F(\mbf{\rho};\sg)\left(\frac{1}{L\kappa} + \kappa\left(T\kappa + (\sg-1)\left(\sum_{0 \leq j \leq n} \frac{1}{\rho_j}\right)\right) + \frac{1}{L}\right).
\end{equation*}
Let $\eta$ be defined implicitly via $L^{-\eta} = \kappa$, with $0 < r < 1$.  As $T < \sg-1$ and $\kappa \ll \sum_{0 \leq j \leq n} \frac{1}{\rho_j}$, the third term and first are weakest. To make them equal, we choose $r = \frac{3}{7A}$, so that 
\begin{equation*}
(\sg-1)L^{-r} \ll L^{-r + \frac{1}{7A}} = L^{r - 1} = \log^{-B} x, 
\end{equation*}
where $B = A-\frac{3}{7}$. This last expression is thus bounded above by $O\left(\frac{F(\rho;\sg)}{\log^B x}\right)$. \\
Next, we investigate the difference between the truncated multiple integral above and the integral around the critical point $(\mbf{\rho};\sg)$. This difference in integrals is
\begin{align*}
&\left|\frac{1}{(2\pi)^{n+1}}\left(\int_{[-\pi,\pi]^{n+1}}d\mbf{t} \int_{|\tau| \leq L} F(\mbf{z};\sg + i\tau) x^{i\tau} \frac{d\tau}{\sg + i\tau} - \int_{\mc{B}} d\mbf{t}\int_{|\tau| \leq T} F(\mbf{z};\sg + i\tau) x^{i\tau} \frac{d\tau}{\sg + i\tau}\right)\right|\\
&\leq \frac{2}{(2\pi)^{n+1}}(J_1 + J_2),
\end{align*}
where we have defined 
\begin{align*}
J_1 &:=\int_{|\tau| \leq T} \frac{d\tau}{\sg + i\tau}\int_{\exists \ j : \theta_j < |t_j| \leq \pi}dt_0 \cdots dt_n  |F(\mbf{z};\sg + i\tau)|, \\
J_2 &:=  \int_{\forall \ j : |t_j| \leq \theta_j} dt_0 \cdots dt_n \int_{T<|\tau| \leq L} |F(\mbf{z};\sg + i\tau)| d\tau.
\end{align*}
Here, the factor of 2 in the second line follows because when both $|\tau| \geq T$ and $|t_j| > \theta_j$ for some $j$ then we get a smaller integral than either of $J_1$ and $J_2$ (as the decay in $|\tau|$ and $|t_j|$ is compounded in this case). We first evaluate $J_1$.  Fix $\tau$ for the moment with $|\tau| \leq T$. From the remarks following \eqref{OPT}, we get
\begin{align*}
\int_{|t_{j_0}| > \theta_{j_0}} dt_{j_0} e^{-\frac{1}{8}G(\mbf{t},\tau)} &\leq \int_{|t_{j_0}| > \theta_{j_0}} e^{-\frac{1}{2000}k_{j_0}t_{j_0}^2} dt_{j_0} \ll k_{j_0}^{-\frac{1}{2}}\int_{|u| > \frac{1}{2000}\theta_{j_0}k_{j_0}^{\frac{1}{2}}} e^{-u^2} du \\
&\ll \frac{1}{k_{j_0}^{\frac{1}{2}}} \left(k_{j_0}^{\frac{1}{2}}\theta_{j_0}e^{-\frac{1}{2000}k_{j_0}\theta_{j_0}^2}\right)
\end{align*}
(see \eqref{OPT}, where it is shown that this bound is optimal for the tail integral above); when $j$ is not such that $|t_j| > \theta_j$, we have
\begin{align*}
\int_{-\pi}^{\pi} dt_j e^{-\frac{1}{8}G(\mbf{t},\tau)} &\leq \int_{-\infty}^{\infty} du_j e^{-\frac{1}{2000}k_ju_j^2}  \ll k_j^{-\frac{1}{2}}\int_{-\infty}^{\infty} e^{-u^2} du \ll \left(\frac{2\pi}{k_j}\right)^{\frac{1}{2}}.
\end{align*}
It follows from Lemma \ref{LemInt} that
\begin{equation*}
\int_{[-\pi,\pi]^{n+1}} d\mbf{t} e^{-\frac{1}{8}G(\mbf{t},\tau)} \ll \left(\prod_{0 \leq j \leq n} k_j^{-\frac{1}{2}}\right)\left(\sum_{0 \leq j \leq n} k_j^{\frac{1}{2}}\theta_je^{-k_j\theta_j^2} \right),
\end{equation*}
and, as such, since $\int_{|\tau| \leq T} |F(\mbf{\rho};\sg+i\tau)| \leq TF(\mbf{\rho};\sg)$,
\begin{equation*}
J_1 \ll \int_{|\tau| \leq T} d\tau \int_{[-\pi,\pi]^{n+1}}d\mbf{t} |F(\mbf{z},\sg + i\tau)| \ll \delta F(\mbf{\rho},\sg)(\sg-1)\left(\prod_{0 \leq j \leq n} \frac{2\pi}{k_j}\right)^{\frac{1}{2}} \left(\sum_{0 \leq l \leq n} k_l^{\frac{1}{2}}\theta_le^{-k_l\theta_l^2} \right).
\end{equation*}
Now, consider $J_2$. We split this integral as 
\begin{align*}
J_2 &= \left(\int_{T < |\tau| \leq (\sg-1)} + \int_{(\sg-1) < |\tau| \leq 1} + \int_{1 < |\tau| \leq L}\right) d\tau \int_{|t_j| \leq \theta_j \ \forall j} d\mbf{t} |F(\mbf{z},\sg + i\tau)| =: \iota_1 + \iota_2 + \iota_3.	
\end{align*}
We first evaluate $\iota_1$. By Lemma \ref{LemSmallTrunc} in the range $\delta(\sg-1) < |\tau| \leq (\sg-1)$, we have
\begin{align*}
\iota_1 &\ll F\left(\mbf{\rho};\sg\right) \int_{\delta(\sg-1)}^{(\sg-1)} d\tau e^{-\frac{1}{250}\left(\sum_{0 \leq j \leq n} \rho_j \beta_j\left(e^{\frac{1}{\sg-1}}\right)\right)\frac{\tau^2}{(\sg-1)^2}} \int_{[-\pi,\pi]^{n+1}} d\mbf{t}e^{-\frac{1}{250}\sum_{0 \leq j \leq n} k_jt_j^2} \\
&\ll F\left(\mbf{\rho};\sg\right)\left(\prod_{0 \leq j \leq n}  \frac{2\pi}{k_j}\right)^{\frac{1}{2}}(\sg-1)\int_{\delta}^1 e^{-\frac{1}{250}\left(\sum_{0 \leq j \leq n} \rho_j\beta_j\left(e^{\frac{1}{\sg-1}}\right)\right)\tau^2} \\
\end{align*}
Proceeding as in Lemma \ref{LemInt}, set $D := \frac{1}{(\sg-1)^2}\sum_{0 \leq j \leq n} \rho_j\beta_j\left(e^{\frac{1}{\sg-1}}\right)$. Then, as $\delta = \frac{T}{\sg-1}$, we see that 
\begin{equation*}
\iota_1 \ll D^{-\frac{1}{2}}F\left(\mbf{\rho};\sg\right)\left(\prod_{0 \leq j \leq n}  \frac{2\pi}{k_j}\right)^{\frac{1}{2}}(\sg-1)\int_{\frac{1}{250}T D^{\frac{1}{2}}}^{\frac{1}{250}D^{\frac{1}{2}}} e^{-\tau^2}.
\end{equation*}
This last integral is bounded above, as in \eqref{FIRSTSMALL}, by $TD^{\frac{1}{2}}e^{-\frac{1}{2}T^2D}$, which implies that 
\begin{equation*}
\iota_1 \ll \delta(\sg-1)^2F\left(\mbf{\rho};\sg\right)\left(\prod_{0 \leq j \leq n}  \frac{2\pi}{k_j}\right)^{\frac{1}{2}}e^{-\frac{1}{2}T^2D}.
\end{equation*}
Now, as in Lemma \ref{LemInt}, $\delta \ll \log^{\e}\left(\frac{1}{\sg-1}\right) \left(\sum_{0 \leq j \leq n} \rho_j\beta_j\right)^{-\frac{1}{2}}$. As $\log^{\e}\left(\frac{1}{\sg-1}\right)(\sg-1) = o(1)$ and $e^{-\frac{1}{2}T^2D} \ll E_{j_0}\left(e^{\frac{1}{\sg-1}}\right)^{-\frac{1}{2}}$, as in Lemma \ref{LemInt}, we have $\iota_1 \ll I\left(\sum_{0\leq j \leq n} E_j(x)^{-\frac{1}{2}}\right)$, a fortiori. \\
We next evaluate $\iota_3$.  By Lemma \ref{GenIntEst} applied with $\gamma := \frac{1}{3072e}\left(\sum_{0 \leq j \leq n}\frac{(192e)^2}{\rho_j}\right)^{-1}$ and $u := \frac{1}{\sg-1}$, 
\begin{equation*}
\int_1^L e^{-\gamma \log\left(1+\frac{\tau^2}{(\sg-1)^2}\right)} d\tau \ll (\sg-1)^{\frac{1}{2}\gamma} \ll (\sg-1)^3\left(\prod_{0 \leq j \leq n} \frac{1}{2\pi k_j}\right)^{\frac{1}{2}},
\end{equation*}
this last estimate following from $(\sg-1) \ll k_j^{-\frac{1}{2}}$. Note that 
\begin{equation*}
\left(\sum_{0 \leq j \leq n} \rho_j\beta_j\right)^{-\frac{1}{2}} \gg \left(\sum_{0 \leq j \leq n} k_j\right)^{-\frac{1}{2}} \gg (\sg-1).
\end{equation*}
It follows that
\begin{align*}
\iota_3 &\ll \int_1^L d\tau \int_{[-\pi,\pi]^{n+1}} d\mbf{t} |F(\mbf{z};s)| \ll (\sg-1)^2F(\mbf{\rho};\sg)\left(\prod_{0 \leq j \leq n} \frac{2\pi}{k_j}\right)^{\frac{1}{2}}\left(\sum_{0 \leq j \leq n} \rho_j\beta_j\right)^{-\frac{1}{2}} \\
&\ll I(\sg-1).
\end{align*}
It remains to estimate $\iota_2$.  We observe, by Lemma \ref{EXTRADECAY} that $R(\mbf{t},\tau) \geq \log^2\left(\frac{1}{\sg-1}\right)$ once $|\tau| < 1$ since, for some $l$, $k_l \gg E_l(x)^2 \gg \log^2\left(\frac{1}{\sg-1}\right)$.  Thus, $e^{-\frac{1}{12}R(\mbf{t},\tau)} \ll (\sg-1)^3$. Now, the integral over $\mbf{t}$ is at most $2^{n+1}\prod_{0 \leq j \leq n} \theta_j \leq 2^{n+1}\prod_{0 \leq j \leq n} \frac{E_j\left(e^{\frac{1}{\sg-1}}\right)^{\e}}{k_j^{\frac{1}{2}}}$. Hence, we have
\begin{align*}
\iota_2 &\ll_n (\sg-1)^3F(\mbf{\rho};\sg)\left(\prod_{0 \leq j \leq n} k_j^{-\frac{1}{2}}E_j\left(e^{\frac{1}{\sg-1}}\right)^{\e}\right)\int_{\sg-1}^1  d\tau e^{-\gamma \log\left(1+\frac{\tau^2}{(\sg-1)^2}\right)} \\
&\ll (\sg-1)^3F(\mbf{\rho};\sg)\left(\prod_{0 \leq j \leq n} \frac{2\pi}{k_j}\right)^{\frac{1}{2}} \int_1^{\infty} e^{-\gamma \log\left(1+\tau^2\right)} d\tau \ll I(\sg-1),
\end{align*}
this last line following again because $(\sg-1) \ll \left(\sum_{0 \leq j \leq n} \rho_j\beta_j\right)^{-\frac{1}{2}}$.\\
Collecting all the integral estimates, we note that $J_1 \ll J_2$, so $J_1 + J_2 \ll I\sum_{0 \leq j \leq n} S_j$. The claim now follows from the trivial observation that 
\begin{equation*}
1+O\left(\sum_{0 \leq j \leq n} S_j\right) \leq \prod_{0 \leq j \leq n} \left(1+O(S_j)\right).
\end{equation*}
\end{proof}
\section{Completion of the Proof of Theorem \ref{THM1}}
\begin{proof}[Proof of Theorem \ref{THM1}]
Let $Y := e^{\frac{1}{\sg-1}}$. Recall that $\eta_j = k_j^{-1}\rho_jE_j\left(Y\right)-1$, and set 
\begin{equation*}
C(\mbf{\rho}) := (\sg-1)\log x = \sum_{0\leq j \leq n}\left(1+O\left(E_j(x)^{-1}\right)\right) \rho_j\alpha_j; 
\end{equation*}
note that by Lemma \ref{PARAMS}, $\eta_j \ll \frac{E_j(k_j/E_j(Y))}{E_j(Y)}$ and $\left\|\frac{1}{\mbf{\rho}}\right\|^{-1} \ll C(\mbf{\rho}) \ll \|\mbf{\rho}\|$.  By Stirling's approximation, we have $k_j! = k_j^{k_j+\frac{1}{2}}(2\pi)^{\frac{1}{2}} e^{-k_j}(1+O(k_j^{-1}))$, so that
\begin{align}
&(2\pi)^{-(n+1)}\prod_{0 \leq j \leq n}\rho_j^{-k_j}\left(\frac{2\pi}{k_j}\right)^{\frac{1}{2}} \nonumber\\
&= (2\pi)^{-\frac{n+1}{2}}\exp\left(\sum_{0 \leq j \leq n} \left(-k_j\log\left(\frac{\rho_jE_j\left(Y\right)}{k_j}\right) - \left(k_j+\frac{1}{2}\right)\log k_j + k_j\log E_j\left(Y\right) \right)\right) \nonumber\\
&= (2\pi)^{-\frac{n+1}{2}} \prod_{0 \leq j \leq n}\left(1+O\left(\frac{1}{E_j(x)^{2}}\right)\right) \exp\left(-k_j(\log(1+\eta_j)+1-\log E_j\left(Y\right)) - \log(k_j!)\right) \nonumber\\
&= \prod_{0 \leq j \leq n} \left(1+O\left(\frac{1}{E_j(x)^{2}}\right)\right)\frac{E_j\left(Y\right)^{k_j}}{k_j!}e^{-k_j(1+\log(1+\eta_j))} \label{PRODEXP}.
\end{align}
Since the sets $E_j$ form a partition,
\begin{equation*}
\sum_{0 \leq j \leq n} E_j\left(Y\right)  = \sum_{p \leq Y} \frac{1}{p} = \log\left(\frac{1}{\sg-1}\right) + M + O\left(\sg-1\right),
\end{equation*}
as in \eqref{MERTENS}. Let $\mc{M} := \prod_{0 \leq j \leq n} \rho_j^{-k_j}\left(\frac{2\pi}{k_j}\right)^{\frac{1}{2}}\left(1+O_{\e}\left(S_j\right)\right)$. Combining Lemmas \ref{LemInt} and \ref{LemAppTrunc} with \eqref{PRODEXP} and replacing $\e+\eta$ with a different parameter which we shall also denote by $\e$ (and which we allow to be arbitrarily small), we derive that when $S_j := E_j(Y)^{-\frac{1}{2}+\e}$, 
\begin{align*}
\pi(x;\mbf{E};\mbf{k}) &= x^{\sg}F\left(\mbf{\rho};\sg\right)\left(\frac{(\sg-1)}{(2\pi)^{n+2}}\mc{M}\left(\frac{2\pi}{\sum_{0 \leq j \leq n} \rho_j \beta_j}\right)^{\frac{1}{2}} +O\left(\frac{1}{\log^B x}\right)\right)\\
&= x\left(\prod_{0 \leq j \leq n}\left(1+O\left(E_j\left(Y\right)^{-\frac{1}{2}+\e}\right)\right) \frac{E_j\left(Y\right)^{k_j}}{k_j!}e^{-E_j\left(Y\right)}\right)\mc{F}(\mbf{k};\sg) + O\left(x\frac{F(\mbf{\rho},\sg)}{\log^{B-\frac{1}{2}} x}\right),
\end{align*}
where we have written
\begin{equation*}
\mc{F}(\mbf{k};\sg) := \left(\frac{e^{C(\mbf{\rho}) +M}}{\sqrt{2\pi}}\frac{F(\mbf{\rho},\sg)e^{-\sum_{0 \leq j \leq n} k_j(1+\log(1+\eta_j))}}{\left(\sum_{0 \leq j \leq n} \rho_j \beta_j\right)^{\frac{1}{2}}}\right).
\end{equation*}
We now estimate $\mc{F}(\mbf{k};\sg)$. Define $\mc{R}$ implicitly via $e^{\mc{R}} = F(\mbf{\rho};\sg)e^{-\sum_{0 \leq j \leq n} k_j(1+\log(1+\eta_j))}$. Thus, 
\begin{equation*}
\mc{F}(\mbf{k};\sg) := \left(2\pi \sum_{0 \leq j \leq n} \rho_j\beta_j\right)^{-\frac{1}{2}}\exp\left(\sum_{0 \leq j \leq n}\left(1+O\left(E_j(x)^{-1}\right)\right) \rho_j \alpha_j + \mc{R} + M\right).
\end{equation*}
Observe that, by the definition of $\eta_j$ and the identities
\begin{equation*}
\log\left(1+\frac{\rho_j}{p^{\sg}-1}\right) = -\log\left(1-\frac{\rho_j}{p^{\sg}-1+\rho_j}\right),
\end{equation*}
and $k_j = \rho_j \sum_{p \in E_j} \frac{1}{p^{\sg}-1+\rho_j}$, we have
\begin{align*}
&\mc{R} = \exp\left(-\sum_{0 \leq j \leq n} \left(\sum_{p \in E_j} \log\left(1-\frac{\rho_j}{p^{\sg}-1+\rho_j}\right) - k_j - k_j\log(1+\eta_j)\right)\right) \\
&= \exp\left(\sum_{0 \leq j \leq n} \left(\log\left(1-\frac{\rho_j}{p^{\sg}-1+\rho_j}\right)  - \rho_j\sum_{p \in E_j} \frac{1}{p^{\sg}-1+\rho_j} - k_j\log\left(1+\eta_j\right)\right) \right) \\
&= \exp\left(\sum_{0 \leq j \leq n} \sum_{l \geq 2} \frac{\rho_j^l}{l}\sum_{p \in E_j} \frac{1}{(p^{\sg}-1+\rho_j)^l} - \sum_{0 \leq j \leq n} k_j\log\left(1+\eta_j\right)\right).
\end{align*}
Now, by Lemma \ref{SIMPLE},
\begin{equation*} 
\rho_j^l\sum_{p \in E_j \atop p > \rho_j^{\frac{1}{\sg}}} \frac{1}{(p^{\sg}-1+\rho_j)^l} \ll \frac{1}{l}\frac{\rho_j}{\log \rho_j},
\end{equation*}
from which it follows that
\begin{align*} 
\sum_{l \geq 2} \frac{\rho_j^l}{l}\sum_{p \in E_j \atop p > \rho_j^{\frac{1}{\sg}}} \frac{1}{p^{\sg}-1+\rho_j}  \ll \frac{\rho_j}{\log \rho_j}\left(\sum_{l \geq 2} \frac{1}{l^2}\right) \ll \frac{\rho_j}{\log \rho_j}.
\end{align*}
On the other hand, when $p \leq \rho_j^{\frac{1}{\sg}} - 1$ then $(p^{\sg}-1+\rho_j)^l \leq 2^{l-1}\rho_j^{l-1}$, whence
\begin{equation*}
\sum_{l \geq 2} \frac{\rho_j^l}{l} \sum_{p \in E_j \atop p \leq \rho_j^{\frac{1}{\sg}}-1} \frac{1}{(p^{\sg}-1+\rho_j)^l} \geq \left(\sum_{l \geq 2} \frac{1}{l2^{l-1}}\right)\rho_j\sum_{p \in E_j \atop p \leq \rho_j^{\frac{1}{\sg}}-1} \frac{1}{p^{\sg}-1+\rho_j}.
\end{equation*}
Set $\phi:= \sum_{l \geq 2} \frac{1}{l2^{l-1}} > \frac{1}{4}$. As $\phi\rho_j\sum_{p \in E_j \atop p > \rho_j^{\frac{1}{\sg}}} \frac{1}{p^{\sg}-1+\rho_j} \ll \frac{\rho_j}{\log \rho_j}$ by Lemma \ref{SIMPLE}, it follows that
\begin{align*}
\mc{R} &\geq \exp\left(\phi\sum_{0 \leq j \leq n} \left(\rho_j\sum_{p \in E_j} \frac{1}{p^{\sg}-1+\rho_j} - k_j\log\left(1+\eta_j\right)\right) + O\left(\frac{\rho_j}{\log \rho_j}\right)\right) \\
&= \exp\left(\sum_{0 \leq j \leq n} k_j\left(\phi-\log\left(1+\eta_j\right)\right) + O\left(\frac{\rho_j}{\log \rho_j}\right)\right).
\end{align*}
This completes the proof of Theorem 1.
\end{proof}
\section{Appendix 1: $E_2$ in Theorem \ref{THMAPP}}
We can express the set $E_2$ mentioned in Theorem \ref{THMAPP} as follows.  For each $k > e^{e^{100}}$, let $r(k) := \llf \frac{\log_3 k}{\log 10}\rrf$, and let $a_0(k),\ldots, a_{r(k)}(k) \in \{0,\ldots,9\}$ be such that $a_0(k) \neq 0$ for each $k$. Write $A_k := \sum_{0 \leq j \leq r(k)} a_j(k)10^{k-j}$, and $S_k := [A_k,A_k + 10^{k-r(k)})$. Let $\mc{Q}$ denote the set of primes not congruent to 3 modulo 10. Then $E_2 := \mc{Q}\cap \left(\bigcup_{k \geq 1} S_k\right)$. Note that there are only 4 coprime residue classes modulo 10, and $\mc{Q}$ contains 3 of them. We thus have
\begin{align*}
\sum_{p \leq x \atop p \in E_2} \frac{1}{p} &\sim \sum_{e^{100} \leq k \leq \llf \frac{\log x}{\log 10}\rrf} \sum_{p \in S_k \cap \mc{Q}} \frac{1}{p} \sim \frac{3}{4}\sum_{e^{100} \leq k \leq \llf \frac{\log x}{\log 10}\rrf} \log\left(\frac{\log\left(A_k + 10^{k-r(k)}\right)}{\log A_k}\right) \\
&\sim \frac{3}{4}\sum_{e^{100} \leq k \leq \llf \frac{\log x}{\log 10}\rrf} \log\left(1+\frac{10^{k-r(k)}}{A_k \log A_k}\right) \asymp \sum_{e^{100} \leq k \leq \llf \frac{\log x}{\log 10}\rrf} \log\left(1+\frac{1}{a_0(k)10^{r(k)}\log A_k}\right) \\
&\asymp \sum_{e^{100} \leq k \leq \llf \frac{\log x}{\log 10}\rrf} \frac{1}{10^{r(k)}\log A_k} \asymp \frac{1}{\log 10}\sum_{e^{100} < k \leq \llf \frac{\log x}{\log 10}\rrf} \frac{1}{10^{r(k)}k} \\
&\sim \frac{1}{\log 10} \sum_{e^{100} < k \leq \llf \frac{\log x}{\log 10}\rrf} \frac{1}{k \log_2 k} \sim \int_{e^{100}}^{\frac{\log x}{\log 10}} \frac{dt}{t\log_2 t} \sim \int_{100}^{\log_2 x} \frac{du}{\log u} \sim \frac{\log_2 x}{\log_3 x}.
\end{align*}
Hence, $E_2(x) \asymp \frac{\log_2 x}{\log_3 x}$ for $x$ sufficiently large, as claimed.  
\section{Appendix 2: A Partition such that $\mathbf{H}_1(\sg)$ Fails}
In this section, we will show that there are infinitely many choices of $\sg$ such that no subset of a partition $\{E_1,\ldots,E_m\}$ simultaneously satisfies $E_j\left(e^{\frac{1}{\sg-1}}\right) \gg_m \log\left(\frac{1}{\sg-1}\right)$ and $\sum_{p \in E_j \atop p \leq e^{\frac{1}{\sg-1}}} \frac{\log p}{p^{\sg}} \gg_n \frac{1}{\sg-1}$.  This implies that hypothesis $\mathbf{H}_1(\sg)$ is non-trivial.\\
Let $\sg > 1$ be such that $\sg \ra 1^+$, and set $L := \llf \frac{\log\left(\frac{1}{\sg-1}\right)}{\log 2}\rrf$. For each $0 \leq l \leq L$, define $y_l := 2^{2^{l}}$, and for $0 \leq k \leq 2^{l-1}-1$, let $x_{l,k} := 2^{2^{l-1}+k}$. Thus, $y_{l-1} \leq x_{l,k} < y_l$ for each $0 \leq k \leq 2^{l-1}-1$.  Set $I_l := [y_{l-1},y_l)$ for each $1 \leq l \leq L$, and $J_{l,k} := [x_{l,k},x_{l,k+1})$. \\
Given $0 \leq j \leq n$, define $S_j := \{1 \leq l \leq L : E_j(y_l)-E_j(y_{l-1}) \gg_n 1\}$, and if $l \in S_j$ define $S_{j,l} := \{0 \leq k \leq 2^{l-1}-1 : E_j(x_{l,k+1})-E_j(x_{l,k}) \gg_n 2^{-(l-1)}\}$. Note that $E_j\left(e^{\frac{1}{\sg-1}}\right) \gg_n \log\left(\frac{1}{\sg-1}\right)$ if, and only if, $|S_j| \gg_n L$ by necessity (since $E_j(y_l)-E_j(y_{l-1}) \leq \log 2$ anyway). Moreover, since, for $l$ fixed, the intervals $J_{l,k}$ partition $I_l$, $|S_{j,l}| \gg_n 2^{l-1}$, whenever these sets are defined.  \\
When $l \leq L$, $p^{\sg} \leq e p$ whenever $p \in I_l$. Thus, in what follows, it will suffice to consider sums in terms of $\frac{\log p}{p}$. Note that whenever $k \in S_{j,l}$, 
\begin{equation*}
\sum_{p \in E_j \cap J_{l,k}} \frac{\log p}{p} \geq \left(2^{l-1}+k\right)\log 2 \left(E_j(x_{l,k+1})-E_j(x_{l,k})\right) \gg_n 1. 
\end{equation*}
Hence, 
\begin{equation} 
\sum_{p \in E_j \atop p \leq e^{\frac{1}{\sg-1}}} \frac{\log p}{p} \geq \sum_{l \in S_j} \sum_{k \in S_{j,l}} \sum_{p \in J_{l,k}} \frac{\log p}{p^{\sg}} \gg_n \sum_{l \in S_j} |S_{j,l}| \gg_n \sum_{l \in S_j} 2^{l-1} \label{LOGCONDITION}.
\end{equation}
With these ideas in mind, let $A,B \subset \mb{N}$, where $B := \bigcup_{l \geq 0} [2^{l+1}-l,2^{l+1})$ and $A := \mb{N} \bk B$.
Let $E_1 := \mc{P} \cap \left(\bigcup_{l \in A} I_l\right)$ and $E_2 := \mc{P} \cap \left(\bigcup_{l \in B} I_{l}\right)$. Thus, $\{E_1,E_2\}$ partition $\mc{P}$.  
Now, since $|B \cap [1,L]| \leq \sum_{l \leq \log L} l \ll \log^2 L$, it follows that $|S_1| = (1-o(1))L$, while $|S_2| = o(L)$. By the above remarks, this clearly shows that $E_2\left(e^{\frac{1}{\sg-1}}\right) = o\left(\log\left(\frac{1}{\sg-1}\right)\right)$, while $E_1\left(e^{\frac{1}{\sg-1}}\right) \ll \log\left(\frac{1}{\sg-1}\right)$, for any $\sg$. Now, suppose that $\sg$ is chosen such that $L := 2^M-1$, for some $M \in \mb{N}$, assumed large.  Then the maximal element of $S_1$ is $L-M+1$, while that of $S_2$ is $L$.  Hence, by \eqref{LOGCONDITION}, $\sum_{p \in E_2 \atop p \leq e^{\frac{1}{\sg-1}}} \frac{\log p}{p^{\sg}} \gg 2^L = \frac{1}{\sg-1}$.  On the other hand, setting $L' := 2^M-M+1$,
\begin{equation*}
\sum_{p \in E_1 \atop p \leq e^{\frac{1}{\sg-1}}} \frac{\log p}{p^{\sg}} \leq \sum_{p \leq y_{L'}} \frac{\log p}{p} \ll 2^{2^M-M} \ll L^{-1}2^L = o\left(\frac{1}{\sg-1}\right).
\end{equation*}
Since $M$ is an arbitrary, large integer, this provides a counterexample for infinitely many choices of $\sg$ (and hence $x$).
\subsection*{Acknowledgements}
The author would like to warmly thank his Ph.D thesis supervisor Dr. J. Friedlander for his generosity of time, patience and encouragement throughout the writing of this paper.
\bibliographystyle{acm}
\bibliography{bibManySets2}
\end{document}